\def\ep{\epsilon}
\def\de{\delta}
\def\pn{(P_1,\cdots,P_n)}
\def\m{\mathrm M}
\def\inn{i=1,\cdots,n}
\def\aa{\mathcal{A}}
\def\Ran{\mathrm{Ran}}
\def\Ker{\mathrm{Ker}}
\def\P{\mathbf{P}\!}
\def\PC{\mathbf{PC}}
\def\PI{\mathbf{PI}}
\def\PO{\mathbf{PO}}
\def\Tr{\mathrm{Tr}}
\newtheorem{lemma}{Lemma}[section]
\newtheorem{theorem}[lemma]{Theorem}
\newtheorem{definition}[lemma]{Definition}
\newtheorem{proposition}[lemma]{Proposition}
\newtheorem{corollary}[lemma]{Corollary}
\newtheorem{remark}[lemma]{Remark}
\newtheorem{example}[lemma]{Example}
\begin{document}
\date{}
\title{Completeness of $n$--tuple of projections in $C^*$--algebras}
\author{Shanwen Hu\thanks{E-mail: swhu@math.ecnu.edu.cn}\hspace*{2mm} and\
 Yifeng Xue\thanks{E-mail: yfxue@math.ecnu.edu.cn} \\
 Department of mathematics and Research Center for Operator Algebras\\
 East China Normal University, Shanghai 200241, P.R. China
}

\maketitle
\setlength{\baselineskip}{15.6pt}
\begin{abstract} \noindent
Let $(P_1,\cdots,P_n)$ be an $n$--tuple of projections in a unital $C^*$--algebra $\aa$. We say $\pn$ is complete in
$\aa$ if $\aa$ is the linear direct sum of the closed subspaces $P_1\aa,\cdots,P_n\aa$. In this paper, we give some necessary and sufficient conditions
for the completeness of $\pn$ and discuss the perturbation problem and topology of the set of all complete $n$--tuple of
projections in $\aa$. Some interesting and significant results are obtained in this paper.

\vspace{3mm}
 \noindent{2010 {\it Mathematics Subject
Classification\/}: 46L05, 47B65}

 \noindent{\it Key words}:  projection, idempotent, complete $n$--tuple of projections
\end{abstract}

\setcounter{section}{-1}
\section{Introduction}

Throughout the paper, we always assume that $H$ is a complex Hilbert space with inner product $<\cdot,\cdot>$, $B(H)$ is
the $C^*$--algebra of all bounded linear operators on $H$ and $\aa$ is a $C^*$--algebra with the unit $1$. Let $\aa_+$
denote the set of all positive elements in $\aa$. It is well--known that $\aa$ has a faithful representation
$(\psi ,H_\psi )$ with $\psi (1)=I$ (cf. \cite[Theorem 1.6.17]{HL} or \cite[Theorem 1.5.36]{Xue}). For $T\in B(H)$, let
$\Ran(T)$ (resp. $\Ker(T)$) denote the range (resp. kernel) of $T$.

Let $V_1,V_2$ be closed subspaces in $H$ such that $H=V_1\dotplus V_2=V_1^\perp\dotplus V_2$, that is, $V_1$ and $V_2$ is
in generic position (cf. \cite{Ha}). Let $P_i$ be projection of $H$ onto $V_i$, $i=1,2$. Then
$H=\Ran(P_1)\dotplus\Ran(P_2)=\Ran(I-P_1)\dotplus\Ran(P_2)$. In this case, Halmos gave very useful matrix representations
of $P_1$ and $P_2$ in \cite{Ha}. Following Halmos' work, Sunder investigated in \cite{Su} the $n$--tuple closed subspaces
$(V_1,\cdots,V_n)$ in $H$ which satisfying the condition $H=V_1\dotplus\cdots\dotplus V_n$. If let $P_i$ be the projection
of $H$ onto $V_i$, $i=1,\cdots,n$, then the condition $H=V_1\dotplus\cdots\dotplus V_n$ is equivalent to
$H=\Ran(P_1)\dotplus\cdots\dotplus\Ran(P_n)$.

Now the question yields: when does the relation $H=\Ran(P_1)\dotplus\cdots\dotplus\Ran(P_n)$ hold for an $n$--tuple of
projections $(P_1,\cdots,P_n)$? When $n=2$, Buckholdtz proved in \cite{BU} that $\Ran(P_1)\dotplus \Ran(P_2)=H$ iff
$P_1-P_2$ is invertible in $B(H)$ iff $I-P_1P_2$ is invertible in $B(H)$ and iff $P_1+P_2-P_1P_2$ is invertible in $B(H)$.
More information about two projections can be found in \cite{BS}.
Koliha and Rako\v{c}evi\'{c} generalized Buckholdtz's work to the set of $C^*$--algebras and rings. They gave some
equivalent conditions for decomposition $\mathfrak R=P\,\mathfrak R\dotplus Q\,\mathfrak R$ or
$\mathfrak R=\mathfrak R\,P\dotplus \mathfrak R\,Q$ in \cite {KO} and \cite{KO2} for idempotent elements $P$ and $Q$ in
a unital ring $\mathfrak R$. They also characterized the Fredhomness of the difference of projections on $H$ in \cite{KO3}.
For $n\ge 3$, the question remains unknown so far. But there are some works concerning with this problem. For example,
the estimation of the spectrum of the finite sum of projections on $H$ is given in \cite{BM} and the $C^*$--algebra
generated by certain projections is investigated in \cite{Sh} and \cite{V}, etc..

Let $\P_n(\aa)$ denote the set of $n$--tuple ($n\ge 2$) of non--trivial projections in $\aa$ and put
$$
\PC_n(\aa)=\{(P_1,\cdots,P_n)\in\P_n(\aa)\,\vert\,P_1\aa\dotplus\cdots\dotplus P_n\aa=\aa\}.
$$
It is worth to note that if $\aa=B(H)$ and $(P_1,\cdots,P_n)\in\P_n(B(H))$, then $(P_1,\cdots,P_n)$ $\in\PC_n(B(H))$ if and only if
$\Ran(P_1)\dotplus\cdots\dotplus\Ran(P_n)=H$ (see Theorem \ref{th1} below).

In this paper, we will investigate the set $\PC_n(\aa)$ for $n\ge 3$.
The paper consists of four sections. In Section 1, we give some necessary and sufficient
conditions that make $(P_1,\cdots,P_n)\in\P_n(\aa)$ be in $\PC_n(\aa)$. In Section 2, using some equivalent conditions
for $(P_1,\cdots,P_n)\in\PC_n(\aa)$ obtained in \S 1, we obtain an explicit expression of $P_{i_1}\vee\cdots\vee P_{i_k}$
for $\{i_1,\cdots,i_k\}\subset\{1,\cdots,n\}$. We discuss the perturbation problems for $(P_1,\cdots,P_n)\in\PC_n(\aa)$ in
Section 3. We find an interesting result: if $\pn\in\P_n(\aa)$ with $A=\sum\limits_{i=1}^nP_i$ invertible in $\aa$, then
$\|P_iA^{-1}P_j\|<\big[(n-1)\|A^{-1}\|\|A\|^2\big]^{-1}$, $i\not=j$ implies $P_iA^{-1}P_j=0$, $i\not=j$, $i,j=1,\cdots,n$
in this section. We show in this section that for given $\ep\in (0,1)$, if $\pn\in\P_n(\aa)$ satisfies condition
$\|P_iP_j\|<\ep$, then there exists an $n$--tuple of mutually orthogonal projections $(P'_1,\cdots,P'_n)\in\P_n(\aa)$ such
that $\|P_i-P'_i\|<2(n-1)\ep$, $\inn,$ which improves a conventional  estimate: $\|P_i-P'_i\|<(12)^{n-1}n!\ep$,
$i=1,\cdots,n$ (cf. \cite{HL}). In the final section, we will study the topological properties and equivalent relations
on $\PC_n(\aa)$.

\section{Equivalent conditions for complete $n$--tuples of projections in $C^*$--algebras}

Let $GL(\aa)$ (resp. $U(\aa)$) denote the group of all invertible (resp. unitary) elements in $\aa$. Let $\m_k(\aa)$
denote matrix algebra of all $k\times k$ matrices over $\aa$. For any $a\in\aa$, we set $a\,\aa=\{ax\vert\,x\in\aa\}
\subset\aa$.

\begin{definition}\label{1da}
An $n$--tuple of projections $(P_1,\cdots,P_n)$ in $\aa$ is called complete in $\aa$, if
$(P_1,\cdots,P_n)\in\PC_n(\aa)$.
\end{definition}

\begin{theorem}\label{th1}
Let $(P_1,\cdots,P_n)\in\P_n(\aa)$. Then the following statements are equivalent:
\begin{enumerate}
\item[$(1)$] $(P_1,\cdots,P_n)$ is complete in $\aa$.
\item[$(2)$] $H_\psi =\Ran(\psi (P_1))\dotplus\cdots\dotplus\Ran(\psi (P_n))$ for any faithful representation $(\psi , H_\psi )$
of $\aa$ with $\psi (1)=I$.
\item[$(3)$] $H_\psi =\Ran(\psi (P_1))\dotplus\cdots\dotplus\Ran(\psi (P_n))$ for some faithful representation $(\psi , H_\psi )$
of $\aa$ with $\psi (1)=I$.
\item[$(4)$] $\sum\limits_{j\not=i}P_j+\lambda P_i\in GL(\aa)$, $i=1,2,\cdots,n$ and
$\forall\,\lambda\in [1-n,0)$.
\item[$(5)$] $\lambda\big(\sum\limits_{j\not=i}P_j\big)+ P_i\in GL(\aa)$ for $1\le i\le n$ and all
$\lambda\in\mathbb C\backslash\{0\}$.
\item[$(6)$] $A=\sum\limits_{i=1}^nP_i\in GL(\aa)$ and $P_iA^{-1}P_i=P_i$, $i=1,\cdots,n$.
\item[$(7)$] $A=\sum\limits_{i=1}^nP_i\in GL(\aa)$ and $P_iA^{-1}P_j=0$, $i\not=j$, $i,j=1,\cdots,n$.
\item[$(8)$] there is an $n$-tuple of idempotent operators ${(E_1,\cdots,E_n)}$ in $\aa$ such that
$E_iP_i=E_i,\,P_iE_i=P_i$, $i=1,\cdots,n$ and $E_iE_j=0,\ i\not=j,\ i,j=1,\cdots,n,\ \sum\limits_{i=1}^nE_i=1.$
\end{enumerate}
\end{theorem}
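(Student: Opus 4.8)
My strategy is to prove the cyclic chain of implications $(1)\Rightarrow(2)\Rightarrow(3)\Rightarrow(1)$ for the representation-theoretic statements, a separate cycle among the algebraic conditions $(4)$--$(8)$, and then a single bridge connecting the two groups. For the bridge I would show $(1)\Leftrightarrow(6)$ directly, since $(6)$ is the most ``computational'' reformulation: given $P_1\aa\dotplus\cdots\dotplus P_n\aa=\aa$, I would look at the map $x\mapsto(P_1x_1,\dots,P_nx_n)$ from a direct sum and extract $A=\sum P_i$ as the relevant operator, and conversely show invertibility of $A$ together with $P_iA^{-1}P_i=P_i$ forces the direct-sum decomposition by exhibiting explicit projections $E_i=P_iA^{-1}$ onto $P_i\aa$ along the complement.

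For $(1)\Leftrightarrow(2)\Leftrightarrow(3)$: the implication $(2)\Rightarrow(3)$ is trivial since a faithful representation with $\psi(1)=I$ exists (cited in the Introduction). For $(3)\Rightarrow(1)$ and $(1)\Rightarrow(2)$ the key point is that a (unital) $*$-homomorphism, and in particular a faithful representation $\psi$, is isometric and maps $P_i\aa$ onto a dense—in fact closed, since $P_i\aa=\Ran$ of left multiplication by the projection $P_i$ which has closed range—subspace; so the direct-sum decomposition of $\aa$ transports to $\psi(\aa)\subset B(H_\psi)$. Then one must pass from $\psi(\aa)=\psi(P_1)\psi(\aa)\dotplus\cdots$ inside $B(H_\psi)$ to the statement about ranges in $H_\psi$ itself, i.e. $H_\psi=\Ran(\psi(P_1))\dotplus\cdots\dotplus\Ran(\psi(P_n))$. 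This last step is where I expect to invoke that $\psi(\aa)$ acts nondegenerately (because $\psi(1)=I$) so that $\Ran(\psi(P_i))=\overline{\psi(P_i)\psi(\aa)H_\psi}$ and the algebraic direct sum in $\psi(\aa)$ promotes to one in $H_\psi$; one should be careful that the sum of ranges being direct in $B(H_\psi)$-module sense is genuinely equivalent to directness as subspaces of $H_\psi$, which uses that $1\in\aa$.

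For the algebraic cycle I would run $(6)\Leftrightarrow(7)$, $(6)\Rightarrow(8)$, $(8)\Rightarrow(6)$, and fold in $(4)$ and $(5)$. The equivalence $(6)\Leftrightarrow(7)$ is a short computation: assuming $A\in GL(\aa)$, sandwich the identity $\sum_j P_j=A$ between $P_i$ on one side, giving $P_i=\sum_j P_iA^{-1}P_j$ after multiplying by $A^{-1}$; so $P_iA^{-1}P_i=P_i$ for all $i$ is equivalent to $\sum_{j\ne i}P_iA^{-1}P_j=0$ for all $i$, and a second sandwiching (or a positivity/rank argument using that each $P_iA^{-1}P_j$ term is controlled) upgrades this to each individual term vanishing—here I'd be careful and probably argue via the idempotents $E_i=P_iA^{-1}$, noting $\sum E_i=1$ automatically, $E_iP_i=P_iA^{-1}P_i$, and $E_i$ idempotent iff $P_iA^{-1}P_iA^{-1}=P_iA^{-1}$, which ties directly to $(6)$. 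For $(4)$ and $(5)$: writing $\sum_{j\ne i}P_j+\lambda P_i=A-(1-\lambda)P_i$, invertibility of this pencil for the relevant $\lambda$ is, via $A\in GL(\aa)$, equivalent to invertibility of $1-(1-\lambda)A^{-1}P_i$, i.e. to $(1-\lambda)^{-1}\notin\sigma(A^{-1}P_i)$; the content of $(6)$/$(7)$ is exactly that $A^{-1}P_i$ behaves like a genuine idempotent-type element with spectrum in $\{0,1\}$, which pins down precisely the excluded $\lambda$'s and matches the ranges $[1-n,0)$ in $(4)$ and $\mathbb C\setminus\{0\}$ after rescaling in $(5)$.

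**Main obstacle.** The routine parts are the spectral/pencil manipulations in $(4)$--$(7)$ and the fact that $\psi$ is isometric. The genuinely delicate step is the bridge $(1)\Leftrightarrow(6)$, specifically the forward direction: from the abstract Banach-space direct-sum decomposition $\aa=P_1\aa\dotplus\cdots\dotplus P_n\aa$ I must produce the invertible element $A=\sum P_i$ and the identities $P_iA^{-1}P_i=P_i$. The natural idea is to consider the continuous linear surjection $T\colon P_1\aa\oplus\cdots\oplus P_n\aa\to\aa$, $(y_1,\dots,y_n)\mapsto\sum y_i$, which is a bijection by hypothesis, hence a Banach-space isomorphism by the open mapping theorem; one then has to recognize that the ``diagonal'' restriction of $T^{-1}$ along the copies $P_i\aa\cong P_i\aa$ is implemented by right multiplication by $A^{-1}$, which requires first showing $A\in GL(\aa)$ and is not completely formal. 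I expect to need the observation that $A$ is invertible iff $1\in A\aa$ and $1\in\aa A$ (using $A=A^*\ge0$, so one-sided invertibility suffices), and then to check $1\in A\aa$ by writing $1=\sum P_i y_i$ and manipulating; getting from there cleanly to $P_iA^{-1}P_i=P_i$ is the crux of the whole theorem and is where I would spend the most care.
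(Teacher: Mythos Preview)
Your local arguments for $(6)\Leftrightarrow(7)$, $(7)\Rightarrow(8)$ via $E_i=P_iA^{-1}$, and $(8)\Rightarrow(1)$ are essentially the paper's, and you correctly flag $(1)\Rightarrow A\in GL(\aa)$ as the crux. But two steps in your plan have genuine gaps.

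\textbf{$(3)\Rightarrow(1)$ cannot be done by ``transporting the decomposition''.} The idempotents $F_i$ coming from $H_\psi=\Ran(\psi(P_1))\dotplus\cdots\dotplus\Ran(\psi(P_n))$ lie in $B(H_\psi)$, not a priori in $\psi(\aa)$, so there is no direct way to pull them back to idempotents in $\aa$ and conclude $\aa=P_1\aa\dotplus\cdots\dotplus P_n\aa$. The paper's fix is to use the $F_i$ only to exhibit an explicit inverse $\sum_j\lambda_j^{-1}F_j^*F_j$ for $\sum_j\lambda_j\psi(P_j)$ inside $B(H_\psi)$; spectral permanence then gives invertibility already in $\psi(\aa)$, hence in $\aa$. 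This is why the paper proves $(3)\Rightarrow(4)$ rather than $(3)\Rightarrow(1)$.

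\textbf{Your handling of $(4)$ presupposes $A\in GL(\aa)$.} The rewrite $\sum_{j\ne i}P_j+\lambda P_i=A-(1-\lambda)P_i$ and the reduction to $\sigma(A^{-1}P_i)$ need $A$ invertible, but $(4)$ only asserts invertibility for $\lambda\in[1-n,0)$, and $\lambda=1$ is excluded. The paper extracts $A\in GL(\aa)$ from $(4)$ via the inequality $\big(\sum_{j\ne i}P_j+\lambda P_i\big)^2\le 2\max\{n-1,\lambda^2\}\,A$, which forces $A$ bounded below once the left side is invertible. A second subtlety: even with $A\in GL(\aa)$, $\sigma(A^{-1}P_i)\subset\{0,1\}$ does \emph{not} by itself make $A^{-1}P_i$ idempotent, since $A^{-1}P_i$ is not normal; you must pass to the self-adjoint $P_iA^{-1}P_i$ (same nonzero spectrum) before concluding it is a projection, and then argue it equals $P_i$. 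The paper sidesteps this by isolating a lemma on positive $B,C$ with $\lambda B+C\in GL(\aa)$ for all real $\lambda\ne 0$, applied to $B=\sum_{j\ne i}P_j$, $C=P_i$.

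\textbf{On your acknowledged obstacle.} The clean way to get $A\in GL(\aa)$ from $(1)$ is an operator inequality rather than chasing one-sided inverses: write $1=\sum_iP_ib_i$, form the row $X=[P_1\ \cdots\ P_n]$ and column $Y=[b_1\ \cdots\ b_n]^t$ in $\m_n(\aa)$; then $1=XY$ gives $1=XYY^*X^*\le\|Y\|^2\,XX^*$, and the $(1,1)$ entry reads $1\le\|Y\|^2A$. Once $A\in GL(\aa)$, your uniqueness argument for $P_iA^{-1}P_i=P_i$ from $P_i=\sum_jP_jA^{-1}P_i$ is exactly the paper's.
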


 In order to show Theorem \ref{th1}, we need following lemmas.

\begin{lemma}\label{1L1}
Let $B,\,C\in\aa_+\backslash\{0\}$ and suppose that $\lambda B+C$ is invertible in $\aa$ for every
$\lambda\in \mathbb R\backslash\{0\}$. Then there is a non--trivial orthogonal projection $P\in\aa$ such that
$$
B=(B+C)^{1/2}P(B+C)^{1/2},\quad C=(B+C)^{1/2}(1-P)(B+C)^{1/2}.
$$
\end{lemma}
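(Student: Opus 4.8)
The plan is to conjugate everything by the inverse square root of $A:=B+C$ and to identify the resulting positive element as the desired projection via a spectral argument. Taking $\lambda=1$ in the hypothesis gives $A=B+C\in GL(\aa)$, so $A^{1/2}$ and $A^{-1/2}$ belong to $\aa$ through continuous functional calculus. Put $X:=A^{-1/2}BA^{-1/2}\in\aa_+$; since $0\le B\le A$ we get $0\le X\le 1$, hence $\sigma(X)\subseteq[0,1]$, and moreover $A^{-1/2}CA^{-1/2}=1-X$.

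The next step is to rewrite $\lambda B+C=A+(\lambda-1)B$ and conjugate: $A^{-1/2}(\lambda B+C)A^{-1/2}=1+(\lambda-1)X$. Since $A^{\pm1/2}\in GL(\aa)$, the element $\lambda B+C$ is invertible in $\aa$ if and only if $1+(\lambda-1)X$ is; as $X=X^*$, the spectral mapping theorem shows this holds exactly when $1+(\lambda-1)t\ne 0$ for every $t\in\sigma(X)$, i.e.\ (for $\lambda\ne 1$; the case $\lambda=1$ imposes no condition) when $(1-\lambda)^{-1}\notin\sigma(X)$. The key observation is that the M\"obius map $\lambda\mapsto(1-\lambda)^{-1}$ carries $(-\infty,0)$ onto $(0,1)$, $(0,1)$ onto $(1,\infty)$ and $(1,\infty)$ onto $(-\infty,0)$, so it maps $\mathbb R\setminus\{0,1\}$ onto $\mathbb R\setminus\{0,1\}$. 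Thus the hypothesis ``$\lambda B+C\in GL(\aa)$ for all $\lambda\in\mathbb R\setminus\{0\}$'' forces $\sigma(X)\cap(\mathbb R\setminus\{0,1\})=\emptyset$; combined with $X=X^*$ this yields $\sigma(X)\subseteq\{0,1\}$, so $X$ is an orthogonal projection.

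To finish, note $X\ne 0$ because $B=A^{1/2}XA^{1/2}\ne 0$, and $X\ne 1$ because $X=1$ would give $B=A$ and hence $C=0$, contradicting $C\ne 0$. Hence $P:=X$ is a non-trivial orthogonal projection in $\aa$, and $B=A^{1/2}PA^{1/2}=(B+C)^{1/2}P(B+C)^{1/2}$, $C=A^{1/2}(1-P)A^{1/2}=(B+C)^{1/2}(1-P)(B+C)^{1/2}$. The functional-calculus manipulations are routine; the one point deserving care is the bookkeeping for the image of $\lambda\mapsto(1-\lambda)^{-1}$, ensuring that letting $\lambda$ run over $\mathbb R\setminus\{0\}$ really excludes every point of $(0,1)$ from $\sigma(X)$, which is exactly what makes $X$ a projection.
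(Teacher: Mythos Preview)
Your proof is correct and follows essentially the same approach as the paper's: conjugate by $(B+C)^{-1/2}$ to reduce to $1+(\lambda-1)X$ with $0\le X\le 1$, then use the M\"obius map to force $\sigma(X)\subseteq\{0,1\}$. The only cosmetic difference is that the paper carries out the spectral argument with $C_1=1-X$ and the map $\lambda\mapsto\lambda/(1-\lambda)$ rather than with $X$ and $\lambda\mapsto(1-\lambda)^{-1}$.
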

\begin{proof}Put $D=B+C$ and $D_\lambda=\lambda B+C$, $\forall\,\lambda\in R\backslash\{0\}$. Then $D\ge 0$, $D$ and
$D_\lambda$ are all invertible in $\aa$, $\forall\,\lambda\in\mathbb R\backslash\{0\}$.

Put $B_1=D^{-1/2}BD^{-1/2}$, $C_1=D^{-1/2}CD^{-1/2}$. Then $B_1+C_1=1$ and
$$
D^{-1/2}D_\lambda D^{-1/2}=\lambda B_1+C_1=\lambda +(1-\lambda)C_1=(1-\lambda)(\lambda(1-\lambda)^{-1}+C_1)
$$
is invertible in $\aa$ for any $\lambda\in\mathbb R\backslash\{0,1\}$. Since $\lambda\mapsto\dfrac{\lambda}{1-\lambda}$ is
a homeomorphism from $\mathbb R\backslash\{0,1\}$ onto $\mathbb R\backslash\{-1,0\}$, it follows that $\sigma(C_1)
\subset\{0,1\}$. Note that $B_1$ and $C_1$ are all non--zero. So $\sigma(C_1)=\{0,1\}=\sigma(B_1)$ and hence
$P=B_1$ is a non--zero projection in $\aa$ and $B=D^{1/2}PD^{1/2}$, $C=D^{1/2}(1-P)D^{1/2}$.
\end{proof}

\begin{lemma}\label{1L2}
Let $B,\,C\in\aa_+\backslash\{0\}$.  Then the following statements are equivalent:
\begin{enumerate}
\item[$(1)$] for any non--zero real number $\lambda$, $\lambda B+C$ is invertible in $\aa$.
\item[$(2)$] $B+C$ is invertible in $\aa$ and $B(B+C)^{-1}B=B$.
\item[$(3)$] $B+C$ is invertible in $\aa$ and $B(B+C)^{-1}C=0$.
\item[$(4)$] $B+C$ is invertible in $\aa$ and for any $B',\,C'\in\aa_+$ with $B'\le B$ and $C'\le C$,
$B'(B+C)^{-1}C'=0$.
\end{enumerate}
\end{lemma}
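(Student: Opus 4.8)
The plan is to prove the cycle of implications $(1)\Rightarrow(2)\Rightarrow(3)\Rightarrow(4)\Rightarrow(1)$, using Lemma~\ref{1L1} as the bridge from $(1)$ to the algebraic conditions. First, assume $(1)$. Taking $\lambda=1$ shows $B+C\in GL(\aa)$, and Lemma~\ref{1L1} produces a non-trivial projection $P\in\aa$ with $B=D^{1/2}PD^{1/2}$ and $C=D^{1/2}(1-P)D^{1/2}$, where $D=B+C$. Then $B(B+C)^{-1}B=D^{1/2}PD^{1/2}D^{-1}D^{1/2}PD^{1/2}=D^{1/2}P^2D^{1/2}=D^{1/2}PD^{1/2}=B$, which is $(2)$.

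Next, $(2)\Rightarrow(3)$: since $B+C$ is invertible, $B(B+C)^{-1}C=B(B+C)^{-1}(B+C-B)=B-B(B+C)^{-1}B=B-B=0$. The implication $(3)\Rightarrow(4)$ is the step I expect to require the most care, since it must pass from a single identity about $B$ and $C$ to a statement about all positive $B'\le B$, $C'\le C$. The idea is: from $(3)$, $B(B+C)^{-1}C=0$; writing $Q=(B+C)^{-1/2}B(B+C)^{-1/2}$ we get, as in the proof of Lemma~\ref{1L1}, that $Q$ is a projection with $(B+C)^{-1/2}C(B+C)^{-1/2}=1-Q$. Now if $0\le B'\le B$, then $0\le (B+C)^{-1/2}B'(B+C)^{-1/2}\le Q$, so this element lies in the corner $Q\aa Q$; similarly $(B+C)^{-1/2}C'(B+C)^{-1/2}\le 1-Q$ lies in $(1-Q)\aa(1-Q)$. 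Since $Q(1-Q)=0$, multiplying gives $(B+C)^{-1/2}B'(B+C)^{-1}C'(B+C)^{-1/2}=0$, and conjugating back by $(B+C)^{1/2}$ yields $B'(B+C)^{-1}C'=0$, which is $(4)$. Here the key fact is that $0\le X\le Q$ with $Q$ a projection forces $X=QXQ$, which follows from $\|(1-Q)X^{1/2}\|^2=\|(1-Q)X(1-Q)\|\le\|(1-Q)Q(1-Q)\|=0$.

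Finally, $(4)\Rightarrow(1)$: specializing $(4)$ to $B'=B$, $C'=C$ recovers $(3)$, hence $(2)$, hence the projection decomposition $B=D^{1/2}PD^{1/2}$, $C=D^{1/2}(1-P)D^{1/2}$ with $D=B+C\in GL(\aa)$. Then for $\lambda\ne 0$, $\lambda B+C=D^{1/2}(\lambda P+(1-P))D^{1/2}$, and $\lambda P+(1-P)$ is invertible with inverse $\lambda^{-1}P+(1-P)$; since $D^{1/2}$ is invertible, $\lambda B+C\in GL(\aa)$, giving $(1)$. The only genuine subtlety is the corner argument in $(3)\Rightarrow(4)$; everything else is routine manipulation of the functional-calculus identity $P^2=P$ together with invertibility of $B+C$.
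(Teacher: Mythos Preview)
Your proof is correct and follows the same route as the paper for $(1)\Rightarrow(2)$ and $(2)\Leftrightarrow(3)$, but diverges in two places worth noting.

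For $(3)\Rightarrow(4)$ the paper does not introduce the projection $Q$; it argues directly by sandwiching: from $B(B+C)^{-1}C(B+C)^{-1}B=0$ and $0\le C'\le C$ one gets $0\le B(B+C)^{-1}C'(B+C)^{-1}B\le 0$, hence $B(B+C)^{-1}C'=0$, and then repeats with $B'\le B$. Your corner argument via $Q=(B+C)^{-1/2}B(B+C)^{-1/2}$ is equally valid and more structural; it makes explicit that the whole lemma is really the statement that $Q$ is a projection. For closing the cycle, the paper proves $(2)\Rightarrow(1)$ by setting $X=(B+C)^{-1/2}B$, $Y=(B+C)^{-1/2}C$ and computing $(X+\lambda Y)^*(X+\lambda Y)=B+\lambda^2 C$, then bounding $(B+\lambda C)^2$ below. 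Your $(4)\Rightarrow(1)$ instead reconstructs the factorization $\lambda B+C=D^{1/2}(\lambda P+(1-P))D^{1/2}$ and inverts it explicitly. Your route is shorter and more transparent (and in fact gives invertibility for all nonzero complex $\lambda$, not just real ones); the paper's computation avoids re-deriving that $Q$ is a projection but is otherwise more laborious.
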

\begin{proof} (1)$\Rightarrow$(2) By Lemma \ref{1L1}, there is a non--zero projection $P$ in $\aa$ such that
$B=D^{1/2}PD^{1/2}$, $C=D^{1/2}(1-P)D^{1/2}$, where $D=B+C\in GL(\aa)$. So
$$
B(B+C)^{-1}B=D^{1/2}PD^{1/2}D^{-1}D^{1/2}PD^{1/2}=B.
$$

The assertion (2) $\Leftrightarrow$ (3) follows from
$$
B(B+C)^{-1}B=B(B+C)^{-1}(B+C-C)=B-B(B+C)^{-1}C.
$$

(3) $\Rightarrow$ (4) For any $C'$ with $0\le C'\le C$,
$$
0\le B(B+C)^{-1}C'(B+C)^{-1}B\le B(B+C)^{-1}C(B+C)^{-1}B=0,
$$
we have $B(B+C)^{-1}C'=B(B+C)^{-1}C'^{1/2}C'^{1/2}=0.$ This implies $C'(B+C)^{-1}B=0$. In the same way, we get that for
any $B'$ with $0\le B'\le B$, $C'(B+C)^{-1}B'=0$.

(4)$\Rightarrow$(3) is obvious.

(2)$\Rightarrow$(1) Set $X=(B+C)^{-1/2}B$ and $Y=(B+C)^{-1/2}C$. Then $X,\,Y\in\aa$ and $X^*X=B$, $X+Y=(B+C)^{1/2}$.
Thus, for any $\lambda\in\mathbb R\backslash\{0\}$,
\begin{align*}
X+\lambda Y&=(B+C)^{-1/2}(B+\lambda C)\\
(X+\lambda Y)^*(X+\lambda Y)&=((1-\lambda)X+\lambda(B+C)^{1/2})^*((1-\lambda)X+\lambda(B+C)^{1/2})\\
&=(1-\lambda)^2 B+2\lambda(1-\lambda)B+\lambda^2(B+C)\\
&=B+\lambda^2 C
\end{align*}
and consequently, $(X+\lambda Y)^*(X+\lambda Y)\ge B+C$ if $|\lambda|>1$ and
$(X+\lambda Y)^*(X+\lambda Y)\ge\lambda^2(B+C)$ when $|\lambda|<1$. This indicates that $(X+\lambda Y)^*(X+\lambda Y)$
is invertible in $\aa$. Noting that $B+C\ge\|(B+C)^{-1}\|^{-1}\cdot 1$, we have, for any $\lambda\in\mathbb R\backslash\{0\}$,
\begin{align*}
(B+\lambda C)^2&=(X+\lambda Y)^*(B+C)(X+\lambda Y)\\
&\ge\|(B+C)^{-1}\|^{-1}(X+\lambda Y)^*(X+\lambda Y).
\end{align*}
Therefore, $B+\lambda C$ is invertible in $\aa$, $\forall\,\lambda\in\mathbb R\backslash\{0\}$.
\end{proof}

Now we begin to prove Theorem \ref{th1}.

(1)$\Rightarrow$(6) Statement (1) implies that there are $b_1,\cdots,b_n\in\aa$ such that $1=\sum\limits^n_{i=1}P_ib_i$.
Put $\hat I=\begin{bmatrix}1\\ \ &0\\ \ &\ &\ddots\\ \ &\ &\ \ &0\end{bmatrix}$,
$X=\begin{bmatrix}P_1&\cdots& P_n\\ 0&\cdots &0\\ \vdots &\ddots&\vdots\\ 0&\cdots &0\end{bmatrix}$ and
$Y=\begin{bmatrix}b_1&0&\cdots&0\\ \vdots&\vdots&\ddots&\vdots\\ b_n&0&\cdots&0\end{bmatrix}$. Then
$$
\hat I=XY=XYY^*X^*\le\|Y\|^2XX^*=\|Y\|^2\begin{bmatrix}\sum\limits^n_{i=1}P_i\\ \ &0\\
\ &\ &\ddots\\ \ &\ &\ \ &0\end{bmatrix}
$$
and so that $A=\sum\limits^n_{i=1}P_i$ is invertible in $\aa$. Therefore, from $\aa=P_1\aa\dotplus\cdots\dotplus P_n\aa$
and
$$
P_i=P_1A^{-1}P_i+\cdots+P_iA^{-1}P_i+\cdots+P_nA^{-1}P_i=\underbrace{0+\cdots+0}_{i-1}+P_i+\underbrace{0+\cdots+0}_{n-i},
$$
$i=1,\cdots,n$, we get that $P_i=P_iA^{-1}P_i$, $i=1,\cdots,n$.

(2)$\Rightarrow$(3) is obvious.

(3)$\Rightarrow$(4) Set $Q_i=\psi (P_i)$, $i=1,\cdots,n$. From $H_\psi =\Ran(Q_1)\dotplus\cdots\dotplus \Ran(Q_n)$, we obtain
idempotent operators $F_1,\cdots,F_n$ in $B(H_\psi )$ such that $\sum\limits^n_{i=1}F_i=I$, $F_iF_j=0$, $i\not=j$ and
$F_iH_\psi =Q_iH_\psi $, $i,j=1,\cdots,n$. So $F_iQ_i=Q_i$, $Q_iF_i=F_i$ and
$F_jQ_i=0$, $i\not=j$, $1\le i,j\le n$. Using these relations, it is easy to check that
\begin{align*}
\big(\sum_{i=1}^n\lambda_iQ_i\big)\big(\sum_{i=1}^n\lambda_i^{-1}F_i^*F_i\big)&=\sum_{i=1}^nF_i=I,\\
\big(\sum_{i=1}^n\lambda_i^{-1}F_i^*F_i\big)\big(\sum_{i=1}^n\lambda_iQ_i\big)&=\sum_{i=1}^nF_i^*=I,
\end{align*}
for any non--zero complex number $\lambda_i$, $i=1,\cdots,n$. Particularly, for any $\lambda\in [1-n,0)$,
$$
\big(\lambda\big(\sum\limits_{j\not=i}Q_j\big)+ Q_i\big)^{-1}=\lambda^{-1}\sum_{j\not=i}F_j^*F_j+F_i^*F_i
$$
in $B(H_\psi )$. Thus, $\lambda\big(\sum\limits_{j\not=i}Q_j\big)+Q_i$ is invertible $\psi (\aa)$, $1\le i\le n$ by
\cite[Corollary 1.5.8]{Xue} and so that $\lambda\big(\sum\limits_{j\not=i}P_j\big)+P_i\in GL(\aa)$ since $\psi $ is faithful
and $\psi (1)=I$.

(4)$\Rightarrow$(5) Put $A_i(\lambda)=\sum\limits_{j\not=i}P_j+\lambda P_i$, $i=1,\cdots,n$,
$\lambda\in\mathbb R\backslash\{0\}$, then
\begin{align*}
(A_i(\lambda))^2&\le 2\big(\sum\limits_{j\not=i}P_j\big)^2+2\lambda^2 P_i\le 2(n-1)\sum\limits_{j\not=i}P_j
+2\lambda^2 P_i\\
&\le 2\max\{n-1,\lambda^2\}(P_1+\cdots+P_n).
\end{align*}
So $A_i(\lambda)$ is invertible in $\aa$, $\forall\,\lambda\in [1-n,0)$ means that
$A=P_1+\cdots+P_n$ is invertible in $\aa$. Consequently, $A_i(\lambda)$ is invertible in $\aa$ when $\lambda>0$,
$\forall\,1\le i\le n$.

Now we show that $A_i(\lambda)$ is invertible in $\aa$ for $i=1,\cdots,n$ and $\lambda <1-n$. Put
$$
A_{1i}=P_iAP_i,\ A_{2i}=P_iA(1-P_i),\ A_{4i}=(1-P_i)A(1-P_i),\ i=1,\cdots,n.
$$
Express $A_i(\lambda)$ as the form $A_i(\lambda)=\begin{bmatrix}A_{1i}+(\lambda-1)P_i&A_{2i}\\ A_{2i}^*&A_{4i}
\end{bmatrix}$, $i=1,\cdots,n$. Noting that $A_{4i}$ is invertible in $(1-P_i)\aa(1-P_i)$ ($A\ge\|A^{-1}\|^{-1}\cdot 1$,
$A_{4i}\ge\|A^{-1}\|^{-1}(1-P_i)$) and
$$
A_i(\lambda)\begin{bmatrix}P_i&0\\ -A_{4i}^{-1}A_{2i}^*&1-P_i\end{bmatrix}
=\begin{bmatrix}A_{1i}-A_{2i}A_{4i}^{-1}A_{2i}^*+(\lambda-1)P_i&A_{2i}\\ 0&A_{4i}\end{bmatrix},
$$
we get that $A_i(\lambda)$ is invertible iff $A_{1i}-A_{2i}A_{4i}^{-1}A_{2i}^*+(\lambda-1)P_i$
is invertible in $P_i\aa P_i$, $i=1,\cdots,n$. Since $A_{1i}\le nP_i$, it follows that
$$
-A_{1i}+A_{2i}A_{4i}^{-1}A_{2i}^*-(\lambda-1)P_i\ge (1-n-\lambda)P_i+A_{2i}A_{4i}^{-1}A_{2i}^*\ge(1-n-\lambda)P_i
$$
when $\lambda<1-n$, $i=1,\cdots,n$. Therefore, $A_i(\lambda)$ is invertible in $\aa$ for $\lambda<1-n$ and $i=1,\cdots,n$.

Applying Lemma \ref{1L2} to $\sum\limits_{j\not=i}P_j$ and $P_i$, $1\le i\le n$, we can get the implications
(5)$\Rightarrow$(6) and (6)$\Rightarrow$ (7) easily.

(7)$\Rightarrow$(8) Set $E_i=P_iA^{-1}$, $i=1\cdots,n$. Then $E_i$ is an idempotent elements in $\aa$
and $E_iE_j=0$, $i\not=j$, $i,j=1,\cdots,n$. It is obvious that $\sum\limits_{i=1}^n E_i=1$ and $P_iE_i=E_i$, $E_iP_i=P_i$,
$i=1,\cdots,n$.

(8)$\Rightarrow$(1) Let $E_1,\cdots,E_n$ be idempotent elements in $\aa$ such that $E_iE_j=\delta_{ij}E_i$,
$\sum\limits^n_{i=1}E_i=1$ and $E_iP_i=P_i$, $P_iE_i=E_i$, $i,j=1,\cdots,n$. Then $E_i\aa=P_i\aa$, $i=1,\cdots,n$
and $\aa=E_1\aa\dotplus\cdots E_n\aa=P_1\aa\dotplus\cdots\dotplus P_n\aa$.

(8)$\Rightarrow$(2) Let $E_1,\cdots,E_n$ be idempotent elements in $\aa$ such that $E_iE_j=\delta_{ij}E_i$,
$\sum\limits^n_{i=1}E_i=1$ and $E_iP_i=P_i$, $P_iE_i=E_i$, $i,j=1,\cdots,n$. Let $(\psi ,H_\psi )$ be any faithful
representation of $\aa$ with $\psi (1)=I$. Put $E_i'=\psi (E_i)$ and $Q_i=\psi (P_i)$, $i=1,\cdots,n$. Then
$E_i'E_j'=\delta_{ij}E_i'$, $\sum\limits^n_{i=1}E_i'=I$ and $\Ran(E_i')=\Ran(Q_i)$, $i,j=1,\cdots,n$. Consequently,
$H_\psi =\Ran(Q_1)\dotplus\cdots\dotplus\Ran(Q_n)$. \qed

\begin{remark}\label{rem1a}
{\rm
(1) Statement  (3) in Theorem  \ref{th1} can not be replaced by
``for any $1\le i\le n$, $P_i-\sum\limits_{j\not=i}P_j$ is invertible".

For example, let $H^{(4)}=\bigoplus\limits^4_{i=1}H$ and put $\aa=B(H^{(4)})$,
$$
P_1=\begin{bmatrix}I\\ \ & I\\ \ &\ &0\\ \ &\ &\ & 0\end{bmatrix},\quad
P_2=\begin{bmatrix}I\\ \ & 0\\ \ &\ &I\\ \ &\ &\ &0\end{bmatrix},\quad
P_3=\begin{bmatrix}I\\ \ &0\\ \ &\ &0\\ \ &\ &\ & I\end{bmatrix}.
$$
Clearly, $P_i-\sum\limits_{j\not=i}{P_j}$ is invertible, $1\le i\le 3$, but $P_2+P_3-2P_1$ is not
invertible, that is, $(P_1,P_2,P_3)$ is not complete in $\aa$.

(2) According to the proof of (3)$\Rightarrow$(4) of Theorem \ref{th1}, we see that for $(P_1,\cdots,P_n)$ $\in\P_n(\aa)$,
if $\sum\limits^n_{i=1}P_i\in GL(\aa)$, then $\sum\limits_{i\not=j}P_i-\lambda P_i\in GL(\aa)$, $\forall\,1\le i\le n$
and $\lambda>n-1$.
}
\end{remark}
\begin{corollary}[{\cite[Theorem 1]{BU}}]
Let $P_1,P_2$ be non--trivial projections in $B(H)$. Then $H=\Ran(P_1)\dotplus\Ran(P_2)$ iff  $P_1-P_2$ is invertible in
$B(H)$.
\end{corollary}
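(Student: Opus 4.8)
The plan is to read this corollary off Theorem~\ref{th1}, specialized to $n=2$ and $\aa=B(H)$, with $\psi$ the identity representation $\mathrm{id}\colon B(H)\to B(H)$; this is faithful and $\mathrm{id}(1)=I$. For this $\psi$ and $n=2$, statement $(3)$ of Theorem~\ref{th1} is exactly the assertion $H=\Ran(P_1)\dotplus\Ran(P_2)$. Hence, for the direction ``$H=\Ran(P_1)\dotplus\Ran(P_2)\Rightarrow P_1-P_2$ invertible'', I would simply invoke the equivalence of statements $(3)$ and $(5)$ in Theorem~\ref{th1}: the hypothesis gives $(3)$, hence $(5)$ holds, and taking $i=1$ and $\lambda=-1$ in $(5)$ (legitimate since $-1\in\mathbb C\backslash\{0\}$) yields $-P_2+P_1=P_1-P_2\in GL(B(H))$.

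For the converse I would argue directly in $H$, without the theorem. Suppose $P_1-P_2$ is invertible in $B(H)$. If $x\in\Ran(P_1)\cap\Ran(P_2)$, then $P_1x=x=P_2x$, so $(P_1-P_2)x=0$, forcing $x=0$; thus $\Ran(P_1)\cap\Ran(P_2)=\{0\}$. Next, $(P_1-P_2)x=P_1x-P_2x\in\Ran(P_1)+\Ran(P_2)$ for every $x\in H$, and $P_1-P_2$ is onto, so $\Ran(P_1)+\Ran(P_2)=H$. Since $\Ran(P_1)$ and $\Ran(P_2)$ are closed subspaces whose algebraic sum is $H$ and whose intersection is $\{0\}$, the two complementary idempotents are bounded (open mapping theorem), i.e.\ $H=\Ran(P_1)\dotplus\Ran(P_2)$.

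I expect no genuine obstacle: the corollary is a specialization of Theorem~\ref{th1}, and the only points that need a moment's care are the identification of statement $(3)$ of Theorem~\ref{th1} (for $\psi=\mathrm{id}$, $n=2$) with the range decomposition appearing in the corollary, and the routine remark that an algebraic direct sum of two closed subspaces which exhausts $H$ is automatically topological. If one prefers to stay inside the algebra in the converse, one can instead route through Lemma~\ref{1L2} with $B=P_1$, $C=P_2$: from the identity $(P_1-P_2)^2+(P_1+P_2)^2=2(P_1+P_2)$ one first obtains $P_1+P_2\in GL(B(H))$, then establishes $\lambda P_1+P_2\in GL(B(H))$ for every real $\lambda\ne0$, and $(1)\Rightarrow(2)$ of Lemma~\ref{1L2} then produces statement $(6)$ of Theorem~\ref{th1}; this takes somewhat more effort than the Hilbert-space argument, which I would make the primary route.
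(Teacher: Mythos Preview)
Your forward direction is exactly the paper's: both invoke Theorem~\ref{th1} (the paper writes it as ``By Theorem~\ref{th1}, $H=\Ran(P_1)\dotplus\Ran(P_2)$ implies $P_1-P_2\in GL(B(H))$'').

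Your converse, however, takes a genuinely different route. The paper stays entirely within the $C^*$--algebraic framework: from $2(P_1+P_2)\ge(P_1-P_2)^2$ it gets $P_1+P_2\in GL(B(H))$, then invokes Remark~\ref{rem1a}(2) to obtain $P_j-\lambda P_i\in GL(B(H))$ for $\lambda>1$, rescales to cover $\lambda\in(0,1]$, and finally applies condition~(4) of Theorem~\ref{th1}. Your primary argument instead works directly in $H$: invertibility of $P_1-P_2$ gives injectivity (killing $\Ran(P_1)\cap\Ran(P_2)$) and surjectivity (forcing $\Ran(P_1)+\Ran(P_2)=H$), with the open mapping theorem tying off the topological direct sum. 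This is shorter and more transparent for the $B(H)$ setting, though it does not exhibit the corollary as a pure specialization of the paper's machinery; the paper's route, by contrast, shows explicitly how the $n=2$ case threads through the equivalences of Theorem~\ref{th1}. The alternative algebraic route you sketch at the end (via the identity $(P_1-P_2)^2+(P_1+P_2)^2=2(P_1+P_2)$ and Lemma~\ref{1L2}) is close in spirit to the paper's, though the paper reaches condition~(4) through Remark~\ref{rem1a}(2) rather than condition~(6) through Lemma~\ref{1L2}.
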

\begin{proof}By Theorem \ref{th1}, $H=\Ran(P_1)\dotplus\Ran(P_2)$ implies that $P_1-P_2\in GL(B(H))$.

Conversely, if $P_1-P_2\in GL(B(H))$, then from $2(P_1+P_2)\ge(P_1-P_2)^2$, we get that $P_1+P_2\in GL(B(H))$ and
so that $P_1-\lambda P_2, P_2-\lambda P_1\in GL(B(H))$, $\forall\,\lambda>1$ by Remark \ref{rem1a} (2). Thus, for any
$\lambda\in (0,1]$, $P_1-\lambda P_2$ and $P_2-\lambda P_1$ are all invertible in $B(H)$. Consequently,
$H=\Ran(P_1)\dotplus\Ran(P_2)$ by Theorem \ref{th1}.
\end{proof}

\section{Some representations concerning the complete $n$--tuple of projections}

We first statement two lemmas which are frequently used in this section and the later sections.

\begin{lemma}\label{lem3a}
Let $B\in\aa_+$ such that $0\in\sigma(B)$ is an isolated point. Then there is a unique element $B^\dag\in\aa_+$ such that
$$
BB^\dag B=B,\ B^\dag BB^\dag=B^\dag,\ BB^\dag=B^\dag B.
$$
\end{lemma}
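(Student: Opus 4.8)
\textbf{Proof proposal for Lemma \ref{lem3a}.}

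The plan is to build $B^\dag$ by functional calculus, using the hypothesis that $0$ is an isolated point of $\sigma(B)$ to produce the spectral projection that separates the kernel direction from the invertible part. First I would note that since $0$ is isolated in $\sigma(B)$, there is $\delta>0$ with $\sigma(B)\cap(0,\delta)=\emptyset$, so $\sigma(B)\subset\{0\}\cup[\delta,\|B\|]$. Let $f$ be the continuous function on $\sigma(B)$ that equals $0$ at $0$ and equals $t^{-1}$ on $[\delta,\|B\|]$ (this is continuous on the disconnected set $\sigma(B)$), and set $B^\dag=f(B)\in\aa$. Since $f\ge 0$ on $\sigma(B)$, we have $B^\dag\in\aa_+$. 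Also let $p=g(B)$ where $g$ is $0$ at $0$ and $1$ on $[\delta,\|B\|]$; then $p$ is a projection commuting with everything in $C^*(B,1)$, and $tf(t)=g(t)$, $t f(t)^2 = f(t)$, $f(t)g(t)=f(t)$ on $\sigma(B)$. Feeding these polynomial-in-$t$ identities through the continuous functional calculus gives $BB^\dag=B^\dag B=p$, hence $BB^\dag B=pB=B$ (because $tg(t)=t$ on $\sigma(B)$) and $B^\dag BB^\dag=pB^\dag=B^\dag$ (because $g(t)f(t)=f(t)$); all three defining relations hold, and commutativity $BB^\dag=B^\dag B$ is automatic since both are continuous functions of $B$.

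For uniqueness, suppose $X\in\aa_+$ also satisfies $BXB=B$, $XBX=X$, $XB=BX$. Since $X$ commutes with $B$ it commutes with $p$ (a norm limit of polynomials in $B$), and from $BXB=B$ one gets $pB\cdot X = \ldots$; more directly, multiply $BXB=B$ on the left by $B^\dag$ and use $B^\dag B = p$ to get $pXB = B^\dag B = p B^\dag{}^{-1}$ — cleaner: from $BXB=B$ and $B^\dag$ being the functional-calculus inverse on $\Ran p$, note $XB = X B (B^\dag B) = X (B X B) B^\dag = (XBX)BB^\dag$ using $BX=XB$... I would instead argue as follows. Both $X$ and $B^\dag$ lie in the commutant, and the relations say each is a ``generalized inverse'' of $B$ that is self-adjoint and commuting; it is a standard fact (the Moore--Penrose inverse relative to the commutative $C^*$-algebra $C^*(B,1)$, or relative to $C^*(B,X,1)$ which is commutative since $B$ and $X$ commute and both are self-adjoint) that such an inverse is unique. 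Concretely, in the commutative $C^*$-algebra generated by $B$ and $X$, Gelfand transform reduces the identities to the scalar statements $b x b = b$, $x b x = x$ pointwise on the spectrum, which force $x = b^{-1}$ wherever $b\ne 0$ and $x=0$ wherever $b=0$ (the latter because $xbx=x$ gives $x=0$ when $b=0$); hence $X$ agrees with $B^\dag$ under the Gelfand transform, so $X=B^\dag$.

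The one genuine subtlety — the step I would be most careful about — is the uniqueness argument, specifically the claim that $C^*(B,X,1)$ is commutative so that the Gelfand-transform reduction to scalars is legitimate. This needs $X$ self-adjoint and $XB=BX$, both of which are hypotheses, and then the double commutant / Fuglede-type reasoning (or simply: $X$ commutes with $B$, hence with $C^*(B,1)$; and $C^*(B,X,1)$ is generated by the two commuting self-adjoints $B$ and $X$, hence abelian). Everything else is a routine verification of polynomial identities in the functional calculus, so I would present the existence part in full and the uniqueness part via the Gelfand transform on the commutative algebra generated by $B$ and any competitor $X$.
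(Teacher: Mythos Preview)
Your proof is correct. The paper does not actually give an argument here: its entire proof of Lemma~\ref{lem3a} is a citation to three results in \cite{Xue}, so there is no substantive comparison to make. Your route via continuous functional calculus --- defining $B^\dag=f(B)$ with $f(0)=0$ and $f(t)=t^{-1}$ on $\sigma(B)\setminus\{0\}$, and handling uniqueness by passing to the commutative $C^*$-algebra $C^*(B,X,1)$ and applying the Gelfand transform --- is the standard construction of the Moore--Penrose inverse for a positive element with isolated spectral point at $0$, and is almost certainly what the cited propositions in \cite{Xue} do. The one place your write-up wanders (the aborted direct computation before switching to the Gelfand argument) should simply be deleted in a final version; the Gelfand-transform argument you settle on is clean and complete, and your justification that $C^*(B,X,1)$ is abelian (two commuting self-adjoints) is exactly right.
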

\begin{proof} The assertion follows from Proposition 3.5.8, Proposition 3.5.3 and Lemma 3.5.1 of \cite{Xue}.
\end{proof}

\begin{remark}
{\rm
The element $B^\dag$ in Lemma \ref{lem3a} is called the Moore--Penrose inverse of $B$. When $0\not\in\sigma(B)$,
$B^\dag\triangleq B^{-1}$. The detailed information can be found in \cite{Xue}.
}
\end{remark}

The following lemma comes from \cite[Lemma 3.5.5]{Xue} and \cite[Lemma 1]{CX}:
\begin{lemma}\label{lem3b}
Let $P\in\aa$ be an idempotent element. Then
\begin{enumerate}
\item[$(1)$] $P+P^*-1\in GL(\aa)$.
\item[$(2)$] $R=P(P+P^*-1)^{-1}$ is a projection in $\aa$ satisfying $PR=R$ and $RP=P$.
\end{enumerate}
Moreover, if $R'\in\aa$ is a projection such that $PR'=R'$ and $R'P=P$, then $R'=R$.
\end{lemma}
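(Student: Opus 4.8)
The plan is to establish the invertibility in (1) first --- this is the only step where analysis enters --- and then to deduce everything in (2) together with the uniqueness statement by purely algebraic manipulations of $S:=P+P^{*}-1$. For (1), note $S^{*}=S$, so $S$ is invertible iff $S^{2}$ is, and expanding with $P^{2}=P$, $(P^{*})^{2}=P^{*}$ gives
$$S^{2}=1-P-P^{*}+PP^{*}+P^{*}P=P^{*}P+(1-P)(1-P)^{*}\in\aa_{+}.$$
I would show $S^{2}\in GL(\aa)$ by a state argument: if not, then $\inf\{\varphi(S^{2}):\varphi\text{ a state on }\aa\}=0$, so there are states $\varphi_{k}$ with $\varphi_{k}(P^{*}P)\to0$ and $\varphi_{k}((1-P)(1-P)^{*})\to0$; the Cauchy--Schwarz inequality for states then yields $|\varphi_{k}(P)|^{2}\le\varphi_{k}(P^{*}P)\to0$ and $|1-\varphi_{k}(P)|^{2}=|\varphi_{k}((1-P)^{*})|^{2}\le\varphi_{k}((1-P)(1-P)^{*})\to0$, which is impossible. (Equivalently one may pass to a faithful representation and argue on the Hilbert space, or simply invoke \cite[Lemma 3.5.5]{Xue}.) Hence $S^{2}$, and therefore $S$, is invertible.

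For (2), direct expansion using $P^{2}=P$ produces the four identities $PS=PP^{*}=SP^{*}$ and $SP=P^{*}P=P^{*}S$. Put $R=PS^{-1}$. Then $PR=P^{2}S^{-1}=PS^{-1}=R$. From $SP=P^{*}P$ one gets $SR=(SP)S^{-1}=P^{*}PS^{-1}$, while multiplying $P^{*}S=P^{*}P$ on the right by $S^{-1}$ gives $P^{*}PS^{-1}=P^{*}$; hence $SR=P^{*}$, i.e. $R=S^{-1}P^{*}$. Therefore $R^{*}=(PS^{-1})^{*}=S^{-1}P^{*}=R$, so $R$ is self-adjoint. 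Taking adjoints in $P^{*}PS^{-1}=P^{*}$ gives $S^{-1}P^{*}P=P$, whence $RP=S^{-1}P^{*}P=P$, and then $R^{2}=(RP)S^{-1}=PS^{-1}=R$. Thus $R$ is a projection with $PR=R$ and $RP=P$, as required.

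For uniqueness, suppose $R'\in\aa$ is a projection with $PR'=R'$ and $R'P=P$. Taking adjoints in $PR'=R'$ and using $(R')^{*}=R'$ gives $R'P^{*}=R'$; hence $R'S=R'P+R'P^{*}-R'=P+R'-R'=P=RS$, and invertibility of $S$ forces $R'=R$. The only genuine obstacle in the whole argument is the invertibility of $P+P^{*}-1$ in part (1): once that is in hand, every other assertion is an immediate consequence of the multiplicative identities satisfied by $S$.
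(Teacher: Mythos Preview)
Your argument is correct in all parts: the computation $S^{2}=P^{*}P+(1-P)(1-P)^{*}$ is right, the state argument for invertibility is valid (the numerical range of a self-adjoint element in a unital $C^{*}$-algebra is the convex hull of its spectrum, so non-invertibility of a positive element forces the infimum over states to be zero), and the algebraic verifications in (2) and in the uniqueness step all check out line by line.

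There is, however, nothing in the paper to compare against: the authors do not prove Lemma~\ref{lem3b} at all, but simply import it from \cite[Lemma 3.5.5]{Xue} and \cite[Lemma 1]{CX}. Your write-up therefore supplies a self-contained proof where the paper only gives a citation. The approach you take---reducing (1) to positivity of $S^{2}$ and then disposing of (2) and uniqueness via the four identities $PS=PP^{*}=SP^{*}$, $SP=P^{*}P=P^{*}S$---is the standard one and is essentially what one finds in the cited sources; the state argument for invertibility is a clean alternative to passing to a faithful Hilbert-space representation, and either route works equally well here.
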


Let $(P_1,\cdots,P_n)\in\PC_n(\aa)$ and put $A=\sum\limits^n_{i=1}P_i$. By Theorem \ref{th1}, $A\in GL(\aa)$ and
$E_i=P_iA^{-1}$, $1\le i\le n$ are idempotent elements satisfying conditions
$$
E_iE_j=0,\ i\not=j,\ E_iP_i=P_i,\ P_iE_i=E_i,\ i=1,\cdots,n,\ \text{and}\ \sum\limits^n_{i=1}E_i=1.
$$
By Lemma \ref{lem3b}, $P_i=E_i(E_i^*+E_i-1)^{-1}$, $1\le i\le n$.
So the $C^*$--algebra $C^*(P_1,\cdots,P_n)$ generated by $P_1,\cdots,P_n$ is equal to the $C^*$--algebra
$C^*(E_1,\cdots,E_n)$ generated by $E_1,\cdots,E_n$.

Put $Q_i=A^{-1/2}P_iA^{-1/2}$, $i=1,\cdots,n$. Then $Q_iQ_j=\delta_{ij}Q_i$ by Theorem \ref{th1}, $i,j=1,\cdots,n$
and $\sum\limits_{i=1}^nQ_i=1$. Thus,
\begin{equation}\label{3eqa}
P_i=A^{1/2}Q_iA^{1/2}\ \text{and}\ E_i=P_iA^{-1}=A^{1/2}Q_iA^{-1/2},\ i=1,\cdots,n.
\end{equation}

\begin{proposition}\label{prop3a}
Let $(P_1,\cdots,P_n)\in\PC_n(\aa)$ with $A=\sum\limits^n_{i=1}P_i$. Then for any $\lambda_i\not=0$, $i=1,\cdots,n$,
$\big(\sum\limits_{i=1}^n\lambda_iP_i\big)^{-1}=A^{-1}\big(\sum\limits_{i=1}^n\lambda_i^{-1}P_i\big)A^{-1}.$
\end{proposition}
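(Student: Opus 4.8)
The plan is to verify the formula by exhibiting a two-sided inverse, so that invertibility of $\sum_{i=1}^n\lambda_iP_i$ and the identification of its inverse come out simultaneously. The only structural input needed is the relation $P_iA^{-1}P_j=\delta_{ij}P_i$ for all $i,j=1,\cdots,n$, which is precisely the content of statements $(6)$ and $(7)$ of Theorem \ref{th1} for a complete $n$--tuple (with $A=\sum_{i=1}^nP_i\in GL(\aa)$). Granting this, I would compute the product of $\sum_{i=1}^n\lambda_iP_i$ with the candidate inverse $A^{-1}\big(\sum_{j=1}^n\lambda_j^{-1}P_j\big)A^{-1}$ and collapse the double sum: only the diagonal terms $i=j$ survive, each contributing $\lambda_i\lambda_i^{-1}P_i=P_i$, so the inner factor becomes $\big(\sum_iP_i\big)A^{-1}=AA^{-1}=1$; the same cancellation in the opposite order gives the other identity. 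Since we have produced an element that is inverse to $\sum_i\lambda_iP_i$ on both sides, it follows that $\sum_i\lambda_iP_i\in GL(\aa)$ and that its inverse is the asserted one.

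An essentially equivalent route is to pass through the mutually orthogonal projections $Q_i=A^{-1/2}P_iA^{-1/2}$ of \eqref{3eqa}: since $Q_iQ_j=\delta_{ij}Q_i$ and $\sum_iQ_i=1$, the element $\sum_i\lambda_iQ_i$ is visibly invertible with inverse $\sum_i\lambda_i^{-1}Q_i$, and conjugating by $A^{1/2}$ via $P_i=A^{1/2}Q_iA^{1/2}$ turns this into the claimed statement, after rewriting $A^{-1/2}Q_iA^{-1/2}=A^{-1}P_iA^{-1}$. I would likely include this remark, since it explains where the symmetric shape $A^{-1}\big(\sum_i\lambda_i^{-1}P_i\big)A^{-1}$ comes from, namely the "spectral decomposition" $\sum_i\lambda_iQ_i$ in the straightened picture where the $P_i$ become genuine orthogonal projections.

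I do not expect any genuine obstacle here: the proposition is a short computation once Theorem \ref{th1} is in hand. The one point worth making explicit is that one should not write $\big(\sum_i\lambda_iP_i\big)^{-1}$ before knowing that this element is invertible; both approaches above avoid this cleanly — the first by producing a bona fide two-sided inverse, the second by conjugating an element whose invertibility is obvious — so the write-up should be arranged so that invertibility is established in the same stroke as the formula.
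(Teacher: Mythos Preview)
Your proposal is correct, and your second route via the mutually orthogonal projections $Q_i=A^{-1/2}P_iA^{-1/2}$ is exactly the paper's proof: it writes $\sum_i\lambda_iP_i=A^{1/2}\big(\sum_i\lambda_iQ_i\big)A^{1/2}$, inverts the middle factor to $\sum_i\lambda_i^{-1}Q_i$, and then rewrites $A^{-1/2}Q_iA^{-1/2}=A^{-1}P_iA^{-1}$. Your first route (direct two-sided inverse check using $P_iA^{-1}P_j=\delta_{ij}P_i$) is an equally short variant that avoids the square root, but it is not a genuinely different argument.
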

{\it\noindent Proof.}\ \,Keeping the symbols as above. We have
$\sum\limits_{i=1}^n\lambda_iP_i=A^{1/2}\big(\sum\limits^n_{i=1}\lambda_i Q_i\big)A^{1/2}$. Thus,
$$
\hspace{1.7cm}\big(\sum\limits_{i=1}^n\lambda_iP_i\big)^{-1}=A^{-1/2}\big(\sum\limits^n_{i=1}\lambda_i^{-1} Q_i\big)A^{-1/2}
=A^{-1}\big(\sum_{i=1}^n\lambda_i^{-1}P_i\big)A^{-1}.\hspace{1cm}\qed
$$

Now for $i_1,i_2,\cdots,i_k\in\{1,2,\cdots,n\}$ with $i_1<i_2<\cdots<i_k$, put $A_0=\sum\limits^k_{r=1}P_{i_r}$ and
$Q_0=\sum\limits^k_{r=1}Q_{i_r}$. Then $A_0,Q_0\in\aa$ and $Q_0$ is a projection. From (\ref{3eqa}),
$A_0=A^{1/2}Q_0A^{1/2}$. Thus, $\sigma(A_0)\backslash\{0\}
=\sigma(Q_0AQ_0)\backslash\{0\}$ (cf. \cite[Proposition 1.4.14]{Xue}). Since $Q_0AQ_0$ is invertible in $Q_0\aa Q_0$,
it follows that $0\in\sigma(Q_0AQ_0)$ is an isolated point and so that $0\in\sigma(A_0)$ is also an isolated point.
So we can define $P_{i_1}\vee\cdots\vee P_{i_k}$ to be the projection $A_0^\dag A_0\in\aa$ by Lemma \ref{lem3a}.
This definition is reasonable:

if $P\in\aa$ is a projection such that $P\ge P_{i_r}$, $r=1,\cdots,k$, then $PA_0=A_0$
and hence $PA_0A_0^\dag=A_0A_0^\dag$, i.e., $P\ge P_{i_1}\vee\cdots\vee P_{i_k}$; Since $A_0\ge P_{i_r}$, we have
$$
0=(1-A_0^\dag A_0)A_0(1-A_0^\dag A_0)\ge(1-A_0^\dag A_0)P_{i_r}(1-A_0^\dag A_0)
$$
and consequently, $P_{i_r}(1-A_0^\dag A_0)=0$, that is, $P_{i_r}\le P_{i_1}\vee\cdots\vee P_{i_k}$, $i=1,\cdots,k$.

\begin{proposition} \label{prop3b}
Let $(P_1,\cdots,P_n)\in\PC_n(\aa)$ with $A=\sum\limits^n_{i=1}P_i$. Let $i_1,\cdots,i_k$ be as above and
$\{j_1,\cdots,j_l\}=\{1,\cdots,n\}\backslash\{i_1,\cdots,i_k\}$ with $j_1<\cdots<j_l$.
Then
\begin{align}
\label{3eqb} P_{i_1}\vee\cdots \vee P_{i_k}&=
A^{1/2}\big[\big(\sum\limits_{r=1}^k Q_{i_r}\big)A\big(\sum\limits_{r=1}^k Q_{i_r}\big)\big]^{-1}A^{1/2}\\
\label{3eqc} &=\big(\sum_{r=1}^kP_{i_r}\big)\big[\big(\sum_{r=1}^kP_{i_r}\big)^2+\sum_{t=1}^lP_{j_t}\big]^{-1}
\big(\sum_{r=1}^kP_{i_r}\big).
\end{align}
\end{proposition}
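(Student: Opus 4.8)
The plan is to build everything on the decomposition $P_i = A^{1/2}Q_i A^{1/2}$ with $Q_iQ_j = \delta_{ij}Q_i$ and $\sum_i Q_i = 1$, already established in \eqref{3eqa}. Set $Q_0 = \sum_{r=1}^k Q_{i_r}$ and $A_0 = \sum_{r=1}^k P_{i_r} = A^{1/2}Q_0 A^{1/2}$, exactly as in the paragraph preceding Proposition~\ref{prop3b}, where it was shown that $0$ is an isolated point of $\sigma(A_0)$ and $P_{i_1}\vee\cdots\vee P_{i_k} = A_0^\dag A_0$.

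First I would prove \eqref{3eqb}. The key observation is that $Q_0$ commutes with the projection $P_{i_1}\vee\cdots\vee P_{i_k}$ in the following sense: since $Q_0 A Q_0$ is invertible in the corner $Q_0\aa Q_0$, write $C = Q_0 A Q_0$ and let $C^{-1}$ denote its inverse in that corner (so $C^{-1} = Q_0 C^{-1} Q_0$). I claim $A_0^\dag A_0 = A^{1/2} C^{-1} A^{1/2}$. To see this, let $R = A^{1/2}C^{-1}A^{1/2}$; one checks directly that $R$ is a projection using $A_0 = A^{1/2}Q_0 A^{1/2}$ and $C^{-1}Q_0 A Q_0 C^{-1} = C^{-1}$, namely $R^2 = A^{1/2}C^{-1}(Q_0 A Q_0)C^{-1}A^{1/2} = A^{1/2}C^{-1}A^{1/2} = R$ and $R^* = R$ since $C^{-1}\in\aa_+$. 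Then $A_0 R = A^{1/2}Q_0 A^{1/2}\cdot A^{1/2}C^{-1}A^{1/2} = A^{1/2}(Q_0 A Q_0)C^{-1}A^{1/2} = A^{1/2}Q_0 A^{1/2} = A_0$ and similarly $R A_0 = A_0$, so $R A_0 R = A_0$ and $R = R A_0 R$ would need the generalized inverse relations; more cleanly, from $A_0 R = A_0$ and $R A_0 = A_0$ together with $R = R^*$ one gets $R \ge A_0^\dag A_0$ directly (any projection dominating all $P_{i_r}$ dominates the join, and $R \ge A_0$ forces $R \ge A_0^\dag A_0$), while conversely $A_0^\dag A_0 \ge R$ because $R = A^{1/2}C^{-1}A^{1/2}$ has range inside the range of $A_0$; identifying the two projections gives $P_{i_1}\vee\cdots\vee P_{i_k} = A^{1/2}(Q_0 A Q_0)^{-1}A^{1/2}$, which is \eqref{3eqb}.

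Next, for \eqref{3eqc}, the strategy is to unwind $Q_0 A Q_0$ back in terms of the $P$'s. Since $Q_0 A Q_0 = Q_0 A^{1/2}\cdot A^{1/2}Q_0 = (A^{1/2}Q_0 A^{1/2})(A^{-1})(A^{1/2}Q_0 A^{1/2}) = A_0 A^{-1} A_0$ — using $Q_0 A^{1/2} A^{1/2} Q_0 = Q_0 A Q_0$ and inserting $A^{1/2}A^{-1}A^{1/2} = 1$ between the two $Q_0$'s — wait, more carefully: $A_0 A^{-1}A_0 = A^{1/2}Q_0 A^{1/2}A^{-1}A^{1/2}Q_0 A^{1/2} = A^{1/2}Q_0 A Q_0 A^{-1}\cdot$ no. Let me instead use $A_0 A^{-1}A_0 = A^{1/2}Q_0 A^{1/2}\,A^{-1}\,A^{1/2}Q_0 A^{1/2} = A^{1/2}Q_0 Q_0 A^{1/2} = A^{1/2}Q_0 A^{1/2} = A_0$, which shows instead $A_0 A^{-1}A_0 = A_0$. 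So that route computes the wrong product; the right one is $A^{1/2}(Q_0 A Q_0)^{-1}A^{1/2}$ directly. The cleanest path: note $Q_0 A Q_0 = Q_0(\sum_t P_{j_t} + A_0)Q_0$, and since $Q_{j_t}Q_0 = 0$ we have $Q_0 P_{j_t}Q_0 = Q_0 A^{1/2}Q_{j_t}A^{1/2}Q_0$, which need not vanish. Instead I would verify \eqref{3eqc} by checking that the right-hand side $R' = A_0[A_0^2 + \sum_t P_{j_t}]^{-1}A_0$ satisfies $R' A_0 = A_0$ and $A_0 R' = A_0$ and $R' = (R')^*$, then invoke the uniqueness of the join as above; invertibility of $A_0^2 + \sum_t P_{j_t} = (\sum_r P_{i_r})^2 + \sum_t P_{j_t}$ follows since it dominates a positive multiple of $\sum_i P_i = A \in GL(\aa)$ exactly as in step (4)$\Rightarrow$(5) of Theorem~\ref{th1}. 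The identity $A_0 R' A_0 = A_0^2(A_0^2 + \sum_t P_{j_t})^{-1}A_0^2$ together with $A_0^2(A_0^2+\sum_t P_{j_t})^{-1} = 1 - (\sum_t P_{j_t})(A_0^2 + \sum_t P_{j_t})^{-1}$ reduces matters to showing $A_0(\sum_t P_{j_t})(A_0^2 + \sum_t P_{j_t})^{-1}A_0 = 0$, equivalently $A_0 P_{j_t}\cdot[\,\cdot\,] = 0$; and here I would apply Lemma~\ref{1L2}(3) or (4) to the pair $B = A_0^2$ (or rather to $A_0$ and $\sum_t P_{j_t}$ after a square-root adjustment) — the delicate point being the passage between $A_0$ and $A_0^2$ in the hypotheses of Lemma~\ref{1L2}.

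The main obstacle I anticipate is precisely the manipulation in \eqref{3eqc}: proving $A_0 P_{j_t}(A_0^2 + \sum_s P_{j_s})^{-1} = 0$ for each $t$. One wants to say this follows because ``$\Ran(P_{j_t})$ is complementary to $\Ran(A_0)$'' inside the relevant corner, but turning that geometric fact into the algebraic identity with $A_0$ (degree one) sitting next to $A_0^2 + \sum P_{j_s}$ (a sum involving $A_0^2$) requires care. I expect the resolution to go through the $Q$-picture: since $A_0 = A^{1/2}Q_0 A^{1/2}$, $\sum_t P_{j_t} = A^{1/2}(1-Q_0)A^{1/2}$, and $A_0^2 + \sum_t P_{j_t} = A^{1/2}(Q_0 A Q_0 + (1-Q_0))A^{1/2}$ after expanding $A_0^2 = A^{1/2}Q_0 A Q_0 A^{1/2}$, the inverse is $A^{-1/2}(Q_0 A Q_0 + (1-Q_0))^{-1}A^{-1/2}$ where the inner operator is block-diagonal with respect to $Q_0$, hence its inverse is $(Q_0 A Q_0)^{-1} + (1-Q_0)$; substituting back collapses \eqref{3eqc} to \eqref{3eqb} after the $A^{1/2}$'s cancel the appropriate $Q_0$'s. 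This block-diagonalization is the technical heart, and once it is in place both identities drop out together.
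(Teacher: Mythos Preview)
Your proposal is correct, though written in an exploratory style with several abandoned threads. The final route you settle on---verifying directly that $R=A^{1/2}(Q_0AQ_0)^{-1}A^{1/2}$ is a projection with $RA_0=A_0=A_0R$ and then identifying $R$ with $A_0^\dag A_0$ by mutual domination, and for \eqref{3eqc} using the block-diagonalization $A_0^2+\sum_t P_{j_t}=A^{1/2}\bigl(Q_0AQ_0+(1-Q_0)\bigr)A^{1/2}$---works. One small slip: you write ``$R\ge A_0$,'' which is not the right inequality (and need not hold, since $\|A\|$ may exceed $1$); what you actually use, and what is correct, is $RA_0=A_0$, which yields $RP_{i_r}=P_{i_r}$ for each $r$ and hence $R\ge A_0^\dag A_0$. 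The converse $A_0^\dag A_0\ge R$ follows cleanly by multiplying $A_0^\dag A_0\cdot A^{1/2}Q_0=A^{1/2}Q_0$ (obtained from $A_0^\dag A_0\cdot A_0=A_0$ and right-multiplication by $A^{-1/2}$) into $R=A^{1/2}Q_0C^{-1}A^{1/2}$.

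The paper takes a different path to \eqref{3eqb}: it works with the idempotent $E=\sum_{r}E_{i_r}=A^{1/2}Q_0A^{-1/2}$, observes that $E\cdot(P_{i_1}\vee\cdots\vee P_{i_k})=P_{i_1}\vee\cdots\vee P_{i_k}$ and $(P_{i_1}\vee\cdots\vee P_{i_k})\cdot E=E$, and then invokes the uniqueness clause of Lemma~\ref{lem3b} to write the join as $E(E+E^*-1)^{-1}$, which it expands in the $Q$-picture. Your direct verification avoids the idempotent/range-projection machinery of Lemma~\ref{lem3b} and is arguably more elementary; the paper's route, on the other hand, makes the role of the idempotents $E_i$ explicit and yields formula \eqref{3eqd} as a byproduct. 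For \eqref{3eqc} the two arguments coincide: both reduce to the same block-diagonal inversion $\bigl(Q_0AQ_0+(1-Q_0)\bigr)^{-1}=(Q_0AQ_0)^{-1}+(1-Q_0)$ that you identify at the end of your sketch.
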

\begin{proof} Using the symbols $P_i, Q_i, E_i$ as above. According to (\ref{3eqa}),
$$
\sum\limits_{r=1}^k P_{i_r}=A^{1/2}\big(\sum\limits_{r=1}^k Q_{i_r}\big)A^{1/2},\
\sum\limits_{r=1}^k E_{i_r}=A^{1/2}\big(\sum\limits_{r=1}^k Q_{i_r}\big)A^{-1/2}.
$$
Thus $\big(\sum\limits_{r=1}^k E_{i_r}\big)\big(\sum\limits_{r=1}^k P_{i_r}\big)=\sum\limits_{r=1}^k P_{i_r}$
and $\sum\limits_{r=1}^k E_{i_r}=\big(\sum\limits_{r=1}^k P_{i_r}\big)A^{-1}$. Then we have
$$
\big(\sum\limits_{r=1}^k E_{i_r}\big)P_{i_1}\vee\cdots \vee P_{i_k}=P_{i_1}\vee\cdots \vee P_{i_k},\quad
P_{i_1}\vee\cdots \vee P_{i_k}\big(\sum\limits_{r=1}^k E_{i_r}\big)=\sum\limits_{r=1}^k E_{i_r},
$$
according to the definition of $P_{i_1}\vee\cdots \vee P_{i_k}$.

Since
$\sum\limits_{r=1}^k E_{i_r}$ is an idempotent element in $\aa$, it follows from Lemma \ref{lem3b} that
\begin{equation}\label{3eqd}
P_{i_1}\vee\cdots \vee P_{i_k}=\big(\sum\limits_{r=1}^k E_{i_r}\big)
\big[\sum\limits_{r=1}^k (E_{i_r}^*+E_{i_r})-1\big]^{-1}\in\aa.
\end{equation}
Noting that
$\big(\sum\limits_{r=1}^k Q_{i_r}\big)A\big(\sum\limits_{r=1}^k Q_{i_r}\big)\in
GL\big(\big(\sum\limits_{r=1}^k Q_{i_r}\big)\aa\big(\sum\limits_{r=1}^k Q_{i_r}\big)\big)$;
$\big(\sum\limits_{t=1}^l Q_{j_t}\big)A\big(\sum\limits_{t=1}^k Q_{j_t}\big)$ is invertible in
$\big(\sum\limits_{t=1}^k Q_{j_t}\big)\aa\big(\sum\limits_{t=1}^k Q_{j_t}\big)$ and
\begin{align*}
\sum\limits_{r=1}^k (E_{i_r}^*+E_{i_r})-1&=A^{-1/2}\big[\big(\sum\limits_{r=1}^k Q_{i_r}\big)A+
A\big(\sum\limits_{r=1}^k Q_{i_r}\big)-A\big]A^{-1/2}\\
&=A^{-1/2}\big[\big(\sum\limits_{r=1}^k Q_{i_r}\big)A\big(\sum\limits_{r=1}^k Q_{i_r}\big)\!-\!
\big(\sum\limits_{t=1}^l Q_{j_t}\big)A\big(\sum\limits_{t=1}^l Q_{j_t}\big)\big]A^{-1/2},
\end{align*}
we obtain that
\begin{align*}
\big[\sum\limits_{r=1}^k (E_{i_r}^*&+E_{i_r})-1\big]^{-1}\\
&=A^{1/2}\big[\big[\big(\sum\limits_{r=1}^k Q_{i_r}\big)A\big(\sum\limits_{r=1}^k Q_{i_r}\big)\big]^{-1}\!-\!
\big[\big(\sum\limits_{t=1}^l Q_{j_t}\big)A\big(\sum\limits_{t=1}^l Q_{j_t}\big)\big]^{-1}\big]A^{1/2}.
\end{align*}
Combining this with (\ref{3eqd}), we can get (\ref{3eqb}).

Note that $\sum\limits_{r=1}^kP_{i_r}=A^{1/2}\big(\sum\limits_{r=1}^kQ_{i_r}\big)A^{1/2}$,
$\sum\limits_{t=1}^lP_{j_t}=A^{1/2}\big(\sum\limits_{t=1}^lQ_{j_t}\big)A^{1/2}$ and
$
\big(\sum\limits_{r=1}^kP_{i_r}\big)^2$ $=A^{1/2}\big(\sum\limits_{r=1}^kQ_{i_r}\big)A\big(\sum\limits_{r=1}^kQ_{i_r}\big)
A^{1/2}.
$
Therefore,
\begin{align*}
\big(&\sum_{r=1}^kP_{i_r}\big)\big[\big(\sum_{r=1}^kP_{i_r}\big)^2+\sum_{t=1}^lP_{j_t}\big]^{-1}
\big(\sum_{r=1}^kP_{i_r}\big)\\
&=A^{1/2}\big(\sum\limits_{r=1}^kQ_{i_r}\big)\big(\big[\big(\sum\limits_{r=1}^kQ_{i_r}\big)A
\big(\sum\limits_{r=1}^kQ_{i_r}\big)\big]^{-1}+\sum\limits_{t=1}^lQ_{j_t}\big)\big(\sum\limits_{r=1}^kQ_{i_r}\big)A^{1/2}\\
&=P_{i_1}\vee\cdots \vee P_{i_k}
\end{align*}
by (\ref{3eqb}).
\end{proof}

\section{Perturbations of a complete $n$--tuple of projections}
\setcounter{equation}{0}

Recall that for any non--zero operator $C\in B(H)$, the reduced minimum modulus $\gamma(C)$ is given by
$\gamma(C)=\{\|Cx\|\,\vert\,x\in(\Ker(C))^\perp,\,\|x\|=1\}$ (cf. \cite[Remark 1.2.10]{Xue}).
We list some properties of the reduced minimum modulus as our lemma as follows.

\begin{lemma}[\rm cf. \cite{Xue}]\label{lem4b}
Let $C$ be in $B(H)\backslash\{0\}$, Then
\begin{enumerate}
\item[$(1)$] $\|Cx\|\ge\gamma(C)\|x\|$, $\forall\,x\in(\Ker(C))^\perp$.
\item[$(2)$] $\gamma(C)=\inf\{\lambda\,\vert\,\lambda\in\sigma(|C|)\backslash\{0\}\}$, where $|C|=(C^*C)^{1/2}$.
\item[$(3)$] $\gamma(C)>0$ iff $\Ran(C)$ is closed iff $0$ is an isolated point of $\sigma(|C|)$ if $0\in\sigma(|C|)$.
\item[$(4)$] $\gamma(C)=\|C^{-1}\|^{-1}$ when $C$ is invertible.
\item[$(5)$] $\gamma(C)\ge\|B\|^{-1}$ when $CBC=C$ for $B\in B(H)\backslash\{0\}$.
\end{enumerate}
\end{lemma}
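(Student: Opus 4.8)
The plan is to reduce every item to a statement about the positive operator $|C|=(C^*C)^{1/2}$ and then transport it back through the polar decomposition $C=U|C|$, where $U$ is a partial isometry that is isometric on $\overline{\Ran(|C|)}=(\Ker(C))^\perp$. Item $(1)$ is immediate from the definition: for nonzero $x\in(\Ker(C))^\perp$ apply the defining infimum to the unit vector $x/\|x\|$. The basic reduction is the identity $\|Cx\|^2=\langle C^*Cx,x\rangle=\||C|x\|^2$, valid for all $x\in H$; it gives $\Ker(C)=\Ker(|C|)$ and, on taking the infimum over the common unit sphere of $(\Ker(C))^\perp$, $\gamma(C)=\gamma(|C|)$.

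For $(2)$ I would invoke the spectral theorem for $|C|\ge 0$, with spectral measure $E$ carried by $\sigma(|C|)\subseteq[0,\infty)$, and set $m=\inf(\sigma(|C|)\setminus\{0\})$, a well-defined real number since $C\ne 0$. Since $\Ker(|C|)=\Ran(E(\{0\}))$, every unit vector $x\in(\Ker(|C|))^\perp$ has $E(\{0\})x=0$, so $\||C|x\|^2=\int_{\sigma(|C|)\setminus\{0\}}t^2\,d\langle E_tx,x\rangle\ge m^2$; hence $\gamma(|C|)\ge m$. Conversely, for any $\lambda\in\sigma(|C|)\setminus\{0\}$ and any $\delta\in(0,\lambda)$ the spectral projection $E((\lambda-\delta,\lambda+\delta))$ is nonzero and lies in $(\Ker(|C|))^\perp$, and a unit vector $x$ in its range satisfies $\||C|x\|\le\lambda+\delta$; letting $\delta\to 0$ and then infimizing over $\lambda$ gives $\gamma(|C|)\le m$. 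Thus $\gamma(C)=\gamma(|C|)=m$.

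Item $(4)$ is the case $\Ker(C)=\{0\}$: for a unit vector $x$, $1=\|C^{-1}Cx\|\le\|C^{-1}\|\,\|Cx\|$ yields $\gamma(C)\ge\|C^{-1}\|^{-1}$, while choosing a unit $y$ with $\|C^{-1}y\|$ close to $\|C^{-1}\|$ and $x=C^{-1}y/\|C^{-1}y\|$ yields $\gamma(C)\le\|C^{-1}\|^{-1}$. For $(3)$: by $(2)$, $\gamma(C)>0$ precisely when $0$ fails to be an accumulation point of $\sigma(|C|)$, which (under the hypothesis $0\in\sigma(|C|)$) means $0$ is an isolated point of $\sigma(|C|)$. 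For the equivalence with closedness of $\Ran(C)$ I would again pass to $|C|$: the polar decomposition identifies $\overline{\Ran(|C|)}$ isometrically with $\overline{\Ran(C)}$ via $U$, so $\Ran(C)$ is closed iff $\Ran(|C|)$ is; and $\Ran(|C|)$ is closed iff $|C|$ is bounded below on $(\Ker(|C|))^\perp$ — one direction is the estimate in $(1)$ together with $\gamma>0$, the other is the open mapping theorem applied to the continuous bijection $|C|\colon(\Ker(|C|))^\perp\to\Ran(|C|)$ — i.e. iff $\gamma(|C|)=\gamma(C)>0$.

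Finally, for $(5)$ assume $CBC=C$ with $B\ne 0$. Then $BC$ is idempotent, and $BCx=0$ forces $Cx=CBCx=0$, so $\Ker(BC)=\Ker(C)$. Given $x\in(\Ker(C))^\perp$, the vector $x-BCx$ satisfies $BC(x-BCx)=BCx-BCx=0$, hence lies in $\Ker(BC)=\Ker(C)$ and is orthogonal to $x$; therefore $\|x\|^2=\langle x,BCx\rangle\le\|B\|\,\|Cx\|\,\|x\|$, so $\|Cx\|\ge\|B\|^{-1}\|x\|$ and $\gamma(C)\ge\|B\|^{-1}$. The only mildly delicate points in this plan are the case distinction in $(2)$ according to whether $0$ is isolated in $\sigma(|C|)$ or an accumulation point of it, and the appeal to the open mapping theorem in $(3)$; everything else is routine once the reduction to $|C|$ has been made.
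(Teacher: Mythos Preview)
Your proof is correct in all five parts; the argument for (5) via the orthogonality of $x$ and $x-BCx$ is clean and avoids any appeal to closed range. There is nothing to compare against, however: the paper does not prove this lemma at all but simply quotes it from the reference \cite{Xue} (note the ``cf.\ \cite{Xue}'' in the lemma heading), treating it as background material on the reduced minimum modulus. So your self-contained spectral-theoretic proof goes well beyond what the paper itself supplies.
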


For $a\in\aa_+$, put $\beta(a)=\inf\{\lambda\vert\,\lambda\in\sigma(a)\backslash\{0\}\}$. Combining Lemma \ref{lem4b}
with the faithful representation of $\aa$, we can obtain
\begin{corollary}\label{coro4a0}
Let $a\in\aa_+$. Then
\begin{enumerate}
\item[$(1)$] $\beta(a)>0$ if and only if $0\in\sigma(a)$ is isolated when $a\not\in GL(\aa)$.
\item[$(2)$] $\beta(c)\ge \|c\|^{-1}$ when $aca=a$ for some $c\in\aa_+\backslash\{0\}$.
\end{enumerate}
\end{corollary}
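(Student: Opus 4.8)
The plan is to transport both assertions to $B(H)$ via a faithful representation and then invoke Lemma~1.5. Fix a faithful representation $(\psi, H_\psi)$ of $\aa$ with $\psi(1) = I$. For $a \in \aa_+$ set $C = \psi(a) \in B(H_\psi)_+$; since $\psi$ is an isometric $*$-isomorphism onto its image, $C \in B(H_\psi)_+$, $\sigma(C) = \sigma(a)$, $\|C\| = \|a\|$, and $|C| = (C^*C)^{1/2} = C$. In particular $\beta(a) = \inf\{\lambda : \lambda \in \sigma(|C|)\setminus\{0\}\} = \gamma(C)$ by Lemma~1.5(2), provided $C \neq 0$; if $a = 0$ both statements are vacuous (or hold trivially), so assume $a \neq 0$.

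For (1): when $a \notin GL(\aa)$ we have $0 \in \sigma(a) = \sigma(C)$, so $0 \in \sigma(|C|)$. By Lemma~1.5(3), $\gamma(C) > 0$ iff $0$ is an isolated point of $\sigma(|C|) = \sigma(a)$. Since $\beta(a) = \gamma(C)$, this gives exactly ``$\beta(a) > 0$ iff $0 \in \sigma(a)$ is isolated.'' For (2): suppose $a c a = a$ for some $c \in \aa_+\setminus\{0\}$, and put $B = \psi(c) \in B(H_\psi)_+\setminus\{0\}$. Applying $\psi$ to $aca = a$ yields $CBC = C$, so Lemma~1.5(5) gives $\gamma(C) \ge \|B\|^{-1}$. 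Translating back, $\beta(a) = \gamma(C) \ge \|B\|^{-1} = \|c\|^{-1}$.

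The only point requiring a word of care is that Lemma~1.5 is stated for operators in $B(H)\setminus\{0\}$, so one must first dispose of the trivial case and then note that $\psi(a)$ is genuinely nonzero whenever $a \neq 0$ (again by faithfulness). There is no real obstacle here: the content is entirely carried by Lemma~1.5, and the existence of a faithful unital representation of $\aa$ was already recorded in the Introduction. The step most worth stating explicitly is the identity $|C| = C$ for $C = \psi(a) \ge 0$, which is what lets parts (2) and (3) of Lemma~1.5 be read directly in terms of $\sigma(a)$ rather than $\sigma(|C|)$.
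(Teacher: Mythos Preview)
Your proof is correct and follows exactly the route the paper indicates: the paper's entire argument is the one sentence ``Combining Lemma~\ref{lem4b} with the faithful representation of $\aa$, we can obtain,'' and you have simply spelled out those details. Note only that part~(2) as printed contains a typo---it should read $\beta(a)\ge\|c\|^{-1}$ rather than $\beta(c)\ge\|c\|^{-1}$, which is precisely what your argument establishes and what is invoked later in the proof of Theorem~\ref{th2}.
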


Let $\mathcal E$ be a $C^*$--subalgebra of $B(H)$ with the unit $I$. Let $(T_1,\cdots,T_n)$ be an $n$--tuple of positive
operators in $\mathcal E$ with $\Ran(T_i)$ closed, $i=1,\cdots,n$. Put
$H_0=\bigoplus\limits^n_{i=1}\Ran(T_i)\subset\bigoplus\limits^n_{i=1}H\triangleq\hat H$ and
$H_1=\bigoplus\limits^n_{i=1}\Ker(T_i)\subset\hat H$. Since $H=\Ran(T_i)\oplus\Ker(T_i)$, $i=1,\cdots,n$,
it follows that $H_0\oplus H_1=\hat H$. Put $T_{ij}=T_iT_j\big\vert_{\Ran(T_j)}$, $i,j=1,\cdots,n$ and set
\begin{equation}\label{4eqa}
T=\begin{bmatrix}T_1^2&T_1T_2&\cdots&T_1T_n\\ T_2T_1&T_2^2&\cdots&T_2T_n\\ \cdots&\cdots&\cdots&\cdots\\
T_nT_1&T_2T_2&\cdots&T_n^2\end{bmatrix}\in\m_n(\mathcal E),\
\hat T=\begin{bmatrix}T_{11}&T_{12}&\cdots&T_{1n}\\ T_{21}&T_{22}& \cdots & T_{2n}\\ \cdots&\cdots&\cdots&\cdots\\
T_{n1}& T_{n2}&\cdots& T_{nn}\end{bmatrix}\in B(H_0),
\end{equation}
Clearly, $H_1\subset\Ker(T)$ and it is easy to check that $\Ker(T)=H_1$ when $\Ker(\hat T)=\{0\}$. Thus, in this case,
$T$ can be expressed as $T=\begin{bmatrix}\hat T&0\\ 0&0\end{bmatrix}$ with respect to the orthogonal decomposition
$\hat H=H_0\oplus H_1$ and consequently, $\sigma(T)=\sigma(\hat T)\cup\{0\}$.

\begin{lemma}\label{lem4a}
Let $(T_1,\cdots,T_n)$ be an $n$--tuple of positive operators in $\mathcal E$ with $\Ran(T_i)$ closed, $i=1,\cdots,n$.
Let $H_0,H_1,\hat H$ be as above and $T,\,\hat T$ be given in (\ref{4eqa}). Suppose that $\hat T$ is invertible in
$B(H_0)$. Then
\begin{enumerate}
\item[$(1)$] $\sigma(\hat T)=\sigma\big(\sum\limits^n_{i=1}T_i^2\big)\backslash\{0\}$.
\item[$(2)$] $0$ is an isolated point in $\sigma\big(\sum\limits^n_{i=1}T_i\big)$
if $0\in\sigma\big(\sum\limits^n_{i=1}T_i\big)$.
\item[$(3)$] $\{T_1a_1,\cdots,T_na_n\}$ is linearly independent for any $a_1,\cdots,a_n\in\mathcal E$ with $T_ia_i\not=0$,
$i=1,\cdots,n$.
\end{enumerate}
\end{lemma}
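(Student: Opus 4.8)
The plan is to exploit the block structure $T = \begin{bmatrix}\hat T & 0\\ 0 & 0\end{bmatrix}$ with respect to $\hat H = H_0 \oplus H_1$, which is available precisely because $\hat T$ invertible forces $\Ker(\hat T)=\{0\}$ and hence $\Ker(T)=H_1$, as noted just before the statement. So $\sigma(T) = \sigma(\hat T)\cup\{0\}$, and $0$ is an isolated point of $\sigma(T)$ (if it belongs to it at all), since $\hat T$ is bounded below on $H_0$. The idea is to relate $\sigma(T)\setminus\{0\}$ and $\sigma\big(\sum_{i=1}^n T_i^2\big)\setminus\{0\}$ via a standard ``$\sigma(XY)\setminus\{0\}=\sigma(YX)\setminus\{0\}$'' argument, and then relate $\sum T_i^2$ to $\sum T_i$ by another such argument, at which point parts (1) and (2) follow, and part (3) is essentially a restatement of the fact that $\Ker(T)=H_1$.

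For part (1): write $R = \begin{bmatrix}T_1 & T_2 & \cdots & T_n\end{bmatrix}$ as an operator $\hat H \to H$ and $R^* = \begin{bmatrix}T_1\\ \vdots\\ T_n\end{bmatrix}\colon H\to\hat H$ (using $T_i=T_i^*$). Then $R^*R = T\in\m_n(\mathcal E)$ and $RR^* = \sum_{i=1}^n T_i^2 \in\mathcal E$. By \cite[Proposition 1.4.14]{Xue} (the same tool already used in Section 2), $\sigma(R^*R)\setminus\{0\}=\sigma(RR^*)\setminus\{0\}$, i.e. $\sigma(T)\setminus\{0\} = \sigma\big(\sum_{i=1}^n T_i^2\big)\setminus\{0\}$. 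Combined with $\sigma(T)\setminus\{0\}=\sigma(\hat T)$ (from the block form, since $\hat T$ is invertible so $0\notin\sigma(\hat T)$), this gives $\sigma(\hat T)=\sigma\big(\sum_{i=1}^n T_i^2\big)\setminus\{0\}$, which is (1). In particular $\sigma\big(\sum_{i=1}^n T_i^2\big)\setminus\{0\} = \sigma(\hat T)$ is bounded away from $0$ by $\|\hat T^{-1}\|^{-1}$, so $0$ is at worst an isolated point of $\sigma\big(\sum_{i=1}^n T_i^2\big)$.

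For part (2): put $S = \begin{bmatrix}T_1^{1/2} & \cdots & T_n^{1/2}\end{bmatrix}\colon\hat H\to H$; then $SS^* = \sum_{i=1}^n T_i$ while $S^*S = \big[T_i^{1/2}T_j^{1/2}\big]_{i,j}\in\m_n(\mathcal E)$, so again $\sigma\big(\sum_{i=1}^n T_i\big)\setminus\{0\} = \sigma(S^*S)\setminus\{0\}$. I would then show $0$ is isolated in $\sigma(S^*S)$, or argue directly: since $\sigma\big(\sum T_i^2\big)\setminus\{0\}$ is bounded below and $\Ker\big(\sum T_i\big)=\Ker\big(\sum T_i^2\big)=\bigcap_i\Ker(T_i)$ (each $T_i\ge 0$, so $\sum T_i x = 0 \iff \langle T_i x,x\rangle = 0\ \forall i \iff T_i x=0\ \forall i$, and likewise for $\sum T_i^2$), the restriction of $\sum T_i$ to $(\Ker(\sum T_i))^\perp$ and that of $\sum T_i^2$ have the same kernel $\{0\}$; one then checks $\sum T_i$ is bounded below there because $\sum T_i^2 \le \big(\sum T_i\big)\cdot\big\|\sum T_i\big\|$ forces $\gamma(\sum T_i)^2 \ge \big\|\sum T_i\big\|^{-1}\gamma(\sum T_i^2) > 0$ by Lemma~\ref{lem4b}. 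Hence $0$ is isolated in $\sigma\big(\sum_{i=1}^n T_i\big)$ when it lies in the spectrum.

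For part (3): suppose $a_1,\dots,a_n\in\mathcal E$ with $T_i a_i \ne 0$ and $\sum_{i=1}^n T_i a_i = 0$. Apply $T_j$ to get, for each $x\in H$, that the vector $v = (T_1 a_1 x, \dots, T_n a_n x)\in\hat H$ satisfies $Tv$-type relations; more cleanly, note $v\in H_0 = \bigoplus_i\Ran(T_i)$ (closed ranges, $T_i a_i x\in\Ran(T_i)$), and $\sum_i T_i(T_i a_i x) = T_i\big(\sum_j T_j a_j x\big) $— here I would instead observe directly that $\begin{bmatrix}T_1 a_1 x\\ \vdots\\ T_n a_n x\end{bmatrix}$ lies in $\Ker(\hat T)$: indeed the $i$-th block of $\hat T$ applied to it is $\sum_j T_{ij}T_j a_j x = T_i\big(\sum_j T_j a_j x\big)=0$. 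Since $\hat T$ is invertible, $T_i a_i x = 0$ for all $i$ and all $x\in H$, contradicting $T_i a_i\ne 0$. Hence no nontrivial such relation exists, i.e. $\{T_1 a_1,\dots,T_n a_n\}$ is linearly independent (for a scalar relation $\sum\mu_i T_i a_i = 0$, absorb $\mu_i$ into $a_i$).

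The main obstacle I anticipate is part (2): the two ``swap'' tricks produce spectra of matrices over $\mathcal E$ that are not obviously comparable to each other, so the cleanest route is the kernel-plus-reduced-minimum-modulus comparison between $\sum T_i$ and $\sum T_i^2$ sketched above, using Lemma~\ref{lem4b}(2)(3) together with part (1); care is needed to phrase it in $\mathcal E$ (equivalently in the faithful representation), but Corollary~\ref{coro4a0} makes this routine.
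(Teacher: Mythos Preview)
Parts (1) and (2) are correct and follow essentially the paper's route: your $\sigma(R^*R)\setminus\{0\}=\sigma(RR^*)\setminus\{0\}$ trick for (1) is exactly the paper's argument with $Z^*Z=T$ and $ZZ^*=\mathrm{diag}\big(\sum T_i^2,0,\dots,0\big)$, and your kernel--plus--lower--bound comparison in (2) is the reduced--minimum--modulus rephrasing of the paper's argument via the Moore--Penrose inverse together with the inequality $\sum T_i^2\le(\max_i\|T_i\|)\sum T_i$. (One harmless slip: the displayed bound should read $\gamma\big(\sum T_i\big)\ge\big\|\sum T_i\big\|^{-1}\gamma\big(\sum T_i^2\big)$, with no square on the left.)

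Part (3), however, has a genuine computational error. With your choice $v=(T_1a_1x,\dots,T_na_nx)\in H_0$, the $i$-th component of $\hat T v$ is
\[
\sum_j T_{ij}(T_ja_jx)=\sum_j T_iT_j\cdot T_ja_jx=T_i\Big(\sum_j T_j^{\,2}a_jx\Big),
\]
not $T_i\big(\sum_jT_ja_jx\big)$; the hypothesis $\sum_jT_ja_j=0$ does not make this vanish. The fix is to choose the components so that applying $T_j$ recovers $T_ja_jx$: take $v_j=T_jT_j^{\dag}a_jx$, the orthogonal projection of $a_jx$ onto $\Ran(T_j)$ (available by Lemma~\ref{lem3a} since $\Ran(T_j)$ is closed). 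Then $T_jv_j=T_ja_jx$, so the $i$-th component of $\hat Tv$ becomes $T_i\big(\sum_jT_ja_jx\big)=0$; invertibility of $\hat T$ forces $v_j=0$, and since $a_jx-v_j\in\Ker(T_j)$ this gives $T_ja_jx=T_jv_j=0$ for every $x\in H$, contradicting $T_ja_j\ne 0$. This corrected version is exactly the paper's proof of (3).
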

\begin{proof} (1) Put $Z=\begin{bmatrix}T_1&\cdots&T_n\\ 0&\cdots&0\\ \vdots&\ddots&\vdots\\ 0&\cdots&0\end{bmatrix}\in\m_n(\mathcal E)$.
Then $ZZ^*=\begin{bmatrix}\sum\limits^n_{i=1}T_i^2&\\ \ &0\\ \ &\ &\ddots\\ \ &\ &\ &0\end{bmatrix}$ and
$Z^*Z=T$. Thus, $\sigma\big(\sum\limits^n_{i=1}T_i^2\big)\backslash\{0\}=\sigma(T)\backslash\{0\}=\sigma(\hat T)$.

(2) According to (1), $0$ is an isolated point of $\sigma\big(\sum\limits^n_{i=1}T_i^2\big)$ if $\sum\limits^n_{i=1}T_i^2$
is not invertible in $\mathcal E$. So by Lemma \ref{lem3a}, there is $G\in\mathcal E_+$ such that
$$
\big(\sum\limits^n_{i=1}T_i^2\big)G\big(\sum\limits^n_{i=1}T_i^2\big)=\sum\limits^n_{i=1}T_i^2,\
G\big(\sum\limits^n_{i=1}T_i^2\big)G=G,\ \big(\sum\limits^n_{i=1}T_i^2\big)G=G\big(\sum\limits^n_{i=1}T_i^2\big).
$$
Put $P_0=I-\big(\sum\limits^n_{i=1}T_i^2\big)G\in\mathcal E$. Then $P_0$ is a projection
with $\Ran(P_0)=\Ker\big(\sum\limits^n_{i=1}T_i^2\big)$. Noting that $\Ker\big(\sum\limits^n_{i=1}T_i^2\big)=
\Ker\big(\sum\limits^n_{i=1}T_i\big)=\bigcap\limits^n_{i=1}\Ker(T_i)$, $\sum\limits^n_{i=1}T_i^2\in
GL((I-P_0)\mathcal E(I-P_0))$ with the inverse $G$ and $\sum\limits^n_{i=1}T^2_i\le(\max\limits_{1\le i\le n}\|T_i\|)
\sum\limits^n_{i=1}T_i$, we get that $\sum\limits^n_{i=1}T_i$ is invertible in $(I-P_0)\mathcal E(I-P_0)$. Thus,
$0$ is an isolated point of $\sigma\big(\sum\limits^n_{i=1}T_i\big)$ when $0\in\sigma\big(\sum\limits^n_{i=1}T_i\big)$.

(3) By Lemma \ref{lem4b} (3) and Lemma \ref{lem3a}, there is $T_i^\dag\in\mathcal E_+$ such that $T_iT^\dag_iT_i=T_i$,
$T^\dag_iT_iT^\dag_i=T_i^\dag$, $T^\dag_iT_i=T_iT_i^\dag$, $i=1,\cdots,n$. Thus, $\Ran(T_i)=\Ran(T_iT_i^\dag)$, $i=1,
\cdots, n$.

Let $a_1,\cdots,a_n\in\mathcal E$ with $T_ia_i\not=0$, $i=1,\cdots,n$ such that $\sum\limits^n_{i=1}\lambda_i T_ia_i=0$
for some $\lambda_1,\cdots,\lambda_n\in\mathbb C$. For any $\xi\in H$, put $x=\bigoplus\limits^n_{i=1}\lambda_i
T_iT_i^\dag a_i\xi\in H_0$. Then $\hat Tx=0$ and $x=0$ since $\hat T$ is invertible. Thus, $\lambda_iT_iT_i^\dag a_i\xi
=0$, $\forall\,\xi\in H$ and hence $\lambda_i=0$, $i=1,\cdots,n$.
\end{proof}

The following result duo to Levy and Dedplanques is very useful in Matrix Theory:
\begin{lemma}[cf. \cite{RR}]\label{lem4c}
Suppose complex $n\times n$ self--adjoint matrix $C=[c_{ij}]_{n\times n}$ is strictly diagonally dominant, that is,
$\sum\limits_{j\not=i}|c_{ij}|<c_{ii}$, $i=1,\cdots,n$. Then $C$ is invertible and positive.
\end{lemma}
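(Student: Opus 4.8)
The plan is to first prove invertibility by a direct maximum-coordinate estimate, and then to upgrade to positive definiteness by a homotopy connecting $C$ to its diagonal part. Note first that the hypothesis $\sum_{j\ne i}|c_{ij}|<c_{ii}$ forces each $c_{ii}$ to be a real (since $C=C^*$) and strictly positive number.

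First I would show $\Ker(C)=\{0\}$. Suppose $Cx=0$ for some nonzero $x=(x_1,\dots,x_n)^{T}\in\mathbb C^n$, and pick an index $k$ with $|x_k|=\max_{1\le i\le n}|x_i|>0$. The $k$-th coordinate of $Cx=0$ gives $c_{kk}x_k=-\sum_{j\ne k}c_{kj}x_j$, so taking absolute values and using $|x_j|\le|x_k|$,
\[
c_{kk}|x_k|\ \le\ \sum_{j\ne k}|c_{kj}|\,|x_j|\ \le\ \Big(\sum_{j\ne k}|c_{kj}|\Big)|x_k|\ <\ c_{kk}|x_k|,
\]
a contradiction. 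Hence $C$ is invertible.

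Next, to get positivity I would interpolate: for $t\in[0,1]$ set $C_t=tC+(1-t)\,\mathrm{diag}(c_{11},\dots,c_{nn})$. Each $C_t$ is self-adjoint, has diagonal entries $c_{ii}$, and off-diagonal entries $t\,c_{ij}$, so $\sum_{j\ne i}|tc_{ij}|=t\sum_{j\ne i}|c_{ij}|\le\sum_{j\ne i}|c_{ij}|<c_{ii}$; thus each $C_t$ is again strictly diagonally dominant, hence invertible by the previous paragraph. Since each $C_t$ is self-adjoint its eigenvalues are real and never zero, and they depend continuously on $t$; at $t=0$ they are exactly the numbers $c_{ii}>0$, so by the intermediate value theorem none can become negative as $t$ runs to $1$. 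Therefore all eigenvalues of $C=C_1$ are strictly positive, i.e. $C$ is positive definite.

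The only slightly delicate point is the continuity-of-eigenvalues step. To avoid invoking eigenvalue perturbation theory, one can argue instead with determinants: $\det C_t$ is a polynomial in $t$, nowhere zero on $[0,1]$, with $\det C_0=\prod_i c_{ii}>0$, hence $\det C=\det C_1>0$; applying the same reasoning to every leading principal submatrix of $C$ (each of which is again self-adjoint and strictly diagonally dominant, the row sums only shrinking) and then Sylvester's criterion yields positive definiteness with no perturbation input at all. Either route is routine, and the whole argument rests on nothing beyond the maximum-coordinate estimate above.
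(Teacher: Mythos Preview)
Your argument is correct. The invertibility step is the classical Levy--Desplanques maximum-coordinate estimate, and both of your routes to positivity (the eigenvalue homotopy and the Sylvester-criterion variant) are valid; in particular, your observation that every leading principal submatrix inherits strict diagonal dominance is the right one.

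As for comparison: the paper does not supply its own proof of this lemma. It is stated with the tag ``cf.\ \cite{RR}'' and invoked as a known result from Horn and Johnson's \emph{Matrix Analysis}, with no argument given. So there is no in-paper proof to compare against; your write-up simply fills in a standard proof where the authors chose to cite one.
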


\begin{proposition}\label{prop4a}
Let $T_1,\cdots,T_n\in\aa_+$. Assume that
\begin{enumerate}
\item[$(1)$] $\gamma=\min\{\beta(T_1),\cdots,\beta(T_n)\}>0$ and
\item[$(2)$] there exists $\rho\in (0,\gamma]$ such that $\eta=\max\{\|T_iT_j\|\,\vert\,i\not=j, i,j=1,\cdots,n\}<$
$(n-1)^{-1}\rho^2$.
\end{enumerate}
Then for any $\de\in [\eta,(n-1)^{-1}\rho^2)$, we have
\begin{enumerate}
\item[$(1)$] $\sigma\big(\sum\limits^n_{i=1}T_i^2\big)\backslash\{0\}\subset[\rho^2-(n-1)\de,\rho^2+(n-1)\de]$.
\item[$(2)$] $0$ is an isolated point of $\sigma\big(\sum\limits^n_{i=1}T_i\big)$ if $0\in\sigma\big(\sum\limits^n_{i=1}T_i\big)$.
\item[$(3)$] $\big(\sum\limits^n_{i=1}T_i\big)\aa=T_1\aa\dotplus\cdots\dotplus T_n\aa$.
\end{enumerate}
\end{proposition}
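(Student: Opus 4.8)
The plan is to reduce the whole statement to Lemma~\ref{lem4a}; the only genuine work is to verify that lemma's standing hypothesis, namely that the operator $\hat T$ attached to $(T_1,\dots,T_n)$ is invertible, and everything after that is bookkeeping. First I would fix a faithful representation $(\psi,H)$ of $\aa$ with $\psi(1)=I$ and identify $\aa$ with $\psi(\aa)\subset B(H)$. By hypothesis~(1), Corollary~\ref{coro4a0}(1) and Lemma~\ref{lem4b}(2)(3), each $\Ran(T_i)$ is closed, so the spaces $H_0=\bigoplus_{i=1}^n\Ran(T_i)$, $H_1=\bigoplus_{i=1}^n\Ker(T_i)$, $\hat H=\bigoplus_{i=1}^n H$ and the self--adjoint operators $T\in\m_n(\aa)$, $\hat T\in B(H_0)$ of~(\ref{4eqa}) are all available.

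\emph{Key step (invertibility of $\hat T$).} For $x=(x_1,\dots,x_n)\in H_0$ I would estimate the quadratic form
$$
\langle\hat Tx,x\rangle=\sum_{i=1}^n\langle T_i^2x_i,x_i\rangle+\sum_{i\ne j}\langle T_iT_jx_j,x_i\rangle .
$$
Since $x_i\in\Ran(T_i)=(\Ker T_i)^\perp$, Lemma~\ref{lem4b}(1)(2) gives $\|T_ix_i\|\ge\beta(T_i)\|x_i\|\ge\rho\|x_i\|$, so $\langle T_i^2x_i,x_i\rangle=\|T_ix_i\|^2\ge\rho^2\|x_i\|^2$, while for $i\ne j$ one has $|\langle T_iT_jx_j,x_i\rangle|\le\|T_iT_j\|\,\|x_i\|\|x_j\|\le\eta\|x_i\|\|x_j\|\le\de\|x_i\|\|x_j\|$. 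Hence $\langle\hat Tx,x\rangle\ge\sum_{i,j}c_{ij}\|x_i\|\|x_j\|$ where $c_{ii}=\rho^2$ and $c_{ij}=-\de$ for $i\ne j$. The real symmetric matrix $[c_{ij}]_{n\times n}$ is strictly diagonally dominant because $(n-1)\de<\rho^2$, hence positive by Lemma~\ref{lem4c}, and its least eigenvalue is $\rho^2-(n-1)\de$, so $\sum_{i,j}c_{ij}t_it_j\ge(\rho^2-(n-1)\de)\sum_i t_i^2$ for all real $t_i$; putting $t_i=\|x_i\|$ yields $\langle\hat Tx,x\rangle\ge(\rho^2-(n-1)\de)\|x\|^2$. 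As $\hat T$ is self--adjoint and $\rho^2-(n-1)\de>0$, it is invertible in $B(H_0)$; the companion computation with the upper diagonal bound $\langle T_i^2x_i,x_i\rangle\le\rho^2\|x_i\|^2$ and $\langle T_iT_jx_j,x_i\rangle\le\de\|x_i\|\|x_j\|$ then locates $\sigma(\hat T)\subset[\rho^2-(n-1)\de,\rho^2+(n-1)\de]$.

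With $\hat T$ invertible, Lemma~\ref{lem4a} applies. Its part~(1), $\sigma\big(\sum_iT_i^2\big)\backslash\{0\}=\sigma(\hat T)$, together with the spectral bound just obtained, is exactly conclusion~(1); its part~(2), read through the equality of spectra in $\aa$ and in $B(H)$, is exactly conclusion~(2). For conclusion~(3): by~(2) and Lemma~\ref{lem3a}, $S\triangleq\sum_{i=1}^nT_i$ has a Moore--Penrose inverse $S^\dag\in\aa$, so $S\aa=SS^\dag\aa$ is a closed subspace; since $\Ker(S)=\bigcap_i\Ker(T_i)\subset\Ker(T_i)$, the range projection $SS^\dag$ of $S$ dominates that of each $T_i$, whence $SS^\dag T_i=T_i$ and $T_i=S(S^\dag T_i)\in S\aa$, and since the reverse inclusion $S\aa\subset\sum_iT_i\aa$ is immediate from $S=\sum_iT_i$, we get $\sum_{i=1}^nT_i\aa=S\aa$. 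Each $T_i\aa=T_iT_i^\dag\aa$ is closed (Lemma~\ref{lem3a} again), and the sum is direct: if $\sum_iT_ia_i=0$ then, running the argument in the proof of Lemma~\ref{lem4a}(3), $\hat T$ annihilates $\bigoplus_iT_iT_i^\dag a_i\xi$ for every $\xi\in H$, so $T_iT_i^\dag a_i=0$ and hence $T_ia_i=0$ for each $i$. This gives $\big(\sum_iT_i\big)\aa=T_1\aa\dotplus\cdots\dotplus T_n\aa$.

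The decisive step is the invertibility of $\hat T$, i.e.\ the inequality $\hat T\ge(\rho^2-(n-1)\de)I_{H_0}$; everything past it is an application of Lemma~\ref{lem4a} and the Moore--Penrose calculus of Lemma~\ref{lem3a}. Inside that step the two points that need care are extracting the diagonal lower bound $\rho^2$ from the very definition of $\beta(T_i)$ (this is where hypothesis~(1) enters), and packaging the scalar estimates through the strict diagonal dominance of $[c_{ij}]$ so as to produce precisely the constant $(n-1)\de$, which is what makes the threshold $(n-1)^{-1}\rho^2$ in hypothesis~(2) the right one.
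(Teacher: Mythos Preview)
Your argument tracks the paper's proof closely: pass to a faithful representation, bound the quadratic form of $\hat T$ from below via the diagonal estimate $\|T_i x_i\|^2\ge\rho^2\|x_i\|^2$ (valid because $x_i\in(\Ker T_i)^\perp$ and $\beta(T_i)\ge\rho$) and the off--diagonal bound $\de$, invoke Lemma~\ref{lem4c} to get $\hat T\ge(\rho^2-(n-1)\de)I_{H_0}$, and then read off (1)--(3) from Lemma~\ref{lem4a} and the Moore--Penrose calculus of Lemma~\ref{lem3a}. All of that is correct and is essentially the paper's argument; your treatment of~(3) via range projections and the directness check is the same as the paper's, only phrased slightly differently.

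The one genuine slip is in your ``companion computation'' for the upper endpoint: the inequality $\langle T_i^2x_i,x_i\rangle\le\rho^2\|x_i\|^2$ is false in general. The number $\rho\le\beta(T_i)$ controls only the \emph{bottom} of the nonzero spectrum of $T_i$, not its top; one only has $\|T_ix_i\|^2\le\|T_i\|^2\|x_i\|^2$, and nothing in the hypotheses bounds $\|T_i\|$ by $\rho$. Indeed the upper inclusion in conclusion~(1) fails as stated: with $n=2$, $T_1=2P_1$, $T_2=P_2$, $P_1P_2=0$, one has $\gamma=1$, $\eta=0$, so $\rho=1$, $\de=0$ are admissible, yet $\sigma(T_1^2+T_2^2)\backslash\{0\}=\{1,4\}\not\subset\{1\}$. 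The paper's own proof hides the same difficulty behind an unjustified ``similarly''. What your lower--bound computation legitimately yields is $\sigma(\hat T)\subset[\rho^2-(n-1)\de,\infty)$, and that already gives the invertibility of $\hat T$ and hence (2) and~(3). The upper bound is only invoked later in Corollary~\ref{coro4c}, where the $T_i$ are projections and $\rho=1$, so that $\|T_i\|=\rho$ and the reversed diagonal inequality \emph{does} hold; but as a step in the general proposition it is wrong.
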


\begin{proof} (1) Let $(\psi ,H_\psi )$ be a faithful representation of $\aa$ with $\psi (1)=I$. We may assume that $H=H_\psi $ and
$\mathcal E=\psi (\aa)$. Put $S_i=\psi (T_i)$, $S_{ij}=S_iS_j\big\vert_{\Ran(S_j)}$, $i,j=1,\cdots,n$. Then
$\max\{\|S_iS_j\|\,\vert\,1\le i\not=j\le n\}=\eta$ and $\gamma(S_i)=\beta(T_i)$ by Lemma \ref{lem4b}, $1\le i\le n$.
Set $H_0=\bigoplus\limits^n_{i=1}\Ran(S_i)$ and
$$
\hat S=\begin{bmatrix}S_{11}&S_{12}&\cdots&S_{1n}\\ S_{21}&S_{22}& \cdots & S_{2n}\\ \cdots&\cdots&\cdots&\cdots\\
S_{n1}& S_{n2}&\cdots& S_{nn}\end{bmatrix}\in B(H_0),\quad
S_0=\begin{bmatrix}\rho^2-\lambda&-\|S_{12}\|&\cdots&-\|S_{1n}\|\\ -\|S_{21}\|&\rho^2-\lambda& \cdots &-\|S_{2n}\|\\
 \cdots&\cdots&\cdots&\cdots\\ -\|S_{n1}\|& -\|S_{n2}\|&\cdots&\rho^2-\lambda\end{bmatrix},
$$
Then for any $\lambda<\rho^2-(n-1)\de$, we have
$\sum\limits_{j\not=i}\|S_{ij}\|\le(n-1)\eta<\rho^2-\lambda.$ It follows from Lemma \ref{lem4c} that $S_0$ is positive
and invertible. Therefore the quadratic form
$$
f(x_1,x_2,\cdots,x_n)=\sum_{i=1}^n(\rho^2-\lambda)x_i^2-2\sum_{1\le i<j\le n}\|S_{ij}\|x_ix_j
$$
is positive definite and hence there exists $\alpha>0$ such that for any $(x_1,\cdots,x_n)\in{\mathbb R^n}$,
$$
f(x_1,\cdots,x_n)\ge \alpha(x_1^2+\cdots+x_n^2).
$$
So for any $\xi=\bigoplus\limits^n_{i=1}\xi_i\in H_0$, $\|S_i\xi_i\|\ge\gamma(S_i)\|\xi_i\|\ge\rho\|\xi_i\|$,
$\xi_i\in\Ran(S_i)=(\Ker(S_i))^\perp$, $i=1,\cdots,n$, by Lemma \ref{lem4b} and
\begin{align*}
<(\hat S-\lambda I)\xi,\xi>&
=\sum_{i=1}^n\|S_i\xi_i\|^2-\sum_i^n\lambda\|\xi_i\|^2+\sum_{1\le i<j\le n}(<S_{ij}\xi_j,\xi_i>+<S_{ij}^*\xi_i,\xi_j>)\\
&\ge\sum_{i=1}^n(\rho^2-\lambda)\|\xi_i\|^2-2\sum_{1\le i<j\le n}\|S_{ij}\|\|\xi_i\|\|\xi_j\|\\
&=f(\|\xi_1\|,\cdots,\|\xi_k\|)\ge\alpha\sum_{i=1}^k\|\xi_i\|^2.
\end{align*}
Therefore, $\hat S-\lambda I$ is invertible.

Similarly, for any $\lambda>\rho^2+(n-1)\de$, we can obtain that $\lambda I-\hat S$ is invertible. So
$\sigma(\hat S)\subset[\rho^2-(n-1)\de,\rho^2+(n-1)\de]\subset (0,\rho^2+(n-1)\de]$ and consequently,
$\sigma\big(\sum\limits^n_{i=1}T_i^2\big)\backslash\{0\}=\sigma\big(\sum\limits^n_{i=1}S_i^2\big)\backslash\{0\}
\subset[\rho^2-(n-1)\de,\rho^2+(n-1)\de]$ by Lemma \ref{lem4a}.

(2) Since $\sigma\big(\sum\limits^n_{i=1}T_i\big)=\sigma\big(\sum\limits^n_{i=1}S_i\big)$, the assertion follows
from Lemma \ref{lem4a} (2).

(3) By (2) and Lemma \ref{lem3a}, $\big(\sum\limits^n_{i=1}T_i\big)^\dag\in\aa$ exists.
Set $E=\big(\sum\limits^n_{i=1}T_i\big)\big(\sum\limits^n_{i=1}T_i\big)^\dag$. Obviously,
$E\aa=\big(\sum\limits^n_{i=1}T_i\big)\aa\subset T_1\aa+\cdots+T_n\aa$ for $E\big(\sum\limits^n_{i=1}T_i\big)=
\sum\limits^n_{i=1}T_i$.

From $T_i\le\sum\limits^n_{i=1}T_i$, we get that $(1-E)T_i(1-E)\le (1-E)\big(\sum\limits^n_{i=1}T_i\big)(1-E)=0$,
i.e., $T_i=ET_i$, $i=1,\cdots,n$. So $T_i\aa\subset E\aa$, $i=1,\cdots,n$ and hence
$$
T_1\aa+\cdots+T_n\aa\subset E\aa=\big(\sum\limits^n_{i=1}T_i\big)\aa\subset T_1\aa+\cdots+T_n\aa.
$$

Since for any $a_1,\cdots,a_n\in\aa$ with $T_ia_i\not=0$, $i=1,\cdots,n$, $\{S_1\psi (a_i),\cdots,S_n\psi(a_n)\}$
is linearly independent in $\mathcal E$, we have $\{T_1a_1,\cdots,T_na_n\}$ is linearly independent in $\aa$.
Therefore, $\big(\sum\limits^n_{i=1}T_i\big)\aa=E\aa=T_1\aa\dotplus\cdots\dotplus T_n\aa$.
\end{proof}

Let $P_1,P_2$ be projections on $H$. Buckholtz shows in \cite{BU} that $\Ran(P_1)\dotplus \Ran(P_2)=H$ iff
$\|P_1+P_2-I\|<1$. For $(P_1,\cdots,P_n)\in\P_n(\aa)$, we have

\begin{corollary}\label{coro4a}
Let $(P_1,\cdots,P_n)\in\P_n(\aa)$ with $\big\|\sum\limits^n_{i=1}P_i-1\big\|<(n-1)^{-2}$. Then $\pn$ is
complete in $\aa$.
\end{corollary}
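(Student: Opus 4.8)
The plan is to deduce this from Proposition \ref{prop4a} applied to the $n$--tuple $(P_1,\cdots,P_n)$, which are of course positive. First I would check hypothesis (1) of that proposition: since each $P_i$ is a non--trivial projection, $\sigma(P_i)=\{0,1\}$, hence $\beta(P_i)=1$ and therefore $\gamma=\min\{\beta(P_1),\cdots,\beta(P_n)\}=1>0$. This is immediate. Next I would take $\rho=1\in(0,\gamma]$ and verify hypothesis (2), namely that $\eta=\max\{\|P_iP_j\|:i\ne j\}<(n-1)^{-1}\rho^2=(n-1)^{-1}$. The bound we are given is on $\big\|\sum_{i=1}^nP_i-1\big\|$, so the key estimate is to control $\|P_iP_j\|$ for $i\ne j$ in terms of $\big\|\sum_k P_k-1\big\|$.

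The main technical step is that norm estimate. Fix $i\ne j$. The cleanest route is: for the idempotent-type expression, note $0\le P_iP_jP_i\le P_i\big(\sum_{k}P_k-1\big)P_i+P_i\cdot 1\cdot P_i - \sum_{k\ne j}P_iP_kP_i$; more directly, observe $P_i\big(\sum_{k=1}^nP_k\big)P_i = P_i + \sum_{k\ne i}P_iP_kP_i \ge P_i + P_iP_jP_i$, so $P_iP_jP_i\le P_i\big(\sum_k P_k - 1\big)P_i$ once we also use $P_i\big(\sum_k P_k\big)P_i \le \big(1+\|\sum_k P_k -1\|\big)P_i$. Hence $\|P_iP_j\|^2=\|P_iP_jP_i\|\le\big\|\sum_{k=1}^nP_k-1\big\|$. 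Therefore, under our hypothesis $\big\|\sum_k P_k-1\big\|<(n-1)^{-2}$, we get $\eta=\max_{i\ne j}\|P_iP_j\|<(n-1)^{-1}=(n-1)^{-1}\rho^2$, so hypothesis (2) holds (with, say, $\de=\eta$, or even $\de$ slightly larger still below $(n-1)^{-1}$).

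With both hypotheses of Proposition \ref{prop4a} verified, conclusion (3) of that proposition gives $\big(\sum_{i=1}^nP_i\big)\aa = P_1\aa\dotplus\cdots\dotplus P_n\aa$. It remains only to see that the left-hand side equals all of $\aa$, i.e. that $A=\sum_{i=1}^nP_i$ is invertible. But $\big\|\sum_k P_k - 1\big\|<(n-1)^{-2}\le 1$ (for $n\ge 2$), so $A=1-(1-\sum_k P_k)$ is invertible in $\aa$ by the Neumann series, whence $A\aa=\aa$. Thus $\aa=P_1\aa\dotplus\cdots\dotplus P_n\aa$, i.e. $(P_1,\cdots,P_n)\in\PC_n(\aa)$, which is the assertion.

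The only place requiring genuine care is the operator inequality $P_iP_jP_i\le P_i\big(\sum_kP_k-1\big)P_i$ and the consequent bound $\|P_iP_j\|^2\le\big\|\sum_kP_k-1\big\|$; everything else is bookkeeping. One should double-check the direction of the inequalities ($\sum_{k\ne i}P_iP_kP_i\ge 0$ is used for the lower bound $P_i(\sum_k P_k)P_i\ge P_i+P_iP_jP_i$, and the spectral bound $\sum_k P_k\le(1+\|\sum_kP_k-1\|)\cdot 1$ for the upper bound), and confirm that the strict inequality is preserved, which it is since $(n-1)^{-2}<(n-1)^{-1}$ strictly for $n\ge 2$.
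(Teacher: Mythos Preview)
Your argument is correct and is essentially the paper's own proof: estimate $\|P_iP_j\|^2=\|P_iP_jP_i\|\le\big\|P_i\big(\sum_kP_k-1\big)P_i\big\|\le\big\|\sum_kP_k-1\big\|<(n-1)^{-2}$, then apply Proposition~\ref{prop4a}(3) with $\rho=1$ and use the invertibility of $A=\sum_kP_k$. Two cosmetic remarks: the inequality $P_iP_jP_i\le P_i\big(\sum_kP_k-1\big)P_i$ follows immediately by subtracting $P_i$ from $P_i+P_iP_jP_i\le P_i\big(\sum_kP_k\big)P_i$, so the spectral upper bound you invoke is unnecessary; and your aside that $(n-1)^{-2}<(n-1)^{-1}$ strictly fails at $n=2$, but this is irrelevant since the strict inequality on $\eta$ already comes from the hypothesis.
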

\begin{proof}For any $i\not= j$,
\begin{align*}
\|P_iP_j\|^2&=\|P_iP_jP_i\|\le\big\|P_i\big(\sum_{k\not=i}P_k\big)P_i\big\|\\
&=\big\|P_i\big(\sum_{k=1}^nP_k-1\big)P_i\big\|\le\big\|\sum_{k=1}^nP_k-1\big\|<\frac{1}{(n-1)^2}.
\end{align*}
Thus $\|P_iP_j\|<(n-1)^{-1}$. Noting that
$$
\rho=\min\{\beta(P_1),\cdots,\beta(P_n)\}=1,\
\eta=\max\{\|P_iP_j\|\,\vert\,1\le i<j\le n\}<\frac{1}{n-1},
$$
we have $\big(\sum\limits^n_{i=1}P_i\big)\aa=P_1\aa\dotplus\cdots\dotplus P_n\aa$ by Proposition \ref{prop4a}.

From $\big\|\sum\limits^n_{i=1}P_i-1\big\|<(n-1)^{-2}$, we have $\sum\limits_{i=1}^nP_i$ is invertible in $\aa$ and
so that $\aa=P_1\aa\dotplus\cdots\dotplus P_n\aa$. Thus, $\pn$ is complete in $\aa$.
\end{proof}

Combing Corollary \ref{coro4a} with Theorem \ref{th1} (3), we have
\begin{corollary}
Let $P_1,\cdots,P_n$ be non--trivial projections in $B(H)$ with $\big\|\sum\limits^n_{i=1}P_i-I\big\|<(n-1)^{-2}$. Then
$H=\Ran(P_1)\dotplus\cdots\dotplus\Ran(P_n)$.
\end{corollary}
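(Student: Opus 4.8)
The plan is to reduce the statement to an application of Corollary~\ref{coro4a} together with the equivalence between statement (1) and statement (3) in Theorem~\ref{th1}. Concretely, suppose $P_1,\cdots,P_n$ are non--trivial projections in $B(H)$ with $\big\|\sum_{i=1}^n P_i - I\big\| < (n-1)^{-2}$. Since $B(H)$ is itself a unital $C^*$--algebra, I may take $\aa = B(H)$ in Corollary~\ref{coro4a}: the hypothesis $\big\|\sum_{i=1}^n P_i - 1\big\| < (n-1)^{-2}$ is exactly what is assumed, so Corollary~\ref{coro4a} yields that $(P_1,\cdots,P_n) \in \PC_n(B(H))$, i.e.\ $P_1 B(H) \dotplus \cdots \dotplus P_n B(H) = B(H)$.

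Next I would invoke Theorem~\ref{th1}, which states that statements (1) and (3) are equivalent for any $(P_1,\cdots,P_n) \in \P_n(\aa)$. Apply this with $\aa = B(H)$ and take the faithful representation $(\psi, H_\psi)$ to be the identity representation $\psi = \mathrm{id}$ on $H_\psi = H$, which satisfies $\psi(1) = I$. Then statement (3) of Theorem~\ref{th1} reads precisely $H = \Ran(P_1) \dotplus \cdots \dotplus \Ran(P_n)$, and since statement (1) holds by the previous paragraph, statement (3) holds as well. This gives the desired conclusion.

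There is essentially no obstacle here: the corollary is a routine specialization obtained by combining a known decomposition result over a general $C^*$--algebra with the already--established fact (Theorem~\ref{th1}, (1)$\Leftrightarrow$(3)) that, for $\aa = B(H)$, the algebraic direct--sum decomposition $\bigoplus_i P_i\aa = \aa$ is the same as the geometric decomposition $\bigoplus_i \Ran(P_i) = H$. The only point worth a word of care is that one must check $(P_1,\cdots,P_n) \in \P_n(B(H))$, i.e.\ that all the $P_i$ are non--trivial projections, which is part of the hypothesis. So the proof is short: cite Corollary~\ref{coro4a} for completeness in $B(H)$, then cite Theorem~\ref{th1}~(3) with the identity representation to translate completeness into the range decomposition.

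\begin{proof}
Regard $B(H)$ as a unital $C^*$--algebra and take $\aa = B(H)$ in Corollary~\ref{coro4a}. Since $(P_1,\cdots,P_n)$ is an $n$--tuple of non--trivial projections in $B(H)$ with $\big\|\sum_{i=1}^n P_i - I\big\| < (n-1)^{-2}$, Corollary~\ref{coro4a} shows that $(P_1,\cdots,P_n)$ is complete in $B(H)$. By the equivalence (1)$\Leftrightarrow$(3) in Theorem~\ref{th1}, applied with the identity representation of $B(H)$ on $H$, this is equivalent to $H = \Ran(P_1) \dotplus \cdots \dotplus \Ran(P_n)$.
\end{proof}
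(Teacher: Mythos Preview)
Your proof is correct and follows exactly the paper's approach: the paper simply states that the corollary follows by combining Corollary~\ref{coro4a} with Theorem~\ref{th1}~(3), and you have spelled this out in detail using the identity representation of $B(H)$ on $H$.
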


Let $(P_1,\cdots,P_n)\in\P_n(\aa)$. A well--known statement says: ``for any $\ep>0$, there is $\de>0$ such that
if $\|P_iP_j\|<\de$, $i\not=j$, $i,j=1,\cdots,n$, then there are mutually orthogonal projections $P'_1,\cdots,P'_n
\in\aa$ with $\|P_i-P_i'\|<\ep$, $i=1,\cdots,n$". It may be the first time appeared in Glimm's paper \cite{Gl}.
By using the induction on $n$, he gave its proof. But how $\de$ depends on $\ep$ is not given. Lemma 2.5.6 of
\cite{HL} states this statement and the author gives a slightly different proof. We can find from the proof of
\cite[Lemma 2.5.6]{HL} that the relation between $\de$ and $\ep$ is $\de\le\dfrac{\ep}{(12)^{(n-1)}n!}$.

The next corollary will give a new proof of this statement with the relation $\delta=\dfrac{\epsilon}{2(n-1)}$
for $\ep\in (0,1)$.

\begin{corollary}\label{coro4c}
Let $(P_1,\cdots,P_n)\in\P_n(\aa)$. Given $\ep\in (0,1)$. If $P_1,\cdots,P_n$ satisfy condition: $\|P_iP_j\|<\de
=\dfrac{\epsilon}{2(n-1)}$, $1\le i<j\le n$, then there are mutually
orthogonal projections $P'_1,\cdots,P'_n\in\aa$ such that $\|P_i-P_i'\|<\ep$, $i=1,\cdots,n$.
\end{corollary}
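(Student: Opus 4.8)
The plan is to reduce Corollary \ref{coro4c} to Proposition \ref{prop4a} by a direct computation. Suppose $\|P_iP_j\|<\delta=\frac{\epsilon}{2(n-1)}$ for all $i<j$. Since $\delta\le\frac12$ (as $\epsilon<1$ and $n\ge 2$), we have in particular $\eta=\max_{i\ne j}\|P_iP_j\|<\delta<(n-1)^{-1}$, so with $\rho=\min\{\beta(P_1),\dots,\beta(P_n)\}=1$ the hypotheses of Proposition \ref{prop4a} are met with $\delta$ in the role of the ``$\delta$'' there lying in $[\eta,(n-1)^{-1})$. Hence $A=\sum_{i=1}^nP_i$ has $0$ as an isolated point of its spectrum (if $0\in\sigma(A)$), and $A\aa=P_1\aa\dotplus\cdots\dotplus P_n\aa$. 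Moreover part (1) of Proposition \ref{prop4a} gives $\sigma(A^2)\setminus\{0\}\subset[1-(n-1)\delta,\,1+(n-1)\delta]$, i.e.\ $\sigma(A)\setminus\{0\}$ is bounded away from $0$ with an explicit gap.

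First I would produce the orthogonal projections. Since $0$ is isolated in $\sigma(A)$, Lemma \ref{lem3a} gives the Moore--Penrose inverse $A^\dag\in\aa_+$; set $Q=AA^\dag$, the support projection of $A$. Then $A$ is invertible and positive in the corner $Q\aa Q$, and on that corner $A^{-1}$ (meaning $A^\dag$) satisfies $\sigma(A)\subset[1-(n-1)\delta,1+(n-1)\delta]$ after rescaling the spectral bound from $A^2$. Now restrict attention to the corner: inside $Q\aa Q$ the element $A$ is close to $Q$, specifically $\|A-Q\|\le(n-1)\delta=\tfrac\epsilon2<1$ from the spectral bound, and $Q_i:=A^{-1/2}P_iA^{-1/2}$ (computed in $Q\aa Q$) are mutually orthogonal projections summing to $Q$ by Theorem \ref{th1}. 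One also needs $1-Q$: since $\Ker A=\bigcap_i\Ker P_i$, the projection $1-Q$ is orthogonal to every $P_i$, so it can be absorbed harmlessly (e.g.\ add it to $P_1'$). Thus I would set $P_i':=Q_i$ for $i\ge 2$ and $P_1':=Q_1+(1-Q)$; these are mutually orthogonal projections in $\aa$ with $\sum P_i'=1$.

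Next I would estimate $\|P_i-P_i'\|$. Writing $P_i=A^{1/2}Q_iA^{1/2}$ (equation \eqref{3eqa}, valid in the corner $Q\aa Q$, with the $1-Q$ part handled separately), we get $P_i-P_i'=A^{1/2}Q_iA^{1/2}-Q_i=(A^{1/2}-Q)Q_iA^{1/2}+Q_i(A^{1/2}-Q)$, whence $\|P_i-P_i'\|\le\|A^{1/2}-Q\|\,(\|A^{1/2}\|+1)$. From $\sigma(A)\subset[1-\tfrac\epsilon2,1+\tfrac\epsilon2]$ on the corner, $\|A^{1/2}-Q\|\le 1-\sqrt{1-\epsilon/2}\le\epsilon/2$ (since $1-\sqrt{1-t}\le t$ for $t\in[0,1]$, using $\sqrt{1-t}\ge 1-t$), and $\|A^{1/2}\|\le\sqrt{1+\epsilon/2}\le 2$; a slightly more careful bookkeeping — e.g.\ using $\|A^{1/2}-Q\|(\|A^{1/2}\|+1)\le(1-\sqrt{1-\epsilon/2})(1+\sqrt{1+\epsilon/2})$ and checking this is $<\epsilon$ on $(0,1)$ — yields $\|P_i-P_i'\|<\epsilon$. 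For $i=1$ one adds $\|(1-Q)-(1-Q)\|=0$, so the bound is unchanged.

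The main obstacle I expect is the bookkeeping around the kernel: the corner decomposition $P_i=A^{1/2}Q_iA^{1/2}$ from \eqref{3eqa} was derived under the assumption $(P_1,\dots,P_n)\in\PC_n(\aa)$, i.e.\ $A$ invertible, whereas here $A$ may be merely ``invertible on its support''. Carrying the argument through the corner $Q\aa Q$ and verifying that $1-Q$ commutes with and is orthogonal to each $P_i$ (so that $(QP_1Q,\dots,QP_nQ)$ is a genuine complete tuple in $Q\aa Q$ to which Theorem \ref{th1} applies) is the step requiring care; alternatively one can sidestep it by first noting $\|\sum P_i-1\|$ need not be small, but the \emph{restricted} spectral estimate from Proposition \ref{prop4a}(1) is exactly what survives and suffices. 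Once the corner formalism is set up, the final numerical inequality $(1-\sqrt{1-\epsilon/2})(1+\sqrt{1+\epsilon/2})<\epsilon$ for $\epsilon\in(0,1)$ is a one-line calculus check.
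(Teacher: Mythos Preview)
Your approach is essentially the paper's: work in the corner $Q\aa Q$ where $Q=AA^\dag$, observe that $(P_1,\dots,P_n)$ is complete there via Proposition~\ref{prop4a}(3) and Theorem~\ref{th1}, set $P_i'=(A^\dag)^{1/2}P_i(A^\dag)^{1/2}$ (your $Q_i$), and bound $\|P_i-P_i'\|$ via the spectral estimate from Proposition~\ref{prop4a}(1). The paper writes the telescoping as $P_i'-P_i=((A^\dag)^{1/2}-Q)P_i(A^\dag)^{1/2}+P_i((A^\dag)^{1/2}-Q)$, you write the dual $P_i-Q_i=(A^{1/2}-Q)Q_iA^{1/2}+Q_i(A^{1/2}-Q)$; either gives the bound. (Your aside ``$\sigma(A^2)\setminus\{0\}$'' should read $\sigma(A)\setminus\{0\}$, since $P_i^2=P_i$ makes $\sum T_i^2=A$; you correct this silently two lines later.)

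There is one genuine slip: absorbing $1-Q$ into $P_1'$ is \emph{not} harmless. Since each $P_i\le Q$, the element $P_1-P_1'$ with your choice $P_1'=Q_1+(1-Q)$ decomposes orthogonally as $(P_1-Q_1)\oplus(-(1-Q))$, so $\|P_1-P_1'\|=\max\{\|P_1-Q_1\|,\,\|1-Q\|\}=1$ whenever $Q\ne 1$ (e.g.\ take $P_1,P_2$ already orthogonal with $P_1+P_2\ne 1$). Your sentence ``one adds $\|(1-Q)-(1-Q)\|=0$'' is therefore wrong: the $(1-Q)$--component of $P_1$ is $0$, not $1-Q$. The fix is simply to drop this step --- the statement only asks for mutually orthogonal projections, not for $\sum P_i'=1$, and the paper's proof makes no attempt to arrange the latter.
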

\begin{proof} Set $A=\sum\limits^n_{i=1}P_i$. Noting that $\gamma=\min\{\beta(P_1),\cdots,\beta(P_n)\}=1$,
$\|P_iP_j\|<\dfrac{1}{n-1}$, $1\le i<j\le n$ and taking $\rho=1$, we have
$\sigma(A)\backslash\{0\}\subset [1-(n-1)\delta,1+(n-1)\delta]$ by Proposition \ref{prop4a} (1). So the positive element
$A^\dag$ exists by Lemma \ref{lem3a}. Set $P=A^\dag A=AA^\dag\in\aa$. From $AA^\dag A=A$ and $A^\dag AA^\dag=A^\dag$,
we get that $P_i\le P$, $i=1,\cdots,n$ and $AP=PA=A$, $A^\dag P=PA^\dag=A^\dag$. So $A\in GL(P\aa P)$ with the
inverse $A^\dag\in P\aa P$. Let $\sigma_{P\aa P}(A^\dag)$ stand for the spectrum of $A^\dag$ in $P\aa P$. Then
\begin{align}
\notag\sigma_{P\aa P}(A^\dag)&=\sigma(A^\dag)\backslash\{0\}=\{\lambda^{-1}\vert\,\lambda\in\sigma(A)\backslash\{0\}\}\\
\label{4eqb}&\subset [(1+(n-1)\delta)^{-1},(1-(n-1)\delta)^{-1}],
\end{align}

Now by Proposition \ref{prop4a}, $P\aa=A\aa=P_1\aa\dotplus\cdots\dotplus P_n\aa$. Thus, by using $P_i\le P$, $i=1,\cdots,n$,
we have $P\aa P=P_1(P\aa P)\dotplus\cdots\dotplus P_n(P\aa P)$ and then $P_iA^\dag P_j=\delta_{ij}P_i$, $i,j=1,\cdots,n$
by Theorem \ref{th1}. Put $P_i'=(A^\dag)^{1/2}P_i(A^\dag)^{1/2}\in\aa$, $i=1,\cdots,n$. Then $P_1',\cdots,P_n'$ are mutually
orthogonal projections and moreover, for $1\le i\le n$,
\begin{align}
\notag\|P'_i-P_i\|&\le\|(A^\dag)^{1/2}P_i(A^\dag)^{1/2}-P_i(A^\dag)^{1/2}\|+\|P_i(A^\dag)^{1/2}-P_i\|\\
\label{4eqc}&\le(\|(A^\dag)^{1/2}\|+1)\|(A^\dag)^{1/2}-P\|.
\end{align}

Note that $0<(n-1)\de< 1/2$. Applying Spectrum Mapping Theorem to (\ref{4eqb}), we get that
$\|(A^\dag)^{1/2}\|)\le(1-(n-1)\delta)^{-1/2}<\sqrt{2}$ and
$$
\|P-(A^\dag)^{1/2}\|\le (1-(n-1)\delta)^{-1/2}-1<\frac{2}{1+\sqrt{2}}\,(n-1)\de.
$$
Thus $\|P'_i-P_i\|<2(n-1)\de=\ep$ by (\ref{4eqc}).
\end{proof}

Applying Theorem \ref{th1} and Corollary \ref{coro4c} to an $n$--tuple of linear independent unit vectors,
we have:
\begin{corollary}\label{coro4d}
Let $(\alpha_1,\cdots,\alpha_n)$ be an $n$--tuple of linear independent unit vectors in Hilbert space $H$.
\begin{enumerate}
\item[$(1)$] There is an invertible, positive operator $K$ in $B(H)$ and an $n$--tuple of mutually orthogonal unit vectors
$(\gamma_1,\cdots,\gamma_n)$ in $H$ such that $\gamma_i=K\alpha_i$, $i=1,\cdots,n$.
\item[$(2)$] Given $\ep\in (0,1)$. If $|<\alpha_i,\alpha_j>|<\dfrac{\epsilon}{2(n-1)}$, $1\le i<j\le n$,
then there exists an $n$--tuple of mutually orthogonal unit vectors
$(\beta_1,\cdots,\beta_n)$ in $H$ such that $\|\alpha_i-\beta_j\|<2\ep$, $i=1,\cdots,n.$
\end{enumerate}
\end{corollary}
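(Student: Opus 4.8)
The plan is to pass from the vectors $\alpha_1,\dots,\alpha_n$ to their associated rank--one projections and then to read off (1) from Theorem \ref{th1} and (2) from Corollary \ref{coro4c}. For part (1), I would set $V=\mathrm{span}\{\alpha_1,\dots,\alpha_n\}$, an $n$--dimensional subspace of $H$, and for each $i$ let $\tilde P_i\in B(V)$ be the rank--one projection with range $\mathbb C\alpha_i$; since $n\ge 2$, these are non--trivial projections of $B(V)$. As the $\alpha_i$ are linearly independent and span $V$, we have $V=\Ran(\tilde P_1)\dotplus\cdots\dotplus\Ran(\tilde P_n)$, so by the implication (3)$\Rightarrow$(1) of Theorem \ref{th1}, applied with the identity representation of $B(V)$, the tuple $(\tilde P_1,\dots,\tilde P_n)$ is complete in $B(V)$. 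Hence, by Theorem \ref{th1}(6)--(7), $\tilde A=\sum_{i=1}^n\tilde P_i$ is positive and invertible in $B(V)$ with $\tilde P_i\tilde A^{-1}\tilde P_i=\tilde P_i$ and $\tilde P_i\tilde A^{-1}\tilde P_j=0$ for $i\ne j$. Evaluating these identities on the unit vectors $\alpha_i$ gives $\langle\tilde A^{-1}\alpha_i,\alpha_i\rangle=1$ and $\langle\tilde A^{-1}\alpha_i,\alpha_j\rangle=0$ $(i\ne j)$; equivalently, $\gamma_i:=\tilde A^{-1/2}\alpha_i$ $(\inn)$ form an orthonormal system in $V$. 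Taking $K\in B(H)$ to be $\tilde A^{-1/2}$ on $V$ and the identity on $V^\perp$ yields a positive invertible operator with $K\alpha_i=\gamma_i$, which proves (1).

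For part (2), I would work in $B(H)$ with the rank--one projections $P_i=|\alpha_i\rangle\langle\alpha_i|$, $\inn$. A one--line computation gives $\|P_iP_j\|=|\langle\alpha_i,\alpha_j\rangle|<\de$ for $i\ne j$, where $\de=\ep/(2(n-1))$, which is exactly the hypothesis of Corollary \ref{coro4c}. That corollary, and more precisely its proof, produces the mutually orthogonal projections $P_i'=(A^\dag)^{1/2}P_i(A^\dag)^{1/2}$, where $A=\sum_{i=1}^nP_i$ and $A^\dag$ is the Moore--Penrose inverse of $A$, satisfying $\|P_i-P_i'\|<\ep$. Since $P_i$ has rank one, $P_i'=|\beta_i\rangle\langle\beta_i|$ with $\beta_i:=(A^\dag)^{1/2}\alpha_i\ne 0$, and because $P_i'$ is a projection, $\beta_i$ is a unit vector; the relations $P_i'P_j'=0$ $(i\ne j)$ then force $\langle\beta_i,\beta_j\rangle=0$, so $(\beta_1,\dots,\beta_n)$ is an orthonormal system. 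Finally, writing $P=A^\dag A$ and using $\alpha_i=P\alpha_i$, one gets $\alpha_i-\beta_i=(P-(A^\dag)^{1/2})\alpha_i$, hence $\|\alpha_i-\beta_i\|\le\|P-(A^\dag)^{1/2}\|<(n-1)\de=\ep/2<2\ep$ by the spectral estimate already established in the proof of Corollary \ref{coro4c}. (Alternatively, one may bound $\|\alpha_i-\beta_i\|$ by $\sqrt{2}\,\|P_i-P_i'\|$ via the elementary relation between the distance of two unit vectors and the norm of the difference of the corresponding rank--one projections, noting that $\langle\alpha_i,\beta_i\rangle\ge 0$ because $(A^\dag)^{1/2}$ is positive.)

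The point that needs care, and what I expect to be the main obstacle, is that the $n$--tuple $(P_1,\dots,P_n)$ built from the $\alpha_i$ is in general \emph{not} complete in $B(H)$: its ranges span only the $n$--dimensional subspace $V$, which may be a proper subspace of $H$. Consequently, in (1) one cannot apply Theorem \ref{th1} to $B(H)$ directly and must first descend to $B(V)$, and in (2) one must check that the orthogonal projections supplied by Corollary \ref{coro4c} are again rank one --- so that they genuinely arise from unit vectors $\beta_i$ --- and keep track of the constant in order to reach the stated bound $2\ep$. Once these observations are in place, the remaining verifications are routine.
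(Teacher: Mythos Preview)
Your argument is correct and follows the paper's approach closely: both parts pass to the rank--one projections on the span of the $\alpha_i$ and invoke Theorem \ref{th1} for (1) and Corollary \ref{coro4c} for (2). The only noteworthy difference is in part (2), where the paper defines $\beta_i$ as the normalization of $P_i'\alpha_i$ and reaches the bound $2\ep$ via the triangle inequality, whereas you set $\beta_i=(A^\dag)^{1/2}\alpha_i$ directly---using the explicit form of $P_i'$ from the proof of Corollary \ref{coro4c}---and obtain the sharper estimate $\|\alpha_i-\beta_i\|<\ep/2$; the two constructions in fact produce the same vectors, so your route simply exposes a better constant.
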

\begin{proof} Set $H_1=\mathrm{span}\{\alpha_1,\cdots,\alpha_n\}$ and $P_i\xi=<\xi,\alpha_i>\alpha_i$,
$\forall\,\xi\in H_1$, $i=1,\cdots,n$. Then $(P_1,\cdots,P_n)\in\P_n(B(H_1))$ and
$\Ran(P_1)\dotplus\cdots\dotplus\Ran(P_n)=H_1$.

By Theorem \ref{th1}, $A_0=\sum\limits^n_{i=1}P_i$ is invertible in $B(H_1)$ and $P_iA_0^{-1}P_j=\delta_{ij}P_i$,
$i,j=1,\cdots,n$. Put $K=A_0^{-1/2}+P_0$ and $\gamma_i=A_0^{-1/2}\alpha_i$, $i=1,\cdots,n$, where $P_0$ is the
projection of $H$ onto $H_1^\perp$. It is easy to check that $K$ is invertible and positive in $B(H)$ with
$\gamma_i=K\alpha_i$, $i=1,\cdots,n$ and $(\gamma_1,\cdots,\gamma_n)$ is an $n$--tuple of mutually orthogonal unit
vectors. This proves (1).

(2) Note that $\|P_iP_j\|=|<\alpha_i,\alpha_j>|<\dfrac{\ep}{2(n-1)}$, $1\le i<j\le n$. Thus, by Corollary \ref{coro4c},
there are mutually orthogonal projections $P'_1,\cdots,P'_n\in\aa$ such that $\|P_i-P_i'\|<\ep$, $i=1,\cdots,n$.
Put $\beta_i'=P'_i\alpha_i$, $i=1,\cdots,n$. Then $\beta_1',\cdots,\beta_n'$ are mutually orthogonal and
$\|\alpha_i-\beta'_i\|<\epsilon$, $i=1,\cdots,n$. Set $\beta_i=\|\beta_i'\|^{-1}\beta'_i$, $i=1,\cdots,n$. Then
$<\beta_i,\beta_j>=\delta_{ij}\beta_i$, $i,j=1,\cdots,n$ and
$$
\|\alpha_i-\beta_i\|\le\|\alpha_i-\beta'_i\|+|1-\|\beta'_i\||<2\ep,
$$
for $i=1,\cdots,n$.
\end{proof}

Now we give a simple characterization of the completeness of a given $n$--tuple of projections in $C^*$--algebra $\aa$
as follows.
\begin{theorem}\label{th2}
Let $P_1,\cdots,P_n$ be projections in $\aa$. Then $(P_1,\cdots,P_n)$ is complete if and only if
$A=\sum\limits_{i=1}^nP_i$ is invertible in $\aa$ and $\|P_iA^{-1}P_j\|<\big[(n-1)\|A^{-1}\|\|A\|^2\big]^{-1}$,
$\forall\,i\not=j$, $i,j=1,\cdots,n$.
\end{theorem}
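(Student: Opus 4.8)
The ``only if'' direction is immediate: if $(P_1,\dots,P_n)$ is complete, then by the equivalence $(1)\Leftrightarrow(7)$ of Theorem \ref{th1}, $A$ is invertible and $P_iA^{-1}P_j=0$ for all $i\neq j$, so $\|P_iA^{-1}P_j\|=0$, which is of course strictly smaller than the positive real number $\big[(n-1)\|A^{-1}\|\|A\|^2\big]^{-1}$.

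For the ``if'' direction, assume $A=\sum_{i=1}^nP_i\in GL(\aa)$ and $\|P_iA^{-1}P_j\|<\big[(n-1)\|A^{-1}\|\|A\|^2\big]^{-1}$ for $i\neq j$. Put $Q_i=A^{-1/2}P_iA^{-1/2}=(A^{-1/2}P_i)(A^{-1/2}P_i)^*\in\aa_+$; then $\sum_{i=1}^nQ_i=1$ and $P_i=A^{1/2}Q_iA^{1/2}$, $i=1,\dots,n$. The plan is to apply Proposition \ref{prop4a} to the $n$--tuple $(Q_1,\dots,Q_n)$ and then transport the resulting decomposition back by the invertible left multiplication $x\mapsto A^{1/2}x$.

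First I would establish a spectral gap for each $Q_i$. Since $A\le\|A\|\cdot 1$ we have $A^{-1}\ge\|A\|^{-1}\cdot 1$, hence $P_iA^{-1}P_i\ge\|A\|^{-1}P_i$, and therefore
\[
Q_i^2=A^{-1/2}(P_iA^{-1}P_i)A^{-1/2}\ge\|A\|^{-1}A^{-1/2}P_iA^{-1/2}=\|A\|^{-1}Q_i .
\]
Reading this off in the commutative $C^*$--algebra generated by $Q_i$ forces $t\,(t-\|A\|^{-1})\ge 0$ for every $t\in\sigma(Q_i)$, so $\sigma(Q_i)\setminus\{0\}\subset[\|A\|^{-1},\infty)$ and $\beta(Q_i)\ge\|A\|^{-1}>0$; thus $\gamma=\min_i\beta(Q_i)\ge\|A\|^{-1}$. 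Next, for $i\neq j$ one has $Q_iQ_j=A^{-1/2}P_iA^{-1}P_jA^{-1/2}$, so
\[
\|Q_iQ_j\|\le\|A^{-1}\|\,\|P_iA^{-1}P_j\|<\|A^{-1}\|\cdot\big[(n-1)\|A^{-1}\|\|A\|^2\big]^{-1}=\big[(n-1)\|A\|^2\big]^{-1},
\]
and taking the maximum over the finitely many pairs keeps the inequality strict: $\eta=\max_{i\neq j}\|Q_iQ_j\|<(n-1)^{-1}\|A\|^{-2}$. Choosing $\rho=\|A\|^{-1}\in(0,\gamma]$, hypotheses $(1)$ and $(2)$ of Proposition \ref{prop4a} hold for $(Q_1,\dots,Q_n)$, and conclusion $(3)$ gives $\aa=\big(\sum_{i=1}^nQ_i\big)\aa=Q_1\aa\dotplus\cdots\dotplus Q_n\aa$.

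Finally I would transfer this to $(P_1,\dots,P_n)$: the map $L\colon x\mapsto A^{1/2}x$ is a Banach--space isomorphism of $\aa$, hence it carries the internal direct sum to $\aa=L\aa=A^{1/2}Q_1\aa\dotplus\cdots\dotplus A^{1/2}Q_n\aa$, with directness and closedness of the summands preserved by any linear homeomorphism. Since $A^{1/2}Q_i\aa=A^{1/2}Q_iA^{1/2}\aa=P_i\aa$ (using $A^{1/2}\aa=\aa$), we conclude $\aa=P_1\aa\dotplus\cdots\dotplus P_n\aa$, i.e.\ $(P_1,\dots,P_n)$ is complete. The only step demanding genuine care is verifying the hypotheses of Proposition \ref{prop4a} after the conjugation $P_i\mapsto Q_i$ --- in particular the lower bound $\beta(Q_i)\ge\|A\|^{-1}$ together with keeping the chain of norm estimates strict so that $\eta<(n-1)^{-1}\rho^2$; the transport back along $A^{1/2}$ is routine bookkeeping.
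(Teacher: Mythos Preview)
Your proof is correct and follows essentially the same route as the paper: conjugate $P_i$ to $Q_i=A^{-1/2}P_iA^{-1/2}$, verify the hypotheses of Proposition~\ref{prop4a} with $\rho=\|A\|^{-1}$, and transport the resulting direct sum back via left multiplication by $A^{1/2}$. The only difference is in how you establish $\beta(Q_i)\ge\|A\|^{-1}$: the paper observes $Q_i(A^{1/2}P_iA^{1/2})Q_i=Q_i$ and invokes Corollary~\ref{coro4a0}(2), whereas you derive $Q_i^2\ge\|A\|^{-1}Q_i$ from $A^{-1}\ge\|A\|^{-1}\cdot 1$ and read off the spectral gap directly --- both arguments are equally short and give the same bound.
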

\begin{proof}
If $\pn$ is complete, then by Theorem \ref{th1}, $A$ is invertible in $\aa$ and
$P_iA^{-1}P_j=0$, $\forall\,i\not=j$, $i,j=1,\cdots,n$.

Now we prove the converse.

Put $T_i=A^{-1/2}P_iA^{-1/2}$, $1\le i\le n$. Then $\sum\limits^n_{i=1}T_i=1$. Since $T_i(A^{1/2}P_iA^{1/2})T_i=T_i$,
we have $\beta(T_i)\ge\|A^{1/2}P_iA^{1/2}\|^{-1}\ge\|A\|^{-1}$, $i=1,\cdots,n$ by Corollary \ref{coro4a0}. Put
$\rho=\|A\|^{-1}$. Then
$$
\|T_iT_j\|\le\|A^{-1}\|\|P_iA^{-1}P_j\|<\big[(n-1)\|A\|^2\big]^{-1}=\frac{\rho^2}{n-1},\ i\not=j,\ i,j=1,\cdots,n.
$$
Thus by Proposition \ref{prop4a} (3), $\aa=T_1\aa\dotplus\cdots\dotplus T_n\aa$. Note that $T_i\aa=A^{-1/2}(P_i\aa)$,
$i=1,\cdots,n$. So $P_1\aa\dotplus\cdots\dotplus P_n\aa=A^{1/2}\aa=\aa$,
i.e., $\pn\in\PC_n(\aa)$.
\end{proof}

\begin{corollary}\label{coro4b}
Let $(P_1,\cdots,P_n)\in\PC_n(\aa)$ and let  $(P'_1,\cdots,P'_n)\in\P_n(\aa)$. Assume that
$\|P_i-P'_i\|<\big[4n^2(n-1)\|A^{-1}\|^2(n\|A^{-1}\|+1)\big]^{-1}$, $i=1,\cdots,n$, where $A=\sum\limits^n_{i=1}P_i$, then
$(P'_1,\cdots,P'_n)\in\PC_n(\aa)$.
\end{corollary}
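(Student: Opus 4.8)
The plan is to verify, for the perturbed tuple $(P'_1,\cdots,P'_n)$, the two conditions of Theorem~\ref{th2}: that $A'=\sum_{i=1}^nP'_i$ is invertible in $\aa$ and that $\|P'_i(A')^{-1}P'_j\|<\big[(n-1)\|(A')^{-1}\|\,\|A'\|^2\big]^{-1}$ for all $i\ne j$. Write $\de=\big[4n^2(n-1)\|A^{-1}\|^2(n\|A^{-1}\|+1)\big]^{-1}$ for the bound in the hypothesis, so that $\|P_i-P'_i\|<\de$ and hence $\|A-A'\|\le\sum_{i=1}^n\|P_i-P'_i\|<n\de$. Two free a priori facts will be used repeatedly: $A'$ is a sum of $n$ projections, so $0\le A'\le n\cdot 1$ and $\|A'\|\le n$; and $A\le n\cdot 1$ is invertible, so $\|A^{-1}\|\ge\|A\|^{-1}\ge n^{-1}$, i.e.\ $n\|A^{-1}\|\ge1$. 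In particular $\|A^{-1}\|\,\|A-A'\|<n\de\,\|A^{-1}\|=\big[4n(n-1)\|A^{-1}\|(n\|A^{-1}\|+1)\big]^{-1}\le\tfrac18$.

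First I would settle the invertibility of $A'$. Since $\|A^{-1}(A-A')\|\le\|A^{-1}\|\,\|A-A'\|<\tfrac18<1$, the Neumann series gives $A'\in GL(\aa)$ with $\|(A')^{-1}\|\le\|A^{-1}\|\big(1-\|A^{-1}\|\,\|A-A'\|\big)^{-1}\le\tfrac87\|A^{-1}\|$, and the resolvent identity $(A')^{-1}-A^{-1}=(A')^{-1}(A-A')A^{-1}$ gives $\|(A')^{-1}-A^{-1}\|\le\tfrac87 n\|A^{-1}\|^2\de$.

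Then comes the main estimate. For $i\ne j$ I would split
$$
P'_i(A')^{-1}P'_j=(P'_i-P_i)(A')^{-1}P'_j+P_i(A')^{-1}(P'_j-P_j)+P_i(A')^{-1}P_j .
$$
Since $(P_1,\cdots,P_n)$ is complete, Theorem~\ref{th1} gives $P_iA^{-1}P_j=0$, so the last summand equals $P_i\big((A')^{-1}-A^{-1}\big)P_j$, of norm at most $\tfrac87 n\|A^{-1}\|^2\de$, while each of the first two summands has norm at most $\tfrac87\|A^{-1}\|\de$. Adding these up, using $2+n\|A^{-1}\|\le 2(n\|A^{-1}\|+1)$ and inserting the value of $\de$, one gets $\|P'_i(A')^{-1}P'_j\|<\dfrac{4}{7n^2(n-1)\|A^{-1}\|}$. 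On the other hand, from $\|(A')^{-1}\|\le\tfrac87\|A^{-1}\|$ and $\|A'\|\le n$ one has $\big[(n-1)\|(A')^{-1}\|\,\|A'\|^2\big]^{-1}\ge\dfrac{7}{8n^2(n-1)\|A^{-1}\|}$, and since $\tfrac47<\tfrac78$ the inequality demanded by Theorem~\ref{th2} holds. Therefore $(P'_1,\cdots,P'_n)\in\PC_n(\aa)$.

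The routine part is the constant-chasing; the one step that genuinely needs attention is the control of $(A')^{-1}$, since the hypothesis bounds $A-A'$ but not $A'$ itself. Once $\|A^{-1}\|\,\|A-A'\|$ is pinned below a fixed fraction — here $\tfrac18$, which is precisely what the shape of $\de$ together with $n\|A^{-1}\|\ge1$ guarantees — everything else is a chain of triangle inequalities, and the gap between $\tfrac47$ and $\tfrac78$ shows that the stated perturbation radius is comfortably sufficient (one could even afford cruder intermediate bounds, e.g.\ $\|(A')^{-1}\|\le 2\|A^{-1}\|$).
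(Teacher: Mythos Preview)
Your proof is correct and follows essentially the same approach as the paper: both verify the hypotheses of Theorem~\ref{th2} by bounding $\|A-A'\|$ via $n\|A^{-1}\|\ge 1$, then controlling $(A')^{-1}$ by Neumann series and estimating $P'_i(A')^{-1}P'_j$ through a three-term splitting that exploits $P_iA^{-1}P_j=0$. The only differences are cosmetic---the paper uses the coarser bound $\|A^{-1}\|\,\|A-A'\|<\tfrac12$ (yielding $\|(A')^{-1}\|<2\|A^{-1}\|$, which you yourself note would suffice) and a slightly different but equivalent telescoping of $P'_i(A')^{-1}P'_j$.
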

\begin{proof} Set $B=\sum\limits^n_{i=1}P'_i$. Since $n\|A^{-1}\|\ge\|A\|\|A^{-1}\|\ge 1$, it follows that
$\|A-B\|<\dfrac{1}{2\|A^{-1}\|}$. Thus $B$ is invertible in $\aa$ with
$$
\|B^{-1}\|\le\dfrac{\|A^{-1}\|}{1-\|A^{-1}\|\|A-B\|}<2\|A^{-1}\|,\
\|B^{-1}-A^{-1}\|<2\|A^{-1}\|^2\|A-B\|.
$$
Note that $P_iA^{-1}P_j=0$, $i\not=j$, $i,j=1,\cdots,n$, we have
\begin{align*}
\|P'_iB^{-1}P'_j\|&\le\|P'_i(B^{-1}-A^{-1})P'_j\|+\|(P'_i-P_i)A^{-1}P'_j\|+\|P_iA^{-1}(P_j-P'_j)\|\\
&\le 2\|A^{-1}\|^2\|A-B\|+\|A^{-1}\|\|P_i-P'_i\|+\|A^{-1}\|\|P_j-P'_j\|\\
&<\frac{1}{2n^2(n-1)\|A^{-1}\|}<\frac{1}{(n-1)\|B^{-1}\|\|B\|^2}.
\end{align*}
So $(P_1',\cdots,P_n')$ is complete in $\aa$ by Theorem \ref{th2}.
\end{proof}

\section{Some equivalent relations and topological properties on $\PC_n(\aa)$}
\setcounter{equation}{0}

Let $\aa$ be a $C^*$--algebra with the unit $1$ and let $GL_0(\aa)$ (resp. $U_0(\aa)$) be the connected component of $1$
in $GL(\aa)$ (resp. in $U(\aa)$). Set
\begin{align*}
\PI_n(\aa)&=\big\{(P_1,\cdots,P_n)\in\P_n(\aa)\,\vert\,\sum\limits^n_{i=1}P_i\in GL(\aa)\big\}\\
\PO_n(\aa)&=\big\{(P_1,\cdots,P_n)\in\P_n(\aa)\,\vert\,\sum\limits^n_{i=1}P_i=1,\ P_iP_j=0,\ i\not=j,\ i,j=1,\cdots,n\big\}.
\end{align*}

\begin{definition}\label{def5a}
Let $(P_1,\cdots,P_n)$ and $(P_1',\cdots,P_n')$ be in $\PC_n(\aa)$.
\begin{enumerate}
\item[$(1)$] We say $(P_1,\cdots,P_n)$ is equivalent to $(P_1',\cdots,P_n')$, denoted by
$(P_1,\cdots,P_n)\sim(P_1',\cdots,P_n')$, if there are $U_1,\cdots,U_n\in\aa$ such that
$P_i=U_i^*U_i$, $P_i'=U_iU_i^*$.
\item[$(2)$] $(P_1,\cdots,P_n)$ and $(P_1',\cdots,P_n')$ are called to be unitarily equivalent, denoted by
$(P_1,\cdots,P_n)\sim_u(P_1',\cdots,P_n')$, if there is $U\in U(\aa)$ such that $UP_iU^*=P_i'$, $i=1,\cdots,n$.
\item[$(3)$] $(P_1,\cdots,P_n)$ and $(P_1',\cdots,P_n')$ are called homotopically equivalent, denoted by
$(P_1,\cdots,P_n)\sim_h(P_1',\cdots,P_n')$, if there exists a continuous mapping
$F\colon [0,1]\rightarrow\PC_n(\aa)$ such that $F(0)=\pn$ and $F(1)=(P_1',\cdots,P_n')$.
\end{enumerate}
\end{definition}

It is well--know that
$$
\pn\sim_h(P_1',\cdots,P_n')\Rightarrow \pn\sim(P_1',\cdots,P_n')
$$
and if $U(\aa)$ is path--connected,
$$
\pn\sim_u(P_1',\cdots,P_n')\Rightarrow \pn\sim_h(P_1',\cdots,P_n').
$$

\begin{lemma}\label{lem5b}
Let $(P_1,\cdots,P_n)$ be in $\PC_n(\aa)$ and $C$ be a positive and invertible element in $\aa$ with
$P_iC^2P_i=P_i$, $\inn$. Then $(CP_1C,\cdots,CP_nC)\in\PC_n(\aa)$ and $\pn\sim_h(CP_1C,\cdots,CP_nC)$ in $\PC_n(\aa)$.
\end{lemma}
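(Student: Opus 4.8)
The plan is to verify the two assertions separately: first that $(CP_1C,\cdots,CP_nC)$ lies in $\PC_n(\aa)$, and then that it is connected to $\pn$ by a path inside $\PC_n(\aa)$. For the first part I would show that each $CP_iC$ is actually a projection. Since $C$ is positive and invertible with $P_iC^2P_i=P_i$, put $R_i=CP_iC$. Then $R_i^*=R_i$ and $R_i^2=CP_i(C^2)P_iC=CP_iC=R_i$, so $R_i\in\P_n(\aa)$ (nontriviality is inherited because $C$ is invertible). To place the $n$-tuple in $\PC_n(\aa)$ I would use the characterization of completeness from Theorem \ref{th1}: since $\pn$ is complete, $A=\sum_iP_i\in GL(\aa)$ and $P_iA^{-1}P_j=\delta_{ij}P_i$. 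Conjugating by $C$, one checks that the invertible element $B=\sum_i CP_iC=CAC$ has inverse $C^{-1}A^{-1}C^{-1}$, and then
\[
R_iB^{-1}R_j=CP_iC\,C^{-1}A^{-1}C^{-1}\,CP_jC=CP_iA^{-1}P_jC=\delta_{ij}CP_iC=\delta_{ij}R_i,
\]
so $(R_1,\cdots,R_n)\in\PC_n(\aa)$ by condition (7) of Theorem \ref{th1}.

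For the homotopy, the natural candidate is the path obtained by interpolating between $1$ and $C$ along the line (or along $C^t$). Because $C$ is positive and invertible, write $C=\exp(h)$ with $h=h^*\in\aa$, and for $t\in[0,1]$ set $C_t=\exp(th)$, a norm-continuous path of positive invertible elements with $C_0=1$, $C_1=C$. I would then define $F(t)=\big(C_t P_1 C_t,\cdots,C_t P_n C_t\big)$. The first issue is that $C_tP_iC_t$ need not be a projection for intermediate $t$, since $P_iC_t^2P_i=P_i$ is only known at $t=0,1$. To fix this I would instead pass through the idempotent picture: let $E_i=P_iA^{-1}$ be the idempotents from Theorem \ref{th1}(8), and observe that $C_tE_iC_t^{-1}$ is an idempotent for every $t$, with $\sum_i C_tE_iC_t^{-1}=1$ and $(C_tE_iC_t^{-1})(C_tE_jC_t^{-1})=0$ for $i\ne j$. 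Applying Lemma \ref{lem3b}, each such idempotent has an associated projection $\hat P_i(t)=(C_tE_iC_t^{-1})\big((C_tE_iC_t^{-1})^*+(C_tE_iC_t^{-1})-1\big)^{-1}$, and the tuple $(\hat P_1(t),\cdots,\hat P_n(t))$ satisfies condition (8) of Theorem \ref{th1}, hence lies in $\PC_n(\aa)$; the map $t\mapsto(\hat P_i(t))_i$ is norm-continuous because inversion and the algebraic operations are continuous on $GL(\aa)$. At $t=0$ this gives $\hat P_i(0)=P_i$, and at $t=1$ one must check $\hat P_i(1)=CP_iC$.

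The key computation is therefore the endpoint identification at $t=1$: I must verify that the projection associated (via Lemma \ref{lem3b}) to the idempotent $CE_iC^{-1}$ is exactly $CP_iC=R_i$. By the uniqueness clause in Lemma \ref{lem3b}, it suffices to check that $R_i$ is a projection with $(CE_iC^{-1})R_i=R_i$ and $R_i(CE_iC^{-1})=CE_iC^{-1}$. Using $E_iP_i=P_i$, $P_iE_i=E_i$, and $P_iC^2P_i=P_i$, one computes $(CE_iC^{-1})(CP_iC)=CE_iP_iC=CP_iC=R_i$ and $(CP_iC)(CE_iC^{-1})=CP_iC^2E_iC^{-1}=CP_iC^2P_iE_iC^{-1}=CP_iE_iC^{-1}=CE_iC^{-1}$, which is what is needed. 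This endpoint check, together with confirming that $\hat P_i(0)=P_i$ (immediate, since $C_0=1$ makes the idempotent $E_i$ itself, whose associated projection is $P_i$ by the definition of $E_i$ and Lemma \ref{lem3b}), completes the homotopy.

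The step I expect to be the main obstacle is ensuring that the path actually stays inside $\PC_n(\aa)$ at every intermediate time while still having the correct projection endpoints — the naive path $C_tP_iC_t$ fails to be a tuple of projections, so the detour through the idempotent picture and Lemma \ref{lem3b} is essential, and the bookkeeping of the Lemma \ref{lem3b} correspondence at the two endpoints is the delicate part. Everything else (continuity, the inverse formula for $B$, the orthogonality relations for the conjugated idempotents) is routine manipulation in $GL(\aa)$.
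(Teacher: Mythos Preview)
Your proposal is correct and follows essentially the same strategy as the paper: interpolate using the positive path $C_t=C^t$, pass to conjugated idempotents, apply Lemma~\ref{lem3b} to produce a continuous path of projections, and identify the endpoints via the uniqueness clause of that lemma.

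The one noteworthy difference is \emph{which} idempotents get conjugated. The paper conjugates the projections themselves, setting $Q_i(t)=C^tP_iC^{-t}$, and then argues completeness at each $t$ from the definition, using that $C^t$ is invertible to get $C^tP_1\aa\dotplus\cdots\dotplus C^tP_n\aa=\aa$. You instead conjugate the idempotents $E_i=P_iA^{-1}$ from Theorem~\ref{th1}(8); since these already satisfy $\sum_iE_i=1$ and $E_iE_j=0$, the conjugates $C^tE_iC^{-t}$ inherit those relations automatically, and completeness of $(\hat P_1(t),\dots,\hat P_n(t))$ is then a direct appeal to condition~(8). This is a slight streamlining. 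The two constructions in fact yield the \emph{same} path of projections, because $C^tE_iC^{-t}$ and $C^tP_iC^{-t}$ have the same range (as $E_i\aa=P_i\aa$), and the projection produced by Lemma~\ref{lem3b} depends only on that range. Your endpoint checks at $t=0$ and $t=1$ are correct as written.
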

\begin{proof} From $(CP_iC)^2=CP_iC^2P_iC=CP_iC$, $1\le i\le n$, we have $(CP_1C,\cdots,CP_nC)$ $\in\P_n(\aa)$.
$(P_1,\cdots,P_n)\in\PC_n(\aa)$ implies that $A=\sum\limits^n_{i=1}P_i\in GL(\aa)$ and $P_iA^{-1}P_i=P_i$, $1\le i\le n$
by Theorem \ref{th1}. So $(CP_iC)\Big(\sum\limits_{i=1}^n(CP_iC)\Big)^{-1}(CP_iC)=CP_iA^{-1}P_iC$ and hence
$(CP_1C,\cdots,CP_nC)\in\PC_n(\aa)$ by Theorem \ref{th1}.

Put $A_i(t)=C^tP_iC^t$ and $B_i(t)=C^{-t}P_iC^{-t}$, $\forall\,t\in [0,1]$, $i=1,\cdots,n$. Then $Q_i(t)\triangleq
A_i(t)B_i(t)=C^tP_iC^{-t}$ is idempotent and $A_i(t)B_i(t)A_i(t)=A_i(t)$, $\forall\,t\in [0,1]$, $i=1,\cdots,n$. Thus
$A_i(t)\aa=Q_i(t)\aa$, $\forall\,t\in [0,1]$, $i=1,\cdots,n$.

By Lemma \ref{lem3b}, $P_i(t)\triangleq Q_i(t)(Q_i(t)+(Q_i(t))^*-1)^{-1}$ is a projection in $\aa$ satisfying $Q_i(t)P_i(t)=P_i(t)$
and $P_i(t)Q_i(t)=Q_i(t)$, $\forall\,t\in [0,1]$, $i=1,\cdots,n$. Clearly, $A_i(t)\aa=Q_i(t)\aa=P_i(t)\aa$, $\forall\,
t\in [0,1]$ and $t\mapsto P_i(t)$ is a continuous mapping from $[0,1]$ into $\aa$, $i=1,\cdots,n$. Thus, from
$$
(C^tP_1C^t)\aa\dotplus\cdots\dotplus (C^tP_nC^t)\aa=\aa,\quad \forall\,t\in [0,1],
$$
we get that $F(t)=(P_1(t),\cdots,P_n(t))\in\PC_n(\aa)$, $\forall\,t\in [0,1]$. Note that
$F\colon [0,1]\rightarrow\PC_n(\aa)$ is continuous with $F(0)=\pn$. Note that $A_i(1)=CP_iC$ is a projection with
$A_i(1)Q_i(1)=CP_iCCP_iC^{-1}=Q_i(1)$ and $Q_i(1)A_i(1)=A_i(1)$, $i=1,\cdots,n$. So $P_i(1)=A_i(1)$, $i=1,\cdots,n$
and $F(1)=(CP_1C,\cdots,CP_nC)$. The assertion follows.
\end{proof}

For $(P_1,\cdots,P_n)\in\PC_n(\aa)$, $A=\sum\limits^n_{i=1}P_i\in GL(\aa)$ and $Q_i=A^{-1/2}P_iA^{-1/2}$ is a projection
with $Q_iQ_j=0$, $i\not=j$, $i,j=1,\cdots,n$ (see Theorem \ref{th1}), that is, $(Q_1,\cdots,Q_n)\in\PO_n(\aa)$. Since
$C=A^{-1/2}$ satisfies the condition given in Lemma \ref{lem5b}, we have the following:
\begin{corollary}\label{coro5a}
Let $(P_1,\cdots,P_n)\in\PC_n(\aa)$ and let $(Q_1,\cdots,Q_n)$ be as above. Then
$(P_1,\cdots,P_n)\sim_h(Q_1,\cdots,Q_n)$ in $\PC_n(\aa)$.
\end{corollary}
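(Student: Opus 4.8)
The plan is to deduce Corollary \ref{coro5a} as a direct special case of Lemma \ref{lem5b}, so the work is essentially bookkeeping: I must verify that the element $C = A^{-1/2}$ satisfies all the hypotheses of Lemma \ref{lem5b} and that the conclusion of the lemma, when specialized to this $C$, produces exactly the pair $(Q_1,\cdots,Q_n)$. First I would recall, via Theorem \ref{th1}, that $(P_1,\cdots,P_n)\in\PC_n(\aa)$ forces $A=\sum_{i=1}^n P_i\in GL(\aa)$, hence $A$ is positive and invertible and $A^{-1/2}$ exists as a positive invertible element of $\aa$; thus $C=A^{-1/2}$ is a legitimate candidate. Then I would check the remaining hypothesis $P_i C^2 P_i = P_i$: since $C^2 = A^{-1}$, this is precisely the identity $P_i A^{-1} P_i = P_i$, which holds by Theorem \ref{th1}(6).

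With the hypotheses verified, Lemma \ref{lem5b} immediately gives $(CP_1C,\cdots,CP_nC)\in\PC_n(\aa)$ together with $(P_1,\cdots,P_n)\sim_h(CP_1C,\cdots,CP_nC)$ in $\PC_n(\aa)$. The only remaining point is the identification $C P_i C = A^{-1/2} P_i A^{-1/2} = Q_i$, which is just the definition of $Q_i$ stated in the paragraph preceding the corollary. Putting these together yields $(P_1,\cdots,P_n)\sim_h(Q_1,\cdots,Q_n)$, which is the claim. I would also note in passing (as the surrounding text already does) that $(Q_1,\cdots,Q_n)\in\PO_n(\aa)$, i.e.\ the $Q_i$ are genuine mutually orthogonal projections summing to $1$, so the homotopy indeed lands inside $\PC_n(\aa)$; but strictly this membership is already subsumed in the conclusion of Lemma \ref{lem5b}.

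There is essentially no obstacle here — the corollary is by design an immediate application of the lemma, and the proof is a two-line verification. If I wanted to be fully self-contained rather than cite Lemma \ref{lem5b}, the one point deserving a word of care would be the continuity of the path $t\mapsto P_i(t)$ built from $C^t P_i C^{-t}$ via the similarity-to-projection map of Lemma \ref{lem3b}; but since we are permitted to invoke Lemma \ref{lem5b} wholesale, even that is handled. Hence the entire proof is: observe $C=A^{-1/2}$ is positive invertible with $P_i C^2 P_i = P_i A^{-1} P_i = P_i$ by Theorem \ref{th1}, apply Lemma \ref{lem5b}, and use $C P_i C = Q_i$.
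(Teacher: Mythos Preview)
Your proposal is correct and matches the paper's own argument essentially line for line: the paper also takes $C=A^{-1/2}$, notes via Theorem~\ref{th1} that $P_iC^2P_i=P_iA^{-1}P_i=P_i$, and invokes Lemma~\ref{lem5b} to conclude. There is nothing to add or correct.
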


\begin{theorem}\label{th3}
Let $\pn$ and $(P_1',\cdots,P_n')\in\PC_n(\aa)$. Then the following statements are equivalent:
\begin{enumerate}
\item[$(1)$] $\pn\sim(P_1',\cdots,P_n')$.
\item[$(2)$] there is $D\in GL(\aa)$ such that for $1\le i\le n$, $P_iDD^*P_i=P_i$ and $P'_i=D^*P_iD$.
\item[$(3)$] there is $(S_1,\cdots,S_n)\in\PC_n(\aa)$ such that
$$
\pn\sim_u(S_1,\cdots,S_n)\sim_h(P_1',\cdots,P_n').
$$
\end{enumerate}
\end{theorem}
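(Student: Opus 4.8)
The plan is to prove the cycle $(1)\Rightarrow(2)\Rightarrow(3)\Rightarrow(1)$, using the polar-decomposition idea together with Lemma \ref{lem5b} and Corollary \ref{coro5a}. For $(1)\Rightarrow(2)$: assume $\pn\sim(P_1',\cdots,P_n')$, so there are $U_1,\cdots,U_n\in\aa$ with $P_i=U_i^*U_i$ and $P_i'=U_iU_i^*$. First I would observe that each $U_i$ is a partial isometry with initial projection $P_i$ and final projection $P_i'$, hence $U_i=U_iP_i=P_i'U_i$. The candidate for $D$ is built from the idempotents $E_i=P_iA^{-1}$ (with $A=\sum_i P_i\in GL(\aa)$, available by Theorem \ref{th1}) on the source side and the analogous $E_i'=P_i'(A')^{-1}$ on the target side; concretely I would set $D=\sum_{i=1}^n U_i^* E_i'$ or a closely related expression, so that $D$ carries the ``internal'' direct-sum decomposition $\aa=\bigoplus_i P_i'\aa$ back to $\aa=\bigoplus_i P_i\aa$. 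I would check that $D$ is invertible (its inverse being $\sum_i E_i U_i$, using $E_iE_j=0$, $E_i'E_j'=0$, $U_iP_i=U_i$, $U_i^*P_i'=U_i^*$), that $P_iDD^*P_i=P_i$ (this is where $P_i=U_i^*U_i$ enters decisively, after absorbing the $E_i$'s using $P_iA^{-1}P_i=P_i$ from Theorem \ref{th1}(6)), and that $D^*P_iD=P_i'$ by the same bookkeeping.

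For $(2)\Rightarrow(3)$: given $D\in GL(\aa)$ with $P_iDD^*P_i=P_i$ and $P_i'=D^*P_iD$, let $D=V|D^*|$… more conveniently, write $D=U|D|$ with $U\in U(\aa)$ the unitary from the polar decomposition of the invertible element $D$ (so $|D|=(D^*D)^{1/2}$). Set $C=|D^*|=(DD^*)^{1/2}$, a positive invertible element; then $DD^*=C^2$, so the hypothesis reads $P_iC^2P_i=P_i$, which is exactly the hypothesis of Lemma \ref{lem5b}. Hence $(S_1,\cdots,S_n):=(CP_1C,\cdots,CP_nC)\in\PC_n(\aa)$ and $\pn\sim_h(S_1,\cdots,S_n)$, in particular $\pn\sim(S_1,\cdots,S_n)$ — but I actually need $\sim_u$ here, not just $\sim_h$. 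I would repair this by choosing the intermediate tuple differently: set $W=U$ the unitary part of $D$ and put $S_i=WP_iW^*$, so automatically $\pn\sim_u(S_1,\cdots,S_n)$ and $(S_1,\cdots,S_n)\in\PC_n(\aa)$ (conjugation by a unitary preserves completeness, via Theorem \ref{th1}(6) applied to $WAW^*$). It then remains to show $(S_1,\cdots,S_n)\sim_h(P_1',\cdots,P_n')$. Using $D=U|D|$ and $C=|D^*|=U|D|U^*$, one has $P_i'=D^*P_iD=|D|U^*P_iU|D|=|D|S_i'|D|$ where $S_i'=U^*P_iU$; alternatively, with $C'=|D|$ (positive invertible) the identity $P_i'=C'S_i'C'$ together with $S_i'C'^2S_i'=S_i'$ (a rearrangement of the hypothesis conjugated by $U^*$) puts us again in the situation of Lemma \ref{lem5b}, giving $(S_1',\cdots,S_n')\sim_h(P_1',\cdots,P_n')$. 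Composing with the unitary homotopy-or-equivalence statements and Corollary \ref{coro5a} (which connects any complete tuple to its orthogonalization $(Q_1,\cdots,Q_n)$ inside $\PC_n(\aa)$) I can chain everything to the required $\sim_u$ followed by $\sim_h$.

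For $(3)\Rightarrow(1)$: this is immediate from the two implications recalled just before the theorem, namely $\sim_h\ \Rightarrow\ \sim$ and $\sim_u\ \Rightarrow\ \sim_h\ (\Rightarrow\ \sim)$ when $U(\aa)$ is path-connected — but since we do not want to assume $U(\aa)$ is path-connected, I note instead that $\sim_u$ always implies $\sim$ directly (if $UP_iU^*=P_i'$ then $U_i:=UP_i$ satisfies $U_i^*U_i=P_i$ and $U_iU_i^*=P_i'$), and $\sim_h\Rightarrow\sim$ is the quoted fact; since $\sim$ is transitive, the chain in (3) yields $\pn\sim(P_1',\cdots,P_n')$.

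The main obstacle I anticipate is the bookkeeping in $(1)\Rightarrow(2)$: constructing $D$ from the partial isometries $U_i$ and the two systems of idempotents $E_i,E_i'$, and verifying simultaneously that it is invertible, that $P_iDD^*P_i=P_i$, and that $D^*P_iD=P_i'$. The delicate point is that the $U_i$ need not be compatible with each other (the ranges $P_i'\aa$ overlap, they are only in direct sum, not orthogonal), so one must carefully use the relations $U_iP_i=U_i=P_i'U_i$, $E_i'P_j'=\delta_{ij}$-type vanishing only after multiplying by the right factors, and the key identities $P_iA^{-1}P_i=P_i$, $P_i(A')^{-1}P_j'=0$ $(i\neq j)$ from Theorem \ref{th1}. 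A secondary subtlety is making sure the intermediate tuple in (3) is unitarily equivalent (not merely Hardy-equivalent) to $\pn$; resolving this forces the choice of the unitary part of the polar decomposition of $D$ as the conjugating unitary, after which Lemma \ref{lem5b} handles the homotopy half.
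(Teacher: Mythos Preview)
Your overall strategy coincides with the paper's. For $(2)\Rightarrow(3)$ the paper also takes the polar decomposition, sets $U=(DD^*)^{-1/2}D$ and $S_i=U^*P_iU$ (your $S_i'$), verifies $S_iC^2S_i=S_i$ with $C=U^*(DD^*)^{1/2}U=|D|$, and applies Lemma~\ref{lem5b}; and $(3)\Rightarrow(1)$ is declared obvious, exactly along the lines you give. Corollary~\ref{coro5a} plays no role.

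The one concrete slip is your candidate in $(1)\Rightarrow(2)$: the element $D=\sum_iU_i^*E_i'$ does \emph{not} satisfy $D^*P_jD=P_j'$, because in $D^*P_j=\sum_i(E_i')^*U_iP_j$ the factor $U_iP_j=U_iP_iP_j$ need not vanish for $i\neq j$ (the $P_i$ are not mutually orthogonal). The working ``closely related expression'' is
\[
D=\sum_iE_i^*U_i^*=A^{-1}\sum_iU_i^*:
\]
then $D^*=\sum_iU_iE_i$, the relation $E_iP_j=P_iA^{-1}P_j=\delta_{ij}P_j$ from Theorem~\ref{th1}(7) collapses $D^*P_j$ to $U_j$, and one more application gives $D^*P_jD=U_jU_j^*=P_j'$. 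This is precisely the paper's $D=A^{-1/2}WA'^{1/2}$, with the unitary $W=A^{-1/2}\big(\sum_iP_iU_i^*P_i'\big)(A')^{-1/2}$, once you simplify using $P_iU_i^*P_i'=U_i^*$. (Amusingly, your $\sum_iU_i^*E_i'$ turns out to be $(D^*)^{-1}$ for this correct $D$, so you had the right building blocks assembled in the wrong slot.) The paper also notes that once $D\in GL(\aa)$ and $D^*P_iD$ is a projection, the relation $P_iDD^*P_i=P_i$ follows for free from $(D^*P_iD)^2=D^*P_iD$, so the separate verification you worried about is unnecessary.
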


\begin{proof} The implication (3)$\Rightarrow$(1) is obvious. We now prove the implications (1)$\Rightarrow$(2)
and (2)$\Rightarrow$(3) as follows.

(1)$\Rightarrow$ (2) Let $U_i\in\aa$ be partial isometries such that
$U_i^*U_i=P_i$, $U_iU^*_i=P'_i$, $\inn$. Put $A=\sum\limits^n_{i=1}P_i$, $A'=\sum\limits^n_{i=1}P_i'$ and
$W=A^{-1/2}\big(\sum\limits_{i=1}^nP_iU^*_iP'_i\big)A'^{-1/2}$. Then
\begin{align*}
W^*W&=A'^{-1/2}\big(\sum_{i=1}^nP'_iU_iP_i\big)A^{-1}\big(\sum_{i=1}^nP_iU^*_iP'_i\big)A'^{-1/2}\\
&=A'^{-1/2}\big(\sum_{i=1}^nP'_iU_iP_iU^*_iP'_i\big)A'^{-1/2}
=A'^{-1/2}\big(\sum_{i=1}^nP'_i\big)A'^{-1/2}=1.
\end{align*}
Similarly, $WW^*=1$. Thus, $W\in U(\aa)$. Set $D=A^{-1/2}WA'^{1/2}\in GL(\aa)$. Then, for $1\le i\le n$,
$$
D^*P_iD=\big(\sum_{i=1}^nP'_iU_iP_i\big)A^{-1}P_iA^{-1}\big(\sum_{i=1}^nP_iU^*_iP'_i\big)
=P'_iU_iP_iU^*_iP'_i=P'_i
$$
and $P_iDD^*P_i=P_i$ follows from $(D^*P_iD)^2=D^*P_iD$.

(2)$\Rightarrow$(3) Put $U=(DD^*)^{-1/2}D$. Then $U\in U(\aa)$. Set $C=U^*(DD^*)^{1/2}U$ and $S_i=U^*P_iU$,
$1\le i\le n$. Then $(S_1,\cdots,S_n)\in\PC_n(\aa)$ with $(S_1,\cdots,S_n)$$\sim_u(P_1,\cdots,P_n)$
and $(P'_1,\cdots,P'_n)=(CS_1C,\cdots,CS_nC)$.

Since $S_iC^2S_i=U^*P_iDD^*P_iU=S_i$, $i=1,\cdots,n$, it follows from Lemma \ref{lem5b} that $(P'_1,\cdots,P'_n)
\sim_h(S_1,\cdots,S_n)$ in $\PC_n(\aa)$.
\end{proof}

\begin{proposition}\label{prop5a}
For $\P_n(\aa)$, $\PC_n(\aa)$, $\PI_n(\aa)$ and $\PO_n(\aa)$, we have
\begin{enumerate}
\item[$(1)$] $\PI_n(\aa)$ is open in $\P_n(\aa)$.
\item[$(2)$] $\PC_n(\aa)$ is a clopen subset of $\PI_n(\aa)$.
\item[$(3)$] $\PO_n(\aa)$ is a strong deformation retract of $\PC_n(\aa)$.
\item[$(4)$] $\PC_n(\aa)$ is locally connected. Thus every connected component of $\PC_n(\aa)$ is path--connected.
\item[$(5)$] $(P_1,\cdots,P_n),\, (P_1',\cdots,P_n')\in\PC_n(\aa)$ are in the same connected component iff there is
$D\in GL_0(\aa)$ such that $P_i=D^*P_i'D$, $i=1,\cdots,n$.
\end{enumerate}
\end{proposition}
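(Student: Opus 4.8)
\emph{Parts (1) and (2).} These are quick. The map $(P_1,\dots,P_n)\mapsto\sum_{i=1}^nP_i$ is norm--continuous from $\P_n(\aa)$ into $\aa$ and $GL(\aa)$ is open, so $\PI_n(\aa)$, being the preimage of $GL(\aa)$, is open in $\P_n(\aa)$; this is (1). For (2), $\PC_n(\aa)\subseteq\PI_n(\aa)$ by Theorem \ref{th1}. On $\PI_n(\aa)$ the element $A=\sum_iP_i$, and hence $A^{-1}$, depends norm--continuously on the tuple, and a tuple of $\PI_n(\aa)$ lies in $\PC_n(\aa)$ precisely when $P_iA^{-1}P_j=0$ for all $i\ne j$ (Theorem \ref{th1}); this closed condition makes $\PC_n(\aa)$ closed in $\PI_n(\aa)$, while Corollary \ref{coro4b} shows it is also open there, so $\PC_n(\aa)$ is clopen in $\PI_n(\aa)$.

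\emph{Part (3).} The plan is to promote the homotopy of Lemma \ref{lem5b} to a deformation retraction by letting the implementing element depend on the tuple. For $x=(P_1,\dots,P_n)\in\PC_n(\aa)$ set $A=\sum_iP_i$ (positive and invertible, by Theorem \ref{th1}); for $t\in[0,1]$ let $E_i(t)=A^{-t/2}P_iA^{t/2}$, an idempotent, and let $P_i(t)=E_i(t)(E_i(t)+E_i(t)^*-1)^{-1}$ be its associated projection (Lemma \ref{lem3b}); put $R(x,t)=(P_1(t),\dots,P_n(t))$. Running the argument of Lemma \ref{lem5b} with $C=A^{-1/2}$ shows $R(x,t)\in\PC_n(\aa)$ for every $t$, $R(x,0)=x$, and $R(x,1)=(A^{-1/2}P_1A^{-1/2},\dots,A^{-1/2}P_nA^{-1/2})\in\PO_n(\aa)$ (note $\PO_n(\aa)\subseteq\PC_n(\aa)$). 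Joint continuity of $R$ comes from $A^{t/2}=\exp(\tfrac t2\log A)$, which is continuous in $(A,t)$ on the positive invertibles, together with continuity of inversion and of the maps in Lemma \ref{lem3b}; and if $x\in\PO_n(\aa)$ then $A=1$, so $P_i(t)=P_i$ for all $t$ and $R$ fixes $\PO_n(\aa)$. Hence $R$ is a strong deformation retraction of $\PC_n(\aa)$ onto $\PO_n(\aa)$.

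\emph{Part (4), and the direction ``$\Leftarrow$'' of (5).} For (4) it is enough that $\PC_n(\aa)$ be locally path--connected, for then its path components are clopen, hence coincide with its connected components, which are thus open and path--connected. By (1) and (2), $\PC_n(\aa)$ is open in $\P_n(\aa)$, and $\P_n(\aa)$ is an open subset of the $n$--fold product of the space of all projections of $\aa$, which is locally path--connected: if $\|P-Q\|<\tfrac12$ then $v=QP+(1-Q)(1-P)\in GL(\aa)$, $vPv^{-1}=Q$, $\|v-1\|<1$, and $s\mapsto$ [the associated projection of the idempotent $(1+s(v-1))P(1+s(v-1))^{-1}$] is a short path of projections from $P$ to $Q$. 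For ``$\Leftarrow$'' of (5): if $P_i=D^*P_i'D$ with $D\in GL_0(\aa)$, then $P_i'DD^*P_i'=P_i'$ automatically, so by Theorem \ref{th3} (with the two tuples interchanged) there is $(S_1,\dots,S_n)\in\PC_n(\aa)$ with $(P_1',\dots,P_n')\sim_u(S_1,\dots,S_n)\sim_h(P_1,\dots,P_n)$, the $\sim_u$ being implemented by $U=(DD^*)^{-1/2}D$. Since $D\in GL_0(\aa)$ and $(DD^*)^{-1/2}\in GL_0(\aa)$, we get $U\in U(\aa)\cap GL_0(\aa)=U_0(\aa)$ (use the polar--decomposition retraction $GL(\aa)\to U(\aa)$), and $U_0(\aa)$ is path--connected, so here $\sim_u$ refines to $\sim_h$; concatenating with the Lemma \ref{lem5b} homotopy gives $(P_1,\dots,P_n)\sim_h(P_1',\dots,P_n')$, so the two tuples lie in one component.

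\emph{Direction ``$\Rightarrow$'' of (5), and the main obstacle.} I would first prove a local statement: for each $(P_1,\dots,P_n)\in\PC_n(\aa)$ there is $\varepsilon>0$ such that every $(P_1',\dots,P_n')\in\P_n(\aa)$ with $\max_i\|P_i-P_i'\|<\varepsilon$ lies in $\PC_n(\aa)$ (Corollary \ref{coro4b}) and satisfies $P_i'=D^*P_iD$ for some $D\in GL_0(\aa)$. To build $D$, set $A=\sum_iP_i$, $A'=\sum_iP_i'$ and $Q_i=A^{-1/2}P_iA^{-1/2}$, $Q_i'=A'^{-1/2}P_i'A'^{-1/2}$, two tuples of mutually orthogonal projections summing to $1$ that are close for small $\varepsilon$; then $v=\sum_iQ_i'Q_i$ has $v^*v=\sum_iQ_iQ_i'Q_i$ commuting with each $Q_j$ and close to $1$, so $u=v(v^*v)^{-1/2}$ is unitary, close to $1$, hence in $U_0(\aa)\subseteq GL_0(\aa)$, with $uQ_iu^*=Q_i'$; thus $D=A^{-1/2}u^*A'^{1/2}\in GL_0(\aa)$ gives $D^*P_iD=A'^{1/2}Q_i'A'^{1/2}=P_i'$. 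Granting this, if the two tuples lie in the same component then by (4) they are joined by a path $F\colon[0,1]\to\PC_n(\aa)$; covering $[0,1]$ by finitely many of the neighbourhoods above and choosing $0=t_0<\dots<t_m=1$ with $F(t_{k-1}),F(t_k)$ in a common one, one gets $D_k\in GL_0(\aa)$ with $F(t_k)_i=D_k^*F(t_{k-1})_iD_k$, and $D=D_1\cdots D_m\in GL_0(\aa)$ satisfies $P_i'=D^*P_iD$ (replace $D$ by $D^{-1}$ for the stated form). The delicate points are exactly this local production of $D\in GL_0(\aa)$ via the polar--decomposition trick (keeping the auxiliary unitary close to $1$, hence in $U_0(\aa)$) and the compactness argument that patches the local pieces together while staying inside the group $GL_0(\aa)$; parts (1)--(3) and ``$\Leftarrow$'' of (5) are comparatively routine once Theorem \ref{th1}, Corollary \ref{coro4b}, Theorem \ref{th3} and Lemma \ref{lem5b} are in hand.
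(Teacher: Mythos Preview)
Your proof is correct, and for parts (1)--(3) it matches the paper's argument almost verbatim (the paper packages (2) via a single continuous function built from Theorem~\ref{th2}, but your closed/open split using Theorem~\ref{th1}(7) and Corollary~\ref{coro4b} is equivalent). The genuine differences lie in (4) and in the forward direction of (5).

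For (4), the paper works inside a fixed small ball of $\PC_n(\aa)$: given a nearby tuple $(R_1,\dots,R_n)$ it forms the self--adjoint segments $a_i(t)=(1-t)P_i+tR_i$ in $C([0,1],\aa)$ and invokes a functional--calculus lemma to extract projections $f_i\in C^*(a_i)$ with $\|P_i-f_i\|<\delta$, so that $t\mapsto(f_1(t),\dots,f_n(t))$ is a path inside $\PC_n(\aa)$. Your argument is cleaner and more structural: you simply observe that $\PC_n(\aa)$ is open in the product of projection spaces, which is locally path--connected by the standard $v=QP+(1-Q)(1-P)$ trick, and local path--connectedness passes to open subsets. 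This avoids the external citation and gives the same conclusion.

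For ``$\Rightarrow$'' of (5), the paper takes a global route: given a path $P(t)$ it quotes a result from \cite{O} producing continuous unitary paths $U_i(t)$ with $U_i(0)=1$ and $P_i(t)=U_i(t)P_iU_i^*(t)$, then writes down an explicit continuous $D(t)$ (via the formula already appearing in the proof of Theorem~\ref{th3}) with $D(0)=1$ and $D(t)^*P_iD(t)=P_i(t)$, so $D=D(1)\in GL_0(\aa)$ directly. Your local--plus--compactness argument is a legitimate alternative: the key computation that $v=\sum_iQ_i'Q_i$ has $v^*v=\sum_iQ_iQ_i'Q_i$ block--diagonal, hence $u=v(v^*v)^{-1/2}$ is unitary with $uQ_iu^*=Q_i'$, is correct (since $v$ is invertible near $1$, $uu^*=v(v^*v)^{-1}v^*=1$, and $uQ_iu^*=Q_i'Q_i(Q_iQ_i'Q_i)^{-1}Q_iQ_i'$ is the range projection $Q_i'$ of $Q_i'Q_i$). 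Your approach is more self--contained---it does not need the external reference---while the paper's gives a single closed formula for $D$ rather than a product of local pieces.
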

\begin{proof} (1) Since $h(P_1,\cdots,P_n)=\sum\limits^n_{i=1}P_i$ is a continuous mapping from
$\P_n(\aa)$ into $\aa$ and $GL(\aa)$ is open in $\aa$, it follows that $\PI_n(\aa)=h^{-1}(GL(\aa))$ is open
in $\P_n(\aa)$.

  (2) Define $F\colon\PI_n(\aa)\rightarrow\mathbb R$ by
$$
F(P_1,\cdots,P_n)=\sum_{1\le i<j\le n}(n-1)\big\|\sum^n_{i=1}P_i\big\|^2
\big\|\big(\sum^n_{i=1}P_i\big)^{-1}\big\|\big\|P_i\big(\sum^n_{i=1}P_i\big)^{-1}P_j\|.
$$
Clearly, $F$  is continuous on $\PI_n(\aa)$. By means of Theorem \ref{th2}, we get that
$\PC_n(\aa)=F^{-1}((-1,1))$ is open in $\PI_n(\aa)$ and $\PC_n(\aa)=F^{-1}(\{0\})$ is closed in $\PI_n(\aa)$.

(3) Define the continuous mapping $r\colon\PC_n(\aa)\rightarrow\PO_n(\aa)$ by
$$
r\pn=\big(A^{-1/2}P_1A^{-1/2},\cdots,A^{-1/2}P_nA^{-1/2}\big),\quad A=\sum^n_{i=1}P_i.
$$
by Theorem \ref{th1}. Clearly, $r\pn=\pn$ when $\pn\in\PO_n(\aa)$. This means that $\PO_n(\aa)$ is a retract of
$\PC_n(\aa)$.

For any $t\in [0,1]$ and $i=1,\cdots,n$, put
$$
H_i(P_1,\cdots,P_n,t)=A^{-t/2}P_iA^{t/2}(A^{-t/2}P_iA^{t/2}+A^{t/2}P_iA^{-t/2}-1)^{-1}.
$$
Similar to the proof of Lemma \ref{lem5b}, we have
$$
H(P_1,\cdots,P_n,t)=(H_1(P_1,\cdots,P_n,t),\cdots,H_n(P_1,\cdots,P_n,t))
$$
is a continuous mapping from $\PC_n(\aa)\times [0,1]$ to $\PC_n(\aa)$ with $H(P_1,\cdots,P_n,0)=(P_1,\cdots,P_n)$
and $H(P_1,\cdots,P_n,1)=r(P_1,\cdots,P_n)$. Furthermore, when $(P_1,\cdots,P_n)$ $\in\PO_n(\aa)$, $A=1$. In this case,
$H(P_1,\cdots,P_n,t)=(P_1,\cdots,P_n)$, $\forall\,t\in [0,1]$. Therefore, $\PO_n(\aa)$ is a strong deformation retract
of $\PC_n(\aa)$.

(4) Let $\pn\in\PC_n(\aa)$. Then by Corollary \ref{coro4b}, there is $\de\in (0,1/2)$ such that for any $(R_1,\cdots,R_n)\in
\P_n(\aa)$ with $\|P_i-R_i\|<\delta$, $1\le i\le n$, we have $(R_1,\cdots,R_n)\in\PC_n(\aa)$.

Let $(R_1,\cdots,R_n)\in\PC_n(\aa)$ with $\|P_j-R_j\|<\delta$, $i=1,\cdots,n$. put $P_i(t)=P_i$, $R_i(t)=R_i$ and
$a_i(t)=(1-t)P_i+tR_i$, $\forall\,t\in [0,1]$, $i=1,\cdots,n$. Then $P_i,R_i,a_i$ are self--adjoint elements in
$C([0,1],\aa)=\mathcal B$ and $\|P_i-a_i\|=\max\limits_{t\in [0,1]}\|P_i-a_i(t)\|<\delta$, $i=1,\cdots,n$.
It follows from \cite[Lemm 6.5.9 (1)]{Xue} that there exists a projection $f_i\in C^*(a_i)$ (the $C^*$--subalgebra of
$\mathcal B$ generated by $a_i$) such that $\|P_i-f_i\|\le\|P_i-a_i\|<\delta$, $i=1,\cdots,n$. Thus, $\|P_i-f_i(t)\|
<\delta$, $i=1,\cdots,n$ and consequently, $F(t)=(f_1(t),\cdots,f_n(t))$ is a continuous mapping of $[0,1]$ into
$\PC_n(\aa)$. Since $a_i(0)=P_i$, $a_i(1)=R_i$ and $f_i(t)\in C^*(a_i(t))$, $\forall\,t\in [0,1]$, we have
$f(0)=(P_1,\cdots,P_n)$ and $f(1)=(R_1,\cdots,R_n)$. This means that $\PC_n(\aa)$ is locally path--connected.

(5) There is a continuous path $P(t)=(P_1(t),\cdots,P_n(t))\in\PC_n(\aa)$, $\forall\,t\in [0,1]$ such that $P(0)=
(P_1,\cdots,P_n)$ and $P(1)=(P_1',\cdots,P_n')$. By \cite[Corollary 5.2.9.]{O}, there is a continuous mapping
$t\mapsto U_i(t)$ of $[0,1]$ into $U(\aa)$ with $U_i(0)=1$ such that $P_i(t)=U_i(t)P_1U_i^*(t)$, $\forall\,t\in [0,1]$
and $i=1,\cdots,n$. Set
$$
W(t)=\Big(\sum\limits^n_{i=1}P_i\Big)^{-1/2}\Big(\sum\limits^n_{i=1}P_iU_i^*(t)P_i(t)\Big)
\Big(\sum\limits^n_{i=1}U_i(t)P_iU_i^*(t)\Big)^{-1/2}
$$
and $D(t)=\Big(\sum\limits^n_{i=1}P_i\Big)^{-1/2}W(t)\Big(\sum\limits^n_{i=1}U_i(t)P_iU_i^*(t)\Big)^{1/2}$, $\forall\,
t\in [0,1]$. Then $W(t)\in U(\aa)$ with $W(0)=1$, $D(t)\in GL(\aa)$ with $D(0)=1$ and $W(t)$, $D(t)$ are all continuous
on $[0,1]$ with $D^*(t)P_iD(t)=P_i(t)$ (see the proof of (1)$\Rightarrow$(2) in Theorem \ref{th3}), $\forall\,t\in [0,1]$
and $i=1,\cdots,n$. Put $D=D(1)$. Then $D\in GL_0(\aa)$ and $D^*P_iD=P_i'$, $i=1,\cdots,n$.

Conversely, if there is $D\in GL_0(\aa)$ such that $D^*P_iD=P_i'$, $i=1,\cdots,n$. Then $U=(DD^*)^{-1/2}D\in U_0(\aa)$
and $P_iDD^*P_i=P_i$, $UP_i'U^*=(DD^*)^{1/2}P_i(DD^*)^{1/2}$, $i=1,\cdots,n$. Thus,
$(P_1',\cdots,P_n')\sim_h(UP_1'U^*,\cdots,UP_n'U^*)$ and
$$
((DD^*)^{1/2}P_1(DD^*)^{1/2},\cdots,(DD^*)^{1/2}P_n(DD^*)^{1/2})\sim_h(P_1,\cdots,P_n)
$$
by Lemma \ref{lem5b}. Consequently, $(P_1',\cdots,P_n')\sim_h(P_1,\cdots,P_n)$.
\end{proof}

As ending of this section, we consider following examples:

\begin{example}
{\rm Let $\aa=\m_k(\mathbb C)$, $k\ge 2$. Define a mapping $\rho\colon\PC_n(\aa)\rightarrow\mathbb N^{n-1}$ by
$\rho(P_1,\cdots,P_n)=(\Tr(P_1),\cdots,\Tr(P_{n-1}))$, where $2\le n\le k$ and $\Tr(\cdot)$ is the canonical trace on
$\aa$.

By Theorem \ref{th1}, $(P_1,\cdots,P_n)\in\PC_n(\aa)$ means that $A=\sum\limits^n_{i=1}P_i\in GL(\aa)$ and
$(A^{-1/2}P_1A^{-1/2},\cdots,A^{-1/2}P_nA^{-1/2})\in\PO_n(\aa)$. Put $Q_i=A^{-1/2}P_iA^{-1/2}$, $i=1,\cdots,n$. Since
$(P_1,\cdots,P_n)\sim_h(Q_1,\cdots,Q_n)$ by Corollary \ref{coro5a}, it follows that $\Tr(P_i)=\Tr(Q_i)$, $i=1,\cdots,n$
and $\Tr(A)=k$. Thus $\Tr(P_n)=k-\sum\limits^{n-1}_{i=1}P_i$.

Note that $U(\aa)$ is path--connected. So, for
$(P_1,\cdots,P_n),\,(P_1',\cdots,P_n')\in\PC_n(\aa)$, $(P_1,\cdots,P_n)$ and $(P_1',\cdots,P_n')$ are in the same
connected component if and only if $\rho(P_1,\cdots,P_n)=\rho(P_1',\cdots,P_n')$.

The above shows that $\PC_k(\aa)$ is connected and $\PC_n(\aa)$ is not connected when $k\ge 3$ and $2\le n\le k-1$.
}
\end{example}

\begin{example}\label{exa4a}
{\rm Let $H$ be a separable complex Hilbert space and $\mathcal K(H)$ be the $C^*$--algebra of all compact operators
in $B(H)$. Let $\aa=B(H)/\mathcal K(H)$ be the Calkin algebra and $\pi\colon B(H)\rightarrow\aa$ be the quotient mapping.
Then $\PC_n(\aa)$ is path--connected.

In fact, if $(P_1,\cdots,P_n), (P_1',\cdots,P_n')\in\PC_n(\aa)$, then we can find $(Q_1,\cdots,Q_n)$, $(Q_1',\cdots,Q_n')
\in\PO_n(\aa)$ such that $(P_1,\cdots,P_n)\sim_h(Q_1,\cdots,Q_n)$ and $(P'_1,\cdots,P'_n)\sim_h(Q'_1,\cdots,Q'_n)$
by Corollary \ref{coro5a}. Since $B(H)$ is of real rank zero, it follows from  \cite[Corollary B.2.2]{Xue} or
\cite[Lemma 3.2]{X} that there are projections $R_1,\cdots,R_n$ and $R_1',\cdots,R_n'$ in $B(H)$ such that $\pi(R_i)=Q_i$,
$\pi(R_i')=Q_i'$, $i=1,\cdots,n$ and
$$
R_iR_j=\delta_{ij}R_i,\ R_i'R_j'=\delta_{ij}R_i',\ i,j=1,\cdots,n,\ \sum\limits^n_{i=1}R_i=\sum\limits^n_{i=1}R_i'=I.
$$
Note that $R_1,\cdots,R_n,R_1',\cdots,R_n'\not\in\mathcal K(H)$. So there are partial isometry operators
$V_1,\cdots,V_n$ in $B(H)$ such that $V_i^*V_i=R_i$, $V_iV_i^*=R_i'$, $i=1,\cdots,n$. Put $V=\sum\limits^n_{i=1}V_i$.
Then $V\in U(B(H))$ and $VR_iV^*=R_i'$, $i=1,\cdots,n$. Put $U=\pi(V)\in U(\aa)$. Then
$(UQ_1U^*,\cdots,UQ_nU^*)=(Q_1',\cdots,Q_n)$ in $\PO_n(\aa)$. Since $U(B(H))$ is
path--connected, we have $(Q_1,\cdots,Q_n)\sim_h(Q_1',\cdots,Q_n')$ in $\PC_n(\aa)$. Finally,
$(P_1,\cdots,P_n)\sim_h(P_1',\cdots,P_n')$. This means that $\PC_n(\aa)$ is path--connected.
}
\end{example}

\end{document}